\documentclass[
reqno,
11pt,
oneside
]{article}

\usepackage[ngerman, english]{babel}
\usepackage[utf8]{inputenc}
\usepackage[normalem]{ulem}
\usepackage[babel,french=guillemets,german=swiss]{csquotes}
\usepackage[center,font=small]{caption}
\usepackage{cite}
\usepackage{bbm}
\usepackage[raggedright]{titlesec}
\usepackage{xcolor}
\usepackage{enumerate}
\usepackage{setspace}

\titleformat{\section}{\normalfont\large\bfseries}{\thesection}{1em}{}
\titleformat{\subsection}{\normalfont\bfseries}{\thesubsection}{1em}{}

\usepackage{tocbasic}
\DeclareTOCStyleEntry[
beforeskip=.2em plus 1pt,
]{tocline}{section}

\usepackage[%
left=1.5in,
right=1.5in,
bottom=1in,
top=0.8in,
footskip=0.4in,
]{geometry}

\usepackage{color}
\definecolor{LinkColor}{rgb}{0,0,1}
\definecolor{LinkColor2}{rgb}{0,0.5,0}
\definecolor{lbcolor}{rgb}{0.85,0.85,0.85}
\definecolor{FrameColor}{rgb}{0.85,0.85,0.85}
\definecolor{rosso}{rgb}{0.8,0,0}
\definecolor{lightgray}{rgb}{0.5,0.5,0.5}
\definecolor{violet}{rgb}{0.65,0,0.65}
\definecolor{darkgreen}{rgb}{0,0.5,0}

\usepackage{enumitem}

\usepackage{graphicx}
\usepackage{placeins}
\usepackage{overpic}

\usepackage{amsmath}
\usepackage{amssymb}
\usepackage{dsfont}
\usepackage{empheq}
\usepackage{amsthm}

\usepackage[%
pdftitle={Titel},%
pdfauthor={Autor},%
pdfcreator={LaTeX, LaTeX with hyperref and KOMA-Script},
pdfsubject={Betreff}, 
pdfkeywords={Keywords}
]{hyperref} 

\hypersetup{%
	colorlinks	=true,
	linkcolor	=LinkColor,%
	anchorcolor	=LinkColor,%
	citecolor	=LinkColor2,%
	filecolor	=LinkColor,%
	menucolor	=LinkColor,%
	urlcolor	=LinkColor,%
}
\usepackage{marginnote}

\usepackage{verbatim}

\newtheorem{theorem}{Theorem}[section]
\newtheorem{lemma}[theorem]{Lemma}
\newtheorem{proposition}[theorem]{Proposition}
\newtheorem{corollary}[theorem]{Corollary}

\theoremstyle{definition}
\newtheorem{remark}[theorem]{Remark}

\makeatletter
\renewenvironment{proof}[1][\proofname]{%
	\par\pushQED{\qed}\normalfont%
	\topsep6\p@\@plus6\p@\relax
	\trivlist\item[\hskip\labelsep\bfseries#1\@addpunct{.}]%
	\ignorespaces
}{%
	\popQED\endtrivlist\@endpefalse
}
\makeatother

\makeatletter
\renewcommand\paragraph{\@startsection{paragraph}{4}{\z@}%
	{1ex \@plus1ex \@minus.2ex}%
	{-1em}%
	{\normalfont\normalsize\bfseries}}
\renewcommand\subparagraph{\@startsection{paragraph}{4}{\z@}%
	{1ex \@plus1ex \@minus.2ex}%
	{-1em}%
	{\normalfont\normalsize\itshape}}
\makeatother

\newcommand{\abs}[1]{\left| #1 \right|}
\newcommand{\norm}[1]{\| #1 \|}

\newcommand{\R}{\mathbb R}
\newcommand{\N}{\mathbb N}
\newcommand{\n}{\mathbf{n}}

\newcommand{\intO}{\int_\Omega}
\newcommand{\supp}{\textnormal{supp\,}}
\newcommand{\dist}{\textnormal{dist\,}}

\newcommand{\dx}{\;\mathrm{d}x}
\newcommand{\dy}{\;\mathrm{d}y}
\newcommand{\dhy}{\;\mathrm{d}\hat{y}}
\newcommand{\dhx}{\;\mathrm{d}\hat{x}}
\newcommand{\dz}{\;\mathrm{d}z}
\newcommand{\dt}{\;\mathrm dt}
\newcommand{\dw}{\;\mathrm dw}
\newcommand{\ds}{\;\mathrm ds}

\newcommand{\Div}{\operatorname{div}}

\newcommand{\wto}{\rightharpoonup}

\newcommand{\LL}{\mathcal{L}}

\newcommand{\eps}{\varepsilon}

\begin{document}
	



	
\title{\bfseries 
    Nonlocal-to-local $L^p$-convergence of \\ convolution operators with singular, anisotropic kernels 
}

\author{
    Helmut Abels \footnotemark[1]
     \and Christoph Hurm \footnotemark[1]
     \and Patrik Knopf \footnotemark[1] \footnotemark[2]
    }

\date{ }

\maketitle

\renewcommand{\thefootnote}{\fnsymbol{footnote}}

\footnotetext[1]{
    Faculty of Mathematics, 
    University of Regensburg, 
    D-93053 Regensburg, 
    Germany \newline
	\tt(%
        \href{mailto:helmut.abels@ur.de}{helmut.abels@ur.de},
        \href{mailto:christoph.hurm@ur.de}{christoph.hurm@ur.de},
        \href{mailto:patrik.knopf@ur.de}{patrik.knopf@ur.de}%
        ).
}

\footnotetext[2]{
    Institute for Analysis,
    Department of Mathematics,
    Karlsruhe Institute of Technology (KIT), 
    D-76128 Karlsruhe, 
    Germany \newline
	\tt(%
        \href{mailto:patrik.knopf@kit.edu}{patrik.knopf@kit.edu}%
        ).
}

\vspace{-2ex}
\begin{center}
	\scriptsize
	\color{white}
	{
		\textit{This is a preprint version of the paper. Please cite as:} \\  
		C.~Hurm
        \textit{[Journal]} \textbf{xx}:xx 000-000 (2026), \\
		\texttt{https://doi.org/\ldots}
	}
\end{center}
\vspace{-2ex}

\medskip


\begin{small}
\begin{center}
\begin{minipage}[h]{0.9\textwidth}
    \textbf{Abstract.}
    We study nonlocal convolution-type operators with singular, possibly anisotropic kernels. Our main objective is to establish and quantify their nonlocal-to-local convergence to a local differential operator with natural boundary conditions, as the kernels concentrate at the origin in a suitable way. Such convergence results provide a useful tool for the physical justification of mathematical models, particularly in situations where the desired local differential operator cannot be directly derived from microscopic laws. The present work substantially extends previous results by allowing kernels with stronger singularities (comparable to those of fractional Laplacians), anisotropic and non-localized kernels, and by proving strong convergence in general $L^p$ spaces together with explicit convergence rates.
\\[2ex]
\textbf{Keywords:} Nonlocal-to-local convergence, singular integral operators, nonlocal operators, anisotropic kernels, convergence rates, singular limits.
\\[2ex]	
\textbf{Mathematics Subject Classification:} 
Primary: 
47G10; 
Secondary:
35B40, 
35J25, 
47B38, 

\end{minipage}
\end{center}
\end{small}

\bigskip\bigskip


\begin{small}
\setcounter{tocdepth}{2}
\hypersetup{linkcolor=black}
\tableofcontents
\end{small}

\setlength\parskip{1ex}
\allowdisplaybreaks[0]
\numberwithin{equation}{section}
\renewcommand{\thefootnote}{\arabic{footnote}}

\newpage




\section{Introduction} 
\label{SECT:INTRO}

We consider nonlocal convolution type operators of the form
\begin{align}
    \label{DEF:L:BASIC}
    \begin{split}
    \LL^\Omega_\ast u(x) 
    &= \mathrm{P.V.} \intO J(x-y) \big( u(x) - u(y) \big) \dy 
    \\
    &= \underset{r\searrow 0}{\lim}\,\int_{\Omega\cap\{\abs{x-y}\ge r\}} J(x-y) \big( u(x) - u(y) \big) \dy
    \end{split}
\end{align}
for all $x\in\Omega$ and any suitable function $u:\Omega\rightarrow\R$,
where $\Omega\subseteq\R^n$ is a sufficiently smooth domain and 
\begin{equation}
    \label{DEF:J:INTRO}
    J:\R^n\setminus\{0\} \to \R, \quad J(x) = \frac{\rho(x)}{\abs{x}^2},
\end{equation}
is a prescribed interaction kernel.
Throughout this paper, the function $\rho:\R^n\to\R$ is supposed to be even and has to fulfill certain integrability conditions that will be specified in Section~\ref{SECT:PRELIM}. In particular, the convolution kernel $J$ is also even. Because of this symmetry, the nonlocal energy associated with $\LL^\Omega_\ast$ is given by
\begin{align*}
    \mathcal{E}_\ast^\Omega(u)
    &= \intO \LL^\Omega_\ast u(x) \; u(x) \dx
    = \frac 12 \intO\intO J(x-y) \big| u(x) - u(y) \big|^2 \dx
\end{align*}
for any suitable function $u:\Omega\rightarrow\R$.

\paragraph{Main objective of the paper.} 
The main goal of this paper is to study the \textit{nonlocal-to-local convergence} of nonlocal operators as introduced in \eqref{DEF:L:BASIC}. 
This means that we intend to show that such a nonlocal operator converges to a local differential operator in some suitable sense as the function $\rho$ approaches a multiple of the $\delta$-distribution. To formulate this in a mathematically precise way, we define the functions
\begin{alignat}{2}
    \label{DEF:RHOE:INTRO}
    &\rho_\eps:\R^n \to \R, 
    &&\quad \rho_\eps(x) \coloneqq \eps^{-n}\, \rho\big( \tfrac{x}{\eps} \big),
    \\
    \label{DEF:JE:INTRO}
    &J_\eps:\R^n\setminus\{0\} \to \R, 
    &&\quad J_\eps(x) = \frac{\rho(x)}{\abs{x}^2}
\end{alignat}  
for any $\eps> 0$. This means that $(\rho_\eps)_{\eps>0}$ is a \textit{Dirac sequence}, i.e., it approaches a multiple of the $\delta$-distribution in the limit $\eps\to 0$ (cf.~Lemma~\ref{LEM:DIRAC}). For any $\eps>0$, we write
\begin{align}
    \label{DEF:LE}
    \begin{split}
    \LL_\eps^\Omega u(x) \coloneqq \mathrm{P.V.} \intO J_\eps(x-y) \big( u(x) - u(y) \big) \dy 
    \end{split}
\end{align}
for all $x\in\Omega$, to denote the nonlocal operator corresponding to $\rho_\eps$.
We want to show that in the limit $\eps\to 0$, the nonlocal operator $\LL_\eps$ converges to a local differential operator $\LL$ of the type
\begin{align}
    \label{INTRO:DEF:LL}
    \LL^\Omega u(x) \coloneqq - \Div(M\nabla u) = - \sum_{i,j=1}^n M_{ij}\, \partial_{x_j}\partial_{x_k}u(x)
    \quad
    \text{for all $x\in \Omega$},
\end{align}
where $M\in\R^{n\times n}$ is a symmetric matrix with entries
\begin{align}\label{INTRO:DEF:M}
    M_{ij} \coloneqq \frac{1}{2} \int_{\R^n}J(z)\, z_i\, z_j\dz
    \quad\text{for all $i,j\in\{1,\ldots,n\}$}. 
\end{align}
$M$ is usually referred to as the \textit{momentum matrix}. In case $\Omega \subsetneq\R^n$, which means that $\partial\Omega\neq \emptyset$, we interpret the second-order differential operator $\LL^\Omega$ to be equipped with the natural boundary condition
\begin{equation}
    \label{INTRO:BC}
	M \nabla u \cdot \n_{\partial\Omega} = 0 \quad\text{on $\partial\Omega$}.
\end{equation}
Here, $\n_{\partial\Omega}$ denotes a unit normal vector field on $\partial\Omega$.
Under suitable assumptions on the domain $\Omega$ and on $\rho$, $J$ and $p$, we intend to show that
\begin{equation}
    \label{INTRO:CONV}
    \LL_\eps^{\Omega} u \to \LL^{\Omega} u 
    \quad\text{in $L^p(\Omega)$ as $\eps\to 0$}
\end{equation}
for all functions $u\in W^{2,p}(\Omega)$ with \eqref{INTRO:BC} if $\Omega \subsetneq\R^n$.
In addition, we are planning to establish the convergence rate
\begin{equation}
    \label{INTRO:CONV:RATE}
    \big\|\LL_\eps^{\Omega} u - \LL^{\Omega} u \big\|_{L^p(\Omega)} 
    \leq C_\Omega\, \sqrt[p]{\eps}\, \|u\|_{W^{3,p}(\Omega)}.
\end{equation}
for all functions $u\in W^{3,p}(\Omega)$ with \eqref{INTRO:BC} if $\Omega \subsetneq\R^n$.
Here, $C$ is a positive constant that depends only on $\Omega$, $\rho$ and $p$.

\paragraph{State of the art and related results.}
The nonlocal-to-local convergence of $\LL_\eps^{\Omega}$ to $\LL^{\Omega}$ was first established in the weak sense. In this case, it can be derived from the convergence of the corresponding energies. The energy functional associated with the nonlocal operator $\LL^\Omega_\eps$ is given by
\begin{align*}
    \mathcal{E}_\eps^\Omega(u)
    &= \intO \LL^\Omega_\eps u(x) \; u(x) \dx
    = \frac 12 \intO\intO J_\eps(x-y) \big| u(x) - u(y) \big|^2 \dx
    \\
    &= \frac 12 \intO\intO \rho_\eps(x-y) \left(\frac{| u(x) - u(y) |}{|x-y|}\right)^2 \dx.
\end{align*}
and the energy functional associated with the local differential operator $\LL^\Omega$ reads as
\begin{align*}
    \mathcal{E}^\Omega(u)
    = \frac 12 \intO M \nabla u(x) \cdot \nabla u(x) \dx.
\end{align*}
Given these energies, one may intuitively expect that the weighted squared difference quotient $\rho_\eps(x-y) \big(\tfrac{|u(x)-u(y)|}{|x-y|}\big)^2$ approaches an expression of the form $M \nabla u(x) \cdot \nabla u(x)$ as the kernel $\rho_\eps$ concentrates at the origin, since in this regime only points $y$ close to $x$ contribute significantly to the integral. Here, the matrix $M$ accounts for anisotropic effects when $\rho$ is even but not radially symmetric. In case $\rho$ is radially symmetric, we expect $M$ to be a scalar multiple of the identity matrix and thus, $\LL^\Omega$ is a scalar multiple of the negative Laplacian.

A rigorous proof of the nonlocal-to-local convergence of such energies in a more general setting was provided by \textsc{Ponce} \cite{Ponce2004Gamma} (see also \cite{Ponce2004Poincare,Ponce2003Poincare}). 
These investigations of nonlocal energies provide an extension of earlier works by \textsc{Bourgain}, \textsc{Brezis} and \textsc{Mironescu} \cite{Bourgain2001}. Some improvements were obtained later, for example, by \textsc{Brezis} and \textsc{Nguyen} \cite{Brezis2016}. 
In particular, it was shown in \cite{Ponce2004Gamma} that
\begin{align}
    \label{CONV:EN:PTW}
    \mathcal{E}_\eps^\Omega(u) \to \mathcal{E}^\Omega(u)
    \quad\text{as $\eps\to 0$ for all $u\in H^1(\Omega)$.}
\end{align}
Moreover, also the $\Gamma$ convergence of such energies was discussed in \cite{Ponce2004Gamma}. Further results on nonlocal-to-local $\Gamma$-convergence were found by \textsc{Brezis} and \textsc{Nguyen} \cite{Brezis2018,Brezis2020}.

\pagebreak[3]

From the pointwise convergence \eqref{CONV:EN:PTW}, the weak convergence
\begin{equation}
    \label{CONV:WEAK:L2}
    \LL_\eps^{\Omega} u \wto \LL^{\Omega} u 
    \quad\text{weakly in $H^1(\Omega)'$ as $\eps\to 0$ for all $u\in H^1(\Omega)$}
\end{equation}
was derived by \textsc{Melchoinna}, \textsc{Ranetbauer}, \textsc{Scarpa} and \textsc{Trussardi} \cite{Melchionna2019} for radially symmetric, sufficiently regular kernels and $\Omega$ being a flat torus. An analogous result for bounded domains $\Omega\subset\R^n$ and radially symmetric kernels $J_\eps\in W^{1,1}_\mathrm{loc}(\R^n)$ was established by \textsc{Davoli}, \textsc{Scarpa} and \textsc{Trussardi} \cite{Davoli2021}. The main goal of these works was to relate the \textit{nonlocal Cahn--Hilliard equation} to the classical \textit{(local) Cahn--Hilliard equation} by a nonlocal-to-local limit. 
This is an important advance because, as shown by \textsc{Giacomin} and \textsc{Lebowitz} \cite{Giacomin1996} (see also \cite{Giacomin1997,Giacomin1998}), the local Cahn--Hilliard equation, in contrast to its nonlocal counterpart, cannot be derived directly from microscopic physical principles. 
In this sense, the nonlocal-to-local convergence offers a physical justification for the local Cahn--Hilliard equation. 
Therefore, besides \cite{Melchionna2019,Davoli2021}, the weak convergence \eqref{CONV:WEAK:L2} for radially symmetric $W^{1,1}_\mathrm{loc}$-kernels has been applied to variants of the Cahn--Hilliard equation and related models in various settings. We refer, for example, to \cite{AbelsTerasawa2022,Carrillo2025,Colli2024,Davoli2021convective,Elbar2023,Elbar2025,Kurima}. 

However, the aforementioned results are merely qualitative and do not yield any quantitative information on the nonlocal-to-local convergence, such as convergence rates. Also, at least for more regular functions $u$, it may be expected that the convergence $\LL_\eps^{\Omega} u \to \LL^{\Omega} u$ actually holds in a strong topology. A first step in this direction was achieved in \cite{AbelsHurm2024} by the first two authors of the present paper. For radially symmetric, localized (i.e., compactly supported) $W^{1,1}$-kernels and a sufficiently regular bounded domain $\Omega$, it was shown in \cite{AbelsHurm2024} that
\begin{align}
    \label{CONV:STRONG:L2}
    \LL_\eps^{\Omega} u \to \LL^{\Omega} u 
    \quad\text{strongly in $L^2(\Omega)$ as $\eps\to 0$}
\end{align}
 for all $u\in H^2(\Omega)$ with $\nabla u\cdot \n_{\partial\Omega} = 0$ on $\partial\Omega$, and
\begin{align}
    \label{CONV:RATE:L2}
    \norm{\LL_\eps^{\Omega} u - \LL^{\Omega} u }_{L^2(\Omega)} \le C \eps^\gamma \norm{u}_{H^{3}(\Omega)}
\end{align}
for all $u\in H^3(\Omega)$ with $\nabla u\cdot \n_{\partial\Omega} = 0$ on $\partial\Omega$.
Here, $C$ denotes a positive constant that may depend on $\Omega$ but is independent of $\eps$ and $u$. 
The exponent $\gamma$ is given by $\gamma=1$ if $\Omega=\R^n$ or $\Omega=\mathbb T^n$ (a flat torus), and $\gamma=\tfrac 12$ if $\Omega$ is a sufficiently regular bounded domain.
In \cite{AbelsHurm2024}, these convergence results were also applied to prove nonlocal-to-local convergence of the Cahn--Hilliard equation and the Allen--Cahn equation. In subsequent works, this technique was applied in \cite{AbelsHurmMoser2025,Hurm2024,HurmMoser2025,DiPrimioHurm} to analyze further models related to the Cahn–Hilliard equation. 

Furthermore, we remark that the kernels in the nonlocal-to-local convergence results mentioned so far were always assumed to be radially symmetric. However, in concrete applications, this might not always be the case. For example, models related to crystal growth usually involve anisotropic interactions. We point out that in the contributions \cite{Giacomin1996,Giacomin1997,Giacomin1998} by \textsc{Giacomin} and \textsc{Lebowitz}, the interaction kernel was merely assumed to be even, but not radially symmetric. Also in the work \cite{Ponce2004Gamma} by \textsc{Ponce} on the convergence of nonlocal energies, radial symmetry of the kernel is not required. This leads to the conjecture that it should be possible to generalize the aforementioned nonlocal-to-local convergence results to anisotropic kernels.
A first step in this direction was achieved very recently by the first author of the present paper and \textsc{Terasawa} \cite{AbelsTerasawa2025} by establishing the weak nonlocal-to-local convergence for an anisotropic variant of the Cahn--Hilliard equation. It is also worth mentioning that the strong $W^{1,1}_\mathrm{loc}$-regularity assumption on the convolution kernel was also relieved there. Instead, weaker assumptions similar to those by \textsc{Gounoue}, \textsc{Kassmann} and \textsc{Voigt} \cite{Foghem2020} were imposed.

In the context of our work, we also want to mention the recent contribution \cite{BUNGERT2026114173} by \textsc{Bungert} and \textsc{del Teso}, where nonlocal-to-local convergence rates in the $W^{2,s}(\R^n)$-norm were established for the the Dirichlet problem to the fractional Laplacian $(-\Delta)^s$ as $s\rightarrow 1$. For a discussion of nonlocal Neumann problems in a broader sense with general measurable, nonnegative convolution kernels, we refer to the work \cite{Frerick2025} by \textsc{Frerick}, \textsc{Vollmann} and \textsc{Vu}.

\paragraph{Novelties and main results of our paper.}

In the present paper, we intend to provide a generalization of the aforementioned results on nonlocal-to-local convergence.
Under suitable assumptions, we will show the convergences
\begin{align}
    \label{CONV:STRONG:LP}
    \,\LL_\eps^{\Omega} u \to \LL^{\Omega} u 
    \quad\text{strongly in $L^p(\Omega)$ as $\eps\to 0$}
\end{align}
for all $u\in W^{2,p}(\Omega)$ with $M\nabla u\cdot \n_{\partial\Omega} = 0$ on $\partial\Omega$, and
\begin{align}
    \label{CONV:RATE:LP}
    \norm{\LL_\eps^{\Omega} u - \LL^{\Omega} u }_{L^p(\Omega)} \le C \eps^{\gamma(p)} \norm{u}_{W^{3,p}(\Omega)}
\end{align}
for all $u\in W^{3,p}(\Omega)$ with $M\nabla u\cdot \n_{\partial\Omega} = 0$ on $\partial\Omega$.
Here, $p \in [1,\infty)$ and
\begin{align*}
    \gamma(p) =
    \begin{cases}
        1 &\text{if $\Omega = \R^n$,}
        \\
        \frac{1}{p} &\text{if $\Omega\subsetneq \R^n$ is a sufficiently regular domain.}
    \end{cases}
\end{align*}

Compared with the previous results in \cite{AbelsHurm2024}, our new results provide the following improvements:
\begin{enumerate}[label=\textbf{(\roman*)}, topsep=0.5ex, itemsep=0.25ex, leftmargin=*]
    \item \label{IMP:1} The domain $\Omega$ does not need to be bounded, but we merely require $\Omega$ to have a sufficiently regular, compact boundary. This means that exterior domains (e.g., $\R^n\setminus B_1(0)$) can also be considered.

    \item \label{IMP:2} The strong nonlocal-to-local convergence is established in a general $L^p$ framework. This enhances the applicability of the results, as in concrete applications one may need convergence in an $L^p$ space other than $L^2$, or the relevant function $u$ may lie only in $W^{k,p}$ instead of $H^k$ for $k\in\{2,3\}$.
    Note that the rate obtained in \eqref{CONV:RATE:LP} is consistent with the one from \cite{AbelsHurm2024} (cf.~\eqref{CONV:RATE:L2}) for $p=2$.

    \item \label{IMP:3} We demand significantly weaker assumptions on $\rho$ and $J$, see \ref{ASS:RHO} and \ref{ASS:J}. 
    In particular, $W^{1,1}_\mathrm{loc}$-regularity of $J_\eps$ is not required, 
    and we allow for a much stronger singularity at the origin, comparable to that of a fractional Laplacian $(-\Delta)^s$ of order $s\in (0,1)$ (cf.~Remark~\ref{REM:FRACLAP}).

    \item \label{IMP:4} As in \cite{AbelsTerasawa2025}, we also do not require $\rho$ to be radially symmetric. This allows for anisotropic effects, since different directions may be weighted differently, resulting in a non-diagonal momentum matrix $M$. 
    Compared to \cite{AbelsTerasawa2025}, our advance is that we establish the nonlocal-to-local convergence in the strong sense as well as associated convergence rates.
\end{enumerate}
Especially \ref{IMP:3} and \ref{IMP:4} give rise to considerable technical difficulties in our proofs.

\paragraph{Structure of the paper.}
In Section~\ref{SECT:PRELIM}, we first introduce our fundamental assumptions on the convolution kernel. We also give a concrete example of class of kernels, which satisfy these assumptions. Furthermore, we explain in Remark~\ref{REM:FRACLAP}, that our assumptions allow the kernels to exhibit singularities at the origin that are comparable to those of fractional Laplacians.

Our main results are successively stated and proved in Section~\ref{SECT:MAIN}. In Subsection~\ref{SUBSEC:WELLDEF}, we first show that the nonlocal operator \eqref{DEF:LE} is actually well defined under our assumptions on the convolution kernel. Then in Subsection~\ref{SUBSEC:FULLSPACE}, we establish our main result in the case $\Omega=\R^n$.
Afterwards, we prove the convergences \eqref{CONV:STRONG:LP} and \eqref{CONV:RATE:LP} in the special case that $\Omega$ is a sufficiently regular curved half-space. Finally, in Subsection~\ref{SUBSEC:DOMAIN}, this allows us to establish our main result in the general case that $\Omega\subset\R^n$ is a domain with sufficiently regular, compact boundary.




\section{Assumptions and preliminaries} \label{SECT:PRELIM}

In the remainder of this paper, let the dimension $n\in\N$ be arbitrary. We make the following general assumptions on the convolution kernel.

\begin{enumerate}[label=\textnormal{\bfseries (A\arabic*)}, topsep=0ex, leftmargin=*]
    \item\label{ASS:RHO} 
    We assume that $\rho:\R^n\to [0,\infty)$, $\rho\not\equiv 0$ is measurable and even (i.e., $\rho(x) = \rho(-x)$ for almost all $x\in\R^n$) with
    \begin{align}
        \label{COND:RHO:1}
        \int_{\R^n} \rho(x) (1+\abs{x}) \dx <\infty.
    \end{align}
    The associated kernel is given by
    \begin{equation}
        \label{DEF:J}
        J:\R^n\setminus\{0\} \to \R, \quad J(x) = \frac{\rho(x)}{\abs{x}^2}.
    \end{equation}
    For any $\eps>0$, we introduce the functions
    \begin{alignat}{2}
    	\label{DEF:RHOE}
    	&\rho_\eps:\R^n \to \R, 
    	&&\quad \rho_\eps(x) \coloneqq \eps^{-n}\, \rho\big( \tfrac{x}{\eps} \big),
    	\\
    	\label{DEF:JE}
    	&J_\eps:\R^n\setminus\{0\} \to \R, 
    	&&\quad J_\eps(x) = \frac{\rho_\eps(x)}{\abs{x}^2}.
    \end{alignat}  
    
    Since $\rho\geq 0$ and $\rho\not\equiv 0$, the momentum matrix 
    \begin{align}\label{DEF:M}
    	M \coloneqq
    	\frac{1}{2} \int_{\R^n} J(z)\, z\otimes z \dz
    \end{align}
    is positive definite.

    \item\label{ASS:J} In addition to Assumption~\ref{ASS:RHO}, we assume that $\rho\in C^1(\R^n\setminus\{0\})$ and that there exist $c_0,c_1>0$, $\alpha \in (0,2)$ with $N>3-\alpha$ and
    \begin{equation}
        \label{COND:RHO:2}
        \begin{split}
        |\rho(x)| &\le c_0  |x|^{2-\alpha-n}\, (1+|x|)^{-N}
        \\
        |\nabla\rho(x)| &\le c_1   |x|^{1-\alpha-n}\, (1+|x|)^{-N}
        \end{split}
    \end{equation}
    for all $x\in B_1(0)\setminus\{0\}$.
    Moreover, we assume that the associated kernel $J$ fulfills the conditions
    \begin{align}
    	\label{COND:J:2}
    	&\int_{\R^{n-1}} J\left(AQ
    	\begin{pmatrix}
    		z' \\ z_n
    	\end{pmatrix}\right) z' \,\mathrm{d}z' = 0
    	\quad\text{for all $z_n\in\R$ and all $Q\in \mathrm{SO}(n)$,}
    \end{align}
    where
    \begin{equation}
    	\label{DEF:A}
    	A \coloneqq \sqrt{M}.
    \end{equation}
\end{enumerate}

\bigskip

\begin{remark} \label{REM:ASS} \normalfont
    We make the following observations.
    \begin{enumerate}[label=\textnormal{(\alph*)}, topsep=0ex, leftmargin=*]
        \item \label{REM:ASS:1} If $\rho\in C^1(\R^n\setminus\{0\})$ with compact support in $\R^n$, it suffices to demand
        \begin{equation}
            \label{COND:RHO:2:C}
            \begin{split}
            |\rho(x)| &\le c_0  |x|^{2-\alpha-n}
            \\
            |\nabla\rho(x)| &\le c_1   |x|^{1-\alpha-n}
            \end{split}
        \end{equation}
        instead of \eqref{COND:RHO:2}. In this situation, \eqref{COND:RHO:2:C} implies that \eqref{COND:RHO:2} holds for every $N>0$, provided that $c_0$ and $c_1$ are chosen sufficiently large.
        
        \item \label{REM:ASS:2} If $\rho$ is radially symmetric, so is $J$. In this case, condition \eqref{COND:J:2} is automatically fulfilled. This is because $A = a\mathrm{I}$ for some $a>0$, where $\mathrm{I}$ denotes the identity matrix, which implies that
        \begin{align*}
            \int_{\R^{n-1}} J(AQz) z' \,\mathrm{d}z'
            = \int_{\R^{n-1}} J(az) z' \,\mathrm{d}z'
            = 0.
        \end{align*}
        Here, the second equality can be shown by applying the change of variables $z' \mapsto -z'$.
    \end{enumerate}
\end{remark}

\medskip

\begin{remark} \label{REM:FRACLAP} \normalfont
Recall that, in case $\Omega=\R^n$, the \textit{fractional Laplace operator} of order $s\in (0,1)$ can be represented as
\begin{align*}
    (-\Delta)_\Omega^{s} u(x) = \frac{4^s \Gamma\left(\tfrac n2 + s\right)}{\pi^{\frac n2} \abs{\Gamma(-s)} } \; \mathrm{P.V.}\!\int_{\Omega} \frac{\rho(x-y)}{|x-y|^2} \big(u(x) - u(y)) \dy,
\end{align*}
with
\begin{align}
    \label{DEF:RHO:FRAC}
    \rho(x) = |x|^{2-n-2s}
\end{align}
for all $x\in\R^n$, where $\Gamma$ denotes the Gamma function. If $\Omega \subsetneq \R^n$, the above operator is referred to as a \textit{regional fractional Laplace operator} of order $s$.
We notice that the singularity of $\rho$ at the origin is as demanded in \eqref{COND:RHO:2} with $\alpha = 2s$.
However, $\rho$ needs to be modified for large values of $|x|$ in a suitable way in order to satisfy \eqref{COND:RHO:1} and \eqref{COND:RHO:2}.
In case $\Omega$ is a bounded domain, such a modification for large $|x|$ does not constitute a true restriction. If $\Omega$ is bounded, we can find a radius $R>0$ such that $\Omega \subset B_R(0)$. This means that for any $x\in\Omega$, $(-\Delta)_\Omega^{s} u(x)$ does not depend on the values of $\rho$ on $\R^n\setminus B_{2R}(0)$. Therefore, we can simply replace $\rho$ by
\begin{align*}
    \tilde{\rho}(x) = \rho(x)\chi(x) = |x|^{2-n-2s} \chi(x),
\end{align*}
where $\chi\in C^\infty_c(\R^n)$ with $\chi=1$ on $B_{2R}(0)$, without changing the values of $(-\Delta)_\Omega^{s} u$ in $\Omega$. It is straightforward to check that $\tilde\rho$ satisfies \ref{ASS:RHO} and \ref{ASS:J} with $\alpha=2s$ and any $N>3-\alpha = 3-2s$. 
\end{remark}

\bigskip

\begin{remark} \normalfont
	In order to construct a nontrivial, \textit{anisotropic} density function $\rho$ with associated kernel $J$, such that \ref{ASS:RHO} and \ref{ASS:J} are fulfilled, we proceed as follows. 
	Let $\mathring \rho\in C^1\big(\R^n\setminus\{0\};[0,\infty)\big)$ be a radially symmetric function, which satisfies 
	\eqref{COND:RHO:1} and \eqref{COND:RHO:2}.
	Without loss of generality, we assume that $\norm{\mathring\rho}_{L^1(\R^n)} = 2n$. This implies that
	\begin{equation*}
		\frac{1}{2} \int_{\R^n}  \frac{\mathring\rho(z)}{\abs{z}^2} z\otimes z \dz = I .
	\end{equation*}
	Moreover, let $B\in \R^{n\times n}$ be a symmetric, positive definite matrix. 
	To simplify the computations, we further assume, without loss of generality, that $\det B=1$. We now define $\rho\in C^1\big(\R^n\setminus\{0\};[0,\infty)\big)$ with
	\begin{align*}
		\rho(x) \coloneqq \mathring\rho(Bx) \frac{\abs{x}^2}{\abs{Bx}^2}
		\quad\text{for all $x\in\R^n\setminus\{0\}$.}
	\end{align*}
	In this way, $\rho$ is clearly even but, in general, not any more radially symmetric.
	Its associated kernel can be expressed as 
	\begin{align*}
		J(x) =  \frac{\mathring\rho(Bx)}{\abs{Bx}^2}
		\quad\text{for all $x\in\R^n\setminus\{0\}$.}
	\end{align*}
	It is easy to check that $\rho$ and $J$ satisfy \ref{ASS:RHO} and Condition~\eqref{COND:RHO:2}. Moreover, a straightforward computation yields $M = B^{-2}$ and $A = B^{-1}$. To verify \eqref{COND:J:2}, let $Q\in \mathrm{SO}(n)$ be arbitrary. 
	Recalling that $\mathring\rho$ is radially symmetric, we deduce that
	\begin{align*}
		&\int_{\R^{n-1}} J\left(AQ
		\begin{pmatrix} z' \\ z_n \end{pmatrix}
		\right) z' \,\mathrm{d}z'
		= \int_{\R^{n-1}} \frac{\mathring\rho(BAQ z)}{| BAQ z |^2} z' \,\mathrm{d}z'
		\\
		&\quad
        = \int_{\R^{n-1}} \frac{\mathring\rho(Q z) }{| Q z |^2} z' \,\mathrm{d}z'
		= \int_{\R^{n-1}} \frac{\mathring\rho(z)}{|z|^2} z' \,\mathrm{d}z'
		= 0
	\end{align*}
    for all $z_n \in \R$.
	This shows that $J$ satisfies Condition~\eqref{COND:J:2} and consequently, \ref{ASS:J} is also fulfilled.
\end{remark}

\medskip

The following lemma provides two important properties of the family $\{\rho_\eps\}_{\eps>0}$.

\begin{lemma}\label{LEM:DIRAC}
    Suppose that Assumption~\ref{ASS:RHO} holds. 
    \begin{enumerate}[label=\textnormal{(\alph*)}, topsep=0ex, leftmargin=*]
        \item\label{LEM:DIRAC:A}
        For any $\eps>0$, it holds $\rho_\eps\in L^1(\R^n)$ with
        \begin{equation}
            \label{ID:RHOL1}
            \norm{\rho_\eps}_{L^1(\R^n)} = \norm{\rho}_{L^1(\R^n)} < \infty.
        \end{equation}
        \item\label{LEM:DIRAC:B} 
        For any $\delta>0$, it holds that
        \begin{equation}
            \underset{\eps\to 0}{\lim}\; \int_{\R^n\setminus B_\delta(0)} \rho_\eps(x) \dx = 0.
        \end{equation}
    \end{enumerate}
\end{lemma}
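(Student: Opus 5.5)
Both parts reduce to the linear change of variables $x = \eps y$, $\mathrm{d}x = \eps^n \mathrm{d}y$, combined with the integrability of $\rho$ from \eqref{COND:RHO:1}.

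For \ref{LEM:DIRAC:A}, we first note that $\rho_\eps$ is measurable and nonnegative, since $\rho$ is and $x\mapsto x/\eps$ is a linear isomorphism. Substituting $y = x/\eps$ gives
\begin{align*}
    \int_{\R^n} \rho_\eps(x)\dx = \int_{\R^n} \eps^{-n}\rho\big(\tfrac{x}{\eps}\big)\dx = \int_{\R^n}\rho(y)\dy .
\end{align*}
Because $\rho\ge 0$, this quantity equals $\norm{\rho}_{L^1(\R^n)}$. It is finite by \eqref{COND:RHO:1}, since $1\le 1+\abs{x}$ implies $\int_{\R^n}\rho \dx \le \int_{\R^n}\rho(x)(1+\abs{x})\dx<\infty$.

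For \ref{LEM:DIRAC:B}, the same substitution, together with the fact that $\abs{x}\ge\delta$ is equivalent to $\abs{y}\ge\delta/\eps$, yields
\begin{align*}
    \int_{\R^n\setminus B_\delta(0)}\rho_\eps(x)\dx = \int_{\R^n\setminus B_{\delta/\eps}(0)}\rho(y)\dy = \int_{\R^n} \rho(y)\,\mathbbm{1}_{\{\abs{y}\ge\delta/\eps\}}\dy .
\end{align*}
As $\eps\to 0$, the radius $\delta/\eps$ tends to $\infty$. Hence the integrand converges pointwise to $0$ for every $y\in\R^n$, and it is dominated by the integrable function $\rho$. The dominated convergence theorem therefore gives the claimed limit, where we apply it along an arbitrary sequence $\eps_k\to 0$.

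Alternatively, a quantitative bound avoids dominated convergence: on the set $\{\abs{y}\ge\delta/\eps\}$ we have $1\le \eps\abs{y}/\delta$, so the integral is bounded by $\frac{\eps}{\delta}\int_{\R^n}\rho(y)\abs{y}\dy$, which is finite by \eqref{COND:RHO:1}. This gives an explicit rate $O(\eps)$.

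Neither part contains a real obstacle. The only points requiring care are the Jacobian factor $\eps^{n}$ in the substitution, which exactly cancels the prefactor $\eps^{-n}$, and the use of $\rho\ge0$ to identify the integral with the $L^1$-norm.
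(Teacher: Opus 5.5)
Your proof is correct and takes the same approach as the paper: the substitution $x\mapsto x/\eps$ gives (a), and the same substitution followed by dominated convergence gives (b). Your additional bound $\frac{\eps}{\delta}\int_{\R^n}\rho(y)\abs{y}\dy$, which uses the first-moment condition in \eqref{COND:RHO:1}, is also valid and adds an explicit $O(\eps)$ rate to the paper's qualitative statement.
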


\begin{proof}
    Let $\eps>0$ be arbitrary. 
    Since $\rho \in L^1(\R^n)$ and the identity in \eqref{ID:RHOL1} follows from the change of variables $x\mapsto \tfrac x\eps$. Thus, assertion (a) is verified.

    Let now $\delta>0$ be arbitrary.
    The change of variables $x\mapsto \tfrac x\eps$ yields
    \begin{equation*}
        \int_{\R^n\setminus B_\delta(0)} \rho_\eps(x) 
        = \int_{\R^n\setminus B_{\delta/\eps}(0)} \rho(x) \dx.
    \end{equation*}
    Since $\rho\in L^1(\R^n)$, the left-hand side converges to zero as $\eps\to 0$ due to Lebesgue's dominated convergence theorem. This verifies (b).
\end{proof}




\section{Nonlocal-to-local convergence of the nonlocal operator} \label{SECT:MAIN}

In general, we assume that \ref{ASS:RHO} holds and
we consider a sufficiently smooth domain $\Omega \subseteq\R^n$.
The main goal of this paper is to investigate the convergence of the nonlocal operator $\LL_\eps^\Omega$, which is (at first formally) given by
\begin{equation}
    \label{DEF:LLE}
    \LL_\eps^\Omega u(x) = \mathrm{P.V.}\int_{\Omega} J_\eps(x-y) \big( u(x) - u(y) \big) \dy
\end{equation}
for all $x\in\Omega$ and any sufficiently regular function $u:\Omega\to\R$, 
to the local differential operator $\LL$,
which is defined as
\begin{align}
    \label{DEF:LL}
    \LL^\Omega u(x) = - \Div\big(M\nabla u(x)\big) = - \sum_{i,j=1}^n M_{ij}\, \partial_{x_j}\partial_{x_k}u(x)
\end{align}
for all $x\in\Omega$ and any sufficiently regular function $u:\Omega\to\R$.
Here, $M\in\R^{n\times n}$ is the symmetric matrix introduced in \ref{ASS:RHO}, which is positive definite.
If the boundary of $\Omega$ is not empty, the differential operator $\LL^\Omega$ is equipped with the natural boundary condition
\begin{equation}
	M \nabla u \cdot \n_{\partial\Omega} = 0 \quad\text{on $\partial\Omega$},
\end{equation}
where $\n_{\partial\Omega}$ denotes the a unit normal vector field on $\partial\Omega$. For any $k\ge 2$ and $p\in [1,\infty]$, we define the corresponding Sobolev spaces as
\begin{equation}
	\label{DEF:WKM}
	W^{k,p}_M(\Omega) = \big\{ u\in W^{k,p}(\Omega) \,:\, 	
	M \nabla u \cdot \n_{\partial\Omega} = 0 \;\text{on $\partial\Omega$}
	\big\}.
\end{equation}


\subsection{Well-definedness of the nonlocal operator} \label{SUBSEC:WELLDEF}

In this subsection, we first show that the expression
\begin{equation*}
    \LL_\eps^\Omega u(x) 
            = \mathrm{P.V.} \int_{\R^n} J_\eps(x-y) \big( u(x) - u(y) \big) \dy
\end{equation*}
is actually well defined if Assumption~\ref{ASS:RHO} is fulfilled. In case $\Omega = \R^n$, the following proposition shows that $\LL_\eps^\Omega$ actually defines a bounded linear operator.

\begin{proposition}\label{PRP:WP:R^N}
    Let $\Omega=\R^n$, suppose that Assumption~\ref{ASS:RHO} holds and let $p\in [1,\infty)$ and $\eps>0$ be arbitrary. Then, the following statements hold.
    \begin{enumerate}[label = \textnormal{(\alph*)}, topsep=0ex, leftmargin=*]
    \item\label{PRP:WP:R^N:A} The operator
    \begin{equation}
    \label{DEF:LRN:CC2}
        \begin{split}
            &\LL_\eps^\Omega: C_c^2(\R^n) \to L^p(\R^n),
            \\
            &\LL_\eps^\Omega u(x) = \int_{\R^n} J_\eps(x-y) \big( u(x) - u(y) - \nabla u(y)\cdot(x-y) \big) \dy
        \end{split}
    \end{equation}
    is well-defined and linear. Moreover, for any $u\in C_c^2(\R^n)$, $\LL_\eps^\Omega u$ can be expressed as
    \begin{equation}
    \label{REP:LE:PV}
        \LL_\eps^\Omega u(x) = \mathrm{P.V.}\int_{\R^n} J_\eps(x-y) \big( u(x) - u(y) \big) \dy
    \end{equation}
    for almost all $x\in\R^n$.
    \item\label{PRP:WP:R^N:B} The operator introduced in \textnormal{(a)} can be extended to a bounded linear operator
    \begin{equation*}
        \LL_\eps^\Omega: W^{2,p}(\R^n) \to L^p(\R^n)
        \quad\text{with}\quad
        \norm{\LL_\eps^\Omega u}_{L^p(\R^n)} \leq C_* \|u\|_{W^{2,p}(\R^n)},
    \end{equation*}
    for all $u\in W^{2,p}(\R^n)$, where the constant $C_*>0$ depends only on $\rho$ and $p$.
    \end{enumerate}
\end{proposition}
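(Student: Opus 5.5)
The proof rests on a Taylor expansion with integral remainder, followed by a splitting of the integral into a near part and a far part. The near part is controlled by second derivatives of $u$ and the far part by $u$ and $\nabla u$. Only Assumption~\ref{ASS:RHO} is used: $\int\rho(z)(1+|z|)\dz<\infty$.

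\emph{Step 1 (pointwise well-definedness).} Fix $u\in C_c^2(\R^n)$ and write $h=x-y$. By Taylor's formula centered at $y$,
\begin{equation*}
u(x)-u(y)-\nabla u(y)\cdot h=\int_0^1(1-t)\,h^{\top}D^2u(y+th)\,h\,\mathrm{d}t,
\end{equation*}
so this quantity is bounded by $\tfrac12|h|^2\|D^2u\|_\infty$. For $|h|\le 1$ the integrand in \eqref{DEF:LRN:CC2} is therefore bounded by $\tfrac12\rho_\eps(h)\|D^2u\|_\infty$. For $|h|>1$ we use $|J_\eps(h)|\,(|u(x)|+|u(y)|+|\nabla u(y)||h|)\le\rho_\eps(h)\,(2\|u\|_\infty+\|\nabla u\|_\infty)$. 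Since $\rho_\eps\in L^1$ by Lemma~\ref{LEM:DIRAC}, the integral converges absolutely for every $x$, and linearity is clear.

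\emph{Step 2 (P.V.\ representation).} On $\{|x-y|\ge r\}$ we compare the truncated integrals of the two integrands. They differ by
\begin{equation*}
\int_{|h|\ge r}J_\eps(h)\,\nabla u(x-h)\cdot h\,\mathrm{d}h .
\end{equation*}
Replace $\nabla u(x-h)$ by $\nabla u(x)$. The resulting term vanishes because $J_\eps$ is even and the region is symmetric; it is absolutely integrable, since $|J_\eps(h)h|\le\rho_\eps(h)/|h|$ and $\rho_\eps(h)/|h|\le\rho_\eps(h)/r$ on $\{|h|\ge r\}$. The error $|\nabla u(x-h)-\nabla u(x)|\,|h|\,|J_\eps(h)|$ is at most $\|D^2u\|_\infty\rho_\eps(h)$ for $|h|\le1$ and at most $2\|\nabla u\|_\infty\rho_\eps(h)$ for $|h|>1$. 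Odd symmetry of $h\mapsto J_\eps(h)h$ also shows that the full correction integral $\int_{\R^n}J_\eps(h)(\nabla u(x-h)-\nabla u(x))\cdot h\,\mathrm{d}h$ equals the original correction. Hence, by dominated convergence as $r\to0$, the truncated P.V.\ integrals converge to \eqref{DEF:LRN:CC2}. A slightly cleaner route is to symmetrize: in \eqref{DEF:LRN:CC2}, substituting $h\mapsto-h$ and averaging gives
\begin{equation*}
\LL_\eps u(x)=\tfrac12\int J_\eps(h)\,\big[2u(x)-u(x-h)-u(x+h)+\dots\big]\,\mathrm{d}h ,
\end{equation*}
and the P.V.\ cancellation is built into this form. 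I would use whichever bookkeeping is shorter.

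\emph{Step 3 ($L^p$ bound and extension).} Split $\LL_\eps u=I_1+I_2$ according to $|h|\le1$ and $|h|>1$. For $I_1$, the Taylor formula and Minkowski's integral inequality give
\begin{equation*}
\|I_1\|_{L^p}\le\int_{|h|\le1}\int_0^1\rho_\eps(h)\,\big\|D^2u(\cdot-h+th)\big\|_{L^p}\,\mathrm{d}t\,\mathrm{d}h\le\|\rho\|_{L^1}\|D^2u\|_{L^p},
\end{equation*}
using translation invariance of the $L^p$ norm. For $I_2$, bound $|J_\eps(h)|\le\rho_\eps(h)$ and $|J_\eps(h)h|\le\rho_\eps(h)$ when $|h|>1$; Minkowski then gives
\begin{equation*}
\|I_2\|_{L^p}\le\|\rho\|_{L^1}\big(2\|u\|_{L^p}+\|\nabla u\|_{L^p}\big).
\end{equation*}
Thus $C_*=3\|\rho\|_{L^1}$ works, and it depends only on $\rho$, uniformly in $\eps$. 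This also shows $\LL_\eps u\in L^p$ for $u\in C_c^2$. Density of $C_c^2$ in $W^{2,p}(\R^n)$ for $p<\infty$ yields the unique bounded extension.

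\emph{Main obstacle.} The delicate step is justifying the P.V.\ identity (Step 2): the kernel $J_\eps$ is not integrable near $0$, so the argument must rely on evenness of $J_\eps$ to cancel the first-order term before passing to the limit. Integrability at infinity of $J_\eps(h)h$ is supplied by the moment condition \eqref{COND:RHO:1}, or simply by $|h|>1$ together with $\rho_\eps\in L^1$. I would also be careful with measurability in Minkowski's inequality, which holds because $D^2u$ is continuous for $u\in C_c^2$.
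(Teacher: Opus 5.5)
Your Steps 1 and 3 are correct, but Step 2 contains a genuine error. It stems from a typo in the printed statement: the first-order correction in \eqref{DEF:LRN:CC2} should be $\nabla u(x)\cdot(x-y)$, as in the paper's proof, not $\nabla u(y)\cdot(x-y)$.

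You took $\nabla u(y)=\nabla u(x-h)$ literally. Your own bounds and dominated convergence then give
\[
\int_{|h|\ge r}J_\eps(h)\big(u(x)-u(x-h)\big)\,\mathrm{d}h\;\longrightarrow\;\LL_\eps^\Omega u(x)+E(x),
\qquad
E(x)=\int_{\R^n}J_\eps(h)\big(\nabla u(x-h)-\nabla u(x)\big)\cdot h\,\mathrm{d}h .
\]
Your sentence about odd symmetry only identifies the limit of the truncated correction with $E(x)$. It never shows $E(x)=0$, and in fact $E\not\equiv 0$.

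To see this, take $u$ quadratic near $x$. Then $u(x)-u(y)-\nabla u(y)\cdot(x-y)=\tfrac12(x-y)^TD^2u\,(x-y)$, whereas $u(x)-u(y)-\nabla u(x)\cdot(x-y)=-\tfrac12(x-y)^TD^2u\,(x-y)$. Concretely, let $u(z)=\tfrac12|z-x|^2\chi\big((z-x)/R\big)$ with a fixed cutoff $\chi\in C^\infty_c(\R^n)$ and $\chi=1$ on $B_1(0)$. The symmetrized form $E(x)=\tfrac12\int_{\R^n}J_\eps(h)\big(\nabla u(x-h)-\nabla u(x+h)\big)\cdot h\,\mathrm{d}h$ has integrand $-\rho_\eps(h)$ for $|h|<R$. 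Since $|\nabla u|\le CR$, the contribution of $\{|h|\ge R\}$ is $O\big(\int_{|h|\ge R}\rho_\eps\,\mathrm{d}h\big)$. Hence $E(x)\to-\norm{\rho}_{L^1(\R^n)}\neq 0$ as $R\to\infty$.

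With $\nabla u(y)$, the operator would converge to $+\Div(M\nabla u)$, which is incompatible with Theorem~\ref{THM:CONV:R^N}. The same term is what the ``$+\dots$'' in your symmetrized formula hides. It equals $\big(\nabla u(x+h)-\nabla u(x-h)\big)\cdot h\approx 2h^TD^2u(x)h$, which does not cancel and contributes exactly $-E(x)$.

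With $\nabla u(x)$ the difficulty disappears, and Step 2 becomes the paper's argument:
\begin{enumerate}
\item On $\{|h|\ge r\}$ the two truncated integrands differ by $J_\eps(h)\nabla u(x)\cdot h$.
\item This is absolutely integrable there, being bounded by $\norm{\nabla u}_{L^\infty(\R^n)}\rho_\eps(h)/r$.
\item It integrates to zero by evenness of $J_\eps$ for every $r>0$, so there is no error term at all.
\item The remaining second-order integrand is bounded by $C\rho_\eps(h)\norm{u}_{W^{2,\infty}(\R^n)}$, so dominated convergence gives \eqref{REP:LE:PV}.
\end{enumerate}
Your Steps 1 and 3 carry over unchanged, since the Taylor remainder is of second order whichever point it is centred at.

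Compared with the paper, your near/far splitting is unnecessary. Since $|J_\eps(h)|\,|h|^2=\rho_\eps(h)$ for all $h$, the remainder bound works on all of $\R^n$. The paper uses H\"older, Fubini and the substitution $x\mapsto x-z+tz$, which is the same computation as your Minkowski estimate, and obtains a bound by $\norm{D^2u}_{L^p(\R^n)}$ alone.
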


\medskip

By means of Proposition~\ref{PRP:WP:R^N}, we can draw conclusions on the well-definedness of the nonlocal operator $\LL_\eps^\Omega$ if $\Omega$ is a domain of class $C^2$.

\begin{corollary}\label{COR:WD:DOM}
	Let $\Omega$ be a (not necessarily bounded) domain with $C^2$-boundary, suppose that \ref{ASS:RHO} holds, and let $p\in [1,\infty)$ and $\eps>0$ be arbitrary. 
	Then, the operator
    \begin{align}
    \label{DEF:LOM:CC2L1LOC}
        \begin{split}
            &\LL_\eps^\Omega: C_c^2(\overline\Omega) \to L^p_{\mathrm{loc}}(\Omega),
            \\
            &\LL_\eps^\Omega u(x) 
            = \mathrm{P.V.} \int_{\Omega} J_\eps(x-y) \big( u(x) - u(y) \big) \dy
        \end{split}
    \end{align}
    is well-defined and linear. 
\end{corollary}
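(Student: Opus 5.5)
The plan is to reduce the statement to Proposition~\ref{PRP:WP:R^N} through a localization argument. Take $u\in C_c^2(\overline\Omega)$ and an open set $U$ with $\overline U$ compact and $\overline U\subset\Omega$. It suffices to show that the principal value in \eqref{DEF:LOM:CC2L1LOC} exists for almost every $x\in U$ and that the resulting function lies in $L^p(U)$. Linearity then follows immediately from linearity of the integral and of the limit $r\searrow 0$.

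\textbf{Step 1: splitting the kernel.} Let $\delta\coloneqq\tfrac12\dist(U,\partial\Omega)>0$. For $x\in U$ we have $B_\delta(x)\subset\Omega$. Because $\Omega$ has $C^2$-boundary, there is an extension $\tilde u\in C_c^2(\R^n)$ of $u$ (by reflection across the boundary near $\partial\Omega$, combined with a cutoff). For $x\in U$ and $0<r<\delta$ we split
\begin{align*}
&\int_{\Omega\cap\{|x-y|\ge r\}} J_\eps(x-y)\big(u(x)-u(y)\big)\dy
\\
&\quad= \int_{\{|x-y|\ge r\}} J_\eps(x-y)\big(\tilde u(x)-\tilde u(y)\big)\dy
- \int_{\R^n\setminus\Omega} J_\eps(x-y)\big(\tilde u(x)-\tilde u(y)\big)\dy .
\end{align*}

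\textbf{Step 2: the full-space term.} By Proposition~\ref{PRP:WP:R^N}(a), the first term converges as $r\searrow0$, for almost all $x$, to $\LL_\eps^{\R^n}\tilde u(x)$. This limit lies in $L^p(\R^n)$, and in particular in $L^p(U)$.

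\textbf{Step 3: the exterior term.} The second term involves no principal value. For $x\in U$ and $y\notin\Omega$ we have $|x-y|\ge\delta$, hence $J_\eps(x-y)\le\delta^{-2}\rho_\eps(x-y)$. Combined with $|\tilde u(x)-\tilde u(y)|\le 2\|\tilde u\|_\infty$, this shows that the remainder $R(x)$ is bounded pointwise by $2\delta^{-2}\|\tilde u\|_\infty\|\rho_\eps\|_{L^1}$, using Lemma~\ref{LEM:DIRAC}(a). Since $U$ is bounded, $R\in L^\infty(U)\subset L^p(U)$. Measurability of $R$ follows from Fubini--Tonelli, since the integrand is jointly measurable and absolutely integrable on $U\times(\R^n\setminus\Omega)$.

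\textbf{Main obstacle.} The only delicate point is the extension $\tilde u\in C^2_c(\R^n)$ in Step 1. It requires a $C^2$ extension operator for a possibly unbounded $C^2$ domain. Because $u$ has compact support, only a compact portion of $\partial\Omega$ is relevant, so the standard local reflection extension of class $C^2$ together with a partition of unity suffices.

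An alternative that avoids the extension altogether is to multiply $u$ by a cutoff $\chi\in C_c^\infty(\Omega)$ with $\chi=1$ on $\{\dist(\cdot,U)<\delta\}$ and to extend $\chi u$ by zero. The difference $u-\chi u$ then vanishes near $x$, so its contribution is again a bounded, non-singular integral estimated exactly as in Step 3. I would use this cutoff version, since it is simpler.
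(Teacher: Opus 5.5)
Your argument is correct and is essentially the paper's proof. The paper also extends $u$ to $\tilde u\in C^2_c(\R^n)$, writes the truncated integral as the full-space truncated integral (whose limit is $\LL_\eps^{\R^n}\tilde u\in L^p(\R^n)$ by Proposition~\ref{PRP:WP:R^N}) minus the exterior integral over $\R^n\setminus\Omega$, and bounds the latter uniformly on compact subsets by $2\delta^{-2}\norm{\tilde u}_{L^\infty}\norm{\rho}_{L^1}$, using the positive distance to $\R^n\setminus\Omega$.

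Your preferred cutoff variant also works. With the zero extension of $\chi u$ you get the same exterior term plus a non-singular term from $(1-\chi)u$, and both are bounded the same way. It has the small advantage of not using the $C^2$ regularity of $\partial\Omega$ at all.
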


Provided that \ref{ASS:RHO} and \ref{ASS:J} and certain assumptions on the domain are fulfilled, we will see later that the operator introduced in \eqref{DEF:LOM:CC2L1LOC} can actually be extended to a linear and bounded operator mapping from $W^{2,p}(\Omega)$ to $L^p(\Omega)$.

In the remainder of this subsection, we present the proofs of Proposition~\ref{PRP:WP:R^N} and Corollary~\ref{COR:WD:DOM}.

\begin{proof}[Proof of Proposition~\ref{PRP:WP:R^N}]
    Let $p\in [1,\infty)$ and $\eps>0$ be arbitrary. 
    In the following, the letter $C$ will denote generic positive constants depending only on $\rho$ and $p$. 
    The concrete value of $C$ may vary throughout the proof.
    
    \noindent\textbf{Proof of \ref{PRP:WP:R^N:A}}. Let $u\in C^2_c(\R^n)$ be arbitrary. 
    By means of Taylor's theorem, we have
    \begin{align}
    \label{EQ:TAYLOR:1}
        u(x) - u(y) - \nabla u(x)\cdot(x-y) = -R_2(x,y),
    \end{align}
    where the error term is given by
    \begin{align}
    \label{EQ:TAYLOR:2}
        R_2(x,y) \coloneqq \sum_{|\beta|=2}\frac{2}{\beta!}\int_0^1(1-t)D^\beta u\big( y+t(x-y) \big)\dt\: (x-y)^\beta. 
    \end{align}
    Therefore, we obtain
    \begin{align*}
        &\int_{\R^n} \big|J_\eps(x-y)  \big( u(x) - u(y) - \nabla u(x)\cdot(x-y) \big) \big| \dy 
        \\
        &\quad\leq \sum_{|\beta|=2}\frac{2}{\beta!}\int_{\R^n}\int_0^1 
            \big| J_\eps(x-y)\,(1-t)\,D^\beta u\big( y+t(x-y) \big)\: (x-y)^\beta \big| \dt\dy 
        \\
        &\quad\leq C\,\|u\|_{W^{2,\infty}(\R^n)}\int_{\R^n}|\rho_\eps(x-y)|\dy
        = C\,\|u\|_{W^{2,\infty}(\R^n)}\,\norm{\rho}_{L^1(\R^n)}.
    \end{align*}
    This proves that for almost all $x\in\R^n$,
    \begin{equation}
        \label{PROP:L1}
        J_\eps(x-\cdot) \big(u(x) - u(\cdot) - \nabla u(x)\cdot(x-\cdot) \big) \in L^1(\R^n)
    \end{equation}
    and therefore, the integral in the definition of $\LL_\eps^\Omega$ actually exists.    
    Recalling \eqref{EQ:TAYLOR:1} and \eqref{EQ:TAYLOR:2}, we use Hölder's inequality and Fubini's theorem to deduce
    \begin{align*}
        &\|\LL_\eps^\Omega u\|_{L^p(\R^n)}^p 
        \\
        &\quad= \int_{\R^n}\Bigg|\sum_{|\beta|=2}
        \frac{2}{\beta!}\int_{\R^n}\int_0^1J_\eps(x-y)\,(1-t)\,D^\beta u\big( y+t(x-y) \big)\: (x-y)^\beta\dt\dy\, \Bigg|^p
        \!\!\!\dx 
        \\
        &\quad\leq C\int_{\R^n} \sum_{|\beta|=2} 
        \left(\int_{\R^n}\int_0^1|\rho_\eps(x-y)||D^\beta u\big( y+t(x-y) \big)|\dt\dy\right)^p\dx 
        \\
        &\quad\leq C\int_{\R^n} \norm{\rho}_{L^1(\R^n)}^{\frac{p-1}{p}} \sum_{|\beta|=2} 
        \int_{\R^n}\int_0^1|\rho_\eps(x-y)||D^\beta u\big( y+t(x-y) \big)|^p\dt\dy\dx
        \\
        &\quad\leq C \sum_{|\beta|=2} \int_0^1\int_{\R^n} 
        \int_{\R^n}|\rho_\eps(x-y)||D^\beta u\big( y+t(x-y) \big)|^p\dy\dx\dt.
    \end{align*}
    Applying first the change of variables $y\mapsto z= x-y$ and afterwards the change of variables $x\mapsto w= x-z+tz$, we infer
    \begin{align}
    \label{EST:LE:LP}
        \|\LL_\eps^\Omega u\|_{L^p(\R^n)}^p 
        &\leq C \sum_{|\beta|=2} \int_0^1
        \left(\int_{\R^n}|\rho_\eps(z)|\dz\right)\left(\int_{\R^n}|D^\beta u(w)|^p\dw\right) \dt
        \notag\\
        &\leq C \norm{\rho}_{L^1(\R^n)} \sum_{|\beta|=2} \left(\int_{\R^n}|D^\beta u(w)|^p\dw\right) 
        \leq C_*^p \, \|u\|_{W^{2,p}(\R^n)}^p
    \end{align}
    for some constant $C_*>0$ depending only on $\rho$ and $p$.
    This proves that the operator $\LL_\eps^\Omega$ is well-defined  and bounded in the sense of \eqref{EST:LE:LP}. Moreover, it is easy to check that the operator $\LL_\eps^\Omega$ is linear.

    It remains to verify the representation \eqref{REP:LE:PV}. To this end, let $r>0$ be arbitrary. Recalling \eqref{PROP:L1}, we deduce
    \begin{align}
        \label{EST:LE:DEL:1}
        &\int_{\abs{x-y}\ge r} 
            J_\eps(x-y) \big(u(x) - u(y)\big)  \dy 
        \notag\\
        &= \int_{\abs{x-y}\ge r} 
            J_\eps(x-y) \big(u(x) - u(y) - \nabla u(x)\cdot(x-y)\big)  \dy
        \notag\\
        &\quad+ \int_{\abs{x-y}\ge r} 
            J_\eps(x-y) \nabla u(x)\cdot(x-y) \dy
    \end{align}
    for almost all $x\in\R^n$.
    We point out that
    \begin{align*}
        &\int_{\abs{x-y}\ge r} 
            \big| J_\eps(x-y) \nabla u(x)\cdot(x-y) \big| \dy
        \\
        &\quad\leq \int_{\abs{x-y}\ge r} 
            \abs{\rho_\eps(x-y)} \abs{\nabla u(x)} \frac{1}{\abs{x-y}} \dy
        \leq \frac{1}{r} \norm{u}_{W^{1,\infty}(\R^n)} \norm{\rho}_{L^1(\R^n)}.
    \end{align*}
    This means that the second integral on the right-hand side of \eqref{EST:LE:DEL:1} actually exists and therefore, the identity \eqref{EST:LE:DEL:1} is justified. 
    As $\rho$ is an even function so is the kernel $J_\eps$.       
    This implies that
    \begin{align}
        \label{INT:LE:1}
        \int_{\abs{x-y}\ge r} J_\eps(x-y) \nabla u(x)\cdot(x-y) \dy = 0.
    \end{align}
    Moreover, invoking \eqref{PROP:L1}, we obtain
    \begin{equation}
        \label{INT:LE:2}
        \int_{\abs{x-y}\ge r} 
            J_\eps(x-y) \big(u(x) - u(y) - \nabla u(x)\cdot(x-y)\big)  \dy
        \to \LL_\eps^\Omega u(x)
    \end{equation}
    as $r \to 0$ for almost all $x\in\R^n$ by means of Lebesgue's dominated convergence theorem.
    Eventually, combining \eqref{EST:LE:DEL:1}, \eqref{INT:LE:1} and \eqref{INT:LE:2}, we conclude that
    \begin{equation*}
        \LL_\eps^\Omega u(x) 
        = \underset{r\searrow 0}{\lim} \int_{\abs{x-y}\ge r} 
            J_\eps(x-y) \big(u(x) - u(y)\big)  \dy .
    \end{equation*}
    By the definition of the principal value, this is exactly \eqref{REP:LE:PV}.
    
    \noindent\textbf{Proof of \ref{PRP:WP:R^N:B}}. 
    We already know from \ref{PRP:WP:R^N:A} that the operator $\LL_\eps^\Omega: C_c^2(\R^n) \to L^p(\R^n)$ is well-defined, linear and bounded in the sense of \eqref{EST:LE:LP}.
    As $C_c^2(\R^n)$ is dense in $W^{2,p}(\R^n)$, 
    we conclude via estimate \eqref{EST:LE:LP} that $\LL_\eps^\Omega$ can be extended to a bounded linear operator
    \begin{align*}
        \LL_\eps^\Omega: W^{2,p}(\R^n) \to L^p(\R^n)
        \quad\text{with}\quad
        \norm{\LL_\eps^\Omega u}_{L^p(\R^n)} \leq C_* \|u\|_{W^{2,p}(\R^n)}.
    \end{align*}
    Hence, the proof is complete.    
\end{proof}

\medskip

\begin{proof}[Proof of Corollary~\ref{COR:WD:DOM}]
Without loss of generality, we assume that $\Omega\subsetneq \R^n$ as the case $\Omega=\R^n$ was already handled in Proposition~\ref{PRP:WP:R^N}. 
Let $u\in C^2_c(\overline\Omega)$ be arbitrary. Since $\Omega$ is of class $C^2$, we can find an extension 
$\tilde{u}\in C^2_c(\R^n)$ with $\tilde{u}\vert_{\Omega} = u$. 
We now fix an arbitrary $x\in \Omega$. Since $\Omega \subset \R^n$ is open, we can find a radius $r(x)>0$ such that $B_{r(x)}(x) \subset \Omega$. 
Then, for any $r\in (0,r(x)]$, we have $B_{r}(x) \subset \Omega$ and thus,
\begin{align}
    \label{DECOMP:OMBR}
    \begin{split}
    &\int_{\Omega\setminus B_{r}(x)} J_\eps (x-y) \big( u(x) - u(y) \big) \dy
    \\
    &= \int_{\R^n\setminus B_r(x)} J_\eps (x-y) \big( \tilde{u}(x) - \tilde{u}(y) \big) \dy
    - \int_{\R^n\setminus \Omega} J_\eps (x-y) \big( \tilde{u}(x) - \tilde{u}(y) \big) \dy
    \end{split}
\end{align}
We already know from Proposition~\ref{PRP:WP:R^N}\ref{PRP:WP:R^N:A} that
\begin{equation*}
    \int_{\R^n\setminus B_r(x)} J_\eps (x-y) \big( \tilde{u}(x) - \tilde{u}(y) \big) \dy \to \LL_\eps^{\R^n} \tilde{u} (x)
    \quad\text{as $r\to 0$.}
\end{equation*}
Therefore, it remains to show that the second integral on the right-hand side of \eqref{DECOMP:OMBR} actually exists. Since $B_{r(x)}(x) \subset \Omega$, we obviously have 
\begin{equation*}
    |x-y| \ge r(x) 
    \quad\text{for all $y\in \R^n\setminus \Omega$}.
\end{equation*}
Recalling the definition of $J_\eps$, this implies that
\begin{align*}
    &\left| \int_{\R^n\setminus \Omega} J_\eps (x-y) 
        \big( \tilde{u}(x) - \tilde{u}(y) \big) \dy \right|
    \le \int_{\R^n\setminus \Omega} J_\eps(x-y) 
        \big|\tilde{u}(x) - \tilde{u}(y) \big| \dy 
    \\
    &\quad\le \frac{2}{r(x)^2}\, \norm{\tilde{u}}_{L^\infty(\R^n)} \norm{\rho_\eps}_{L^1(\R^n)}
    = \frac{2}{r(x)^2}\, \norm{\tilde{u}}_{L^\infty(\R^n)} \norm{\rho}_{L^1(\R^n)}
    < \infty. 
\end{align*}
Hence, sending $r\to 0$ in \eqref{DECOMP:OMBR}, we conclude that the expression
\begin{align}
    \label{WD:PV}
    \begin{split}
    &\LL_\eps^\Omega u(x) 
    = \mathrm{P.V.}\int_{\Omega} J_\eps(x-y) \big( u(x) - u(y) \big) \dy
    \\
    &\quad  
    = \underset{r\searrow 0}{\lim}\,\int_{\Omega\cap\{\abs{x-y}\ge r\}} 
        J_\eps (x-y) \big( u(x) - u(y) \big) \dy
    \\
    &\quad
    = \LL_\eps^{\R^n} \tilde{u} (x) 
        - \int_{\R^n\setminus \Omega} J_\eps (x-y) \big( \tilde{u}(x) - \tilde{u}(y) \big) \dy
    \end{split}
\end{align}
is well-defined for every $x\in \Omega$ provided that $u\in C^2_c(\overline\Omega)$.

Let now $K$ be an arbitrary compact subset of $\Omega$. Since $\Omega$ is open, we know that $r(K) \coloneqq \dist(\R^n\setminus\Omega,K) > 0$. As the pointwise limit of a sequence of measurable functions is measurable, we further know that the mapping 
\begin{equation*}
    \Omega \ni x \mapsto \LL_\eps^\Omega u(x) \in \R
\end{equation*}
is measurable. Using \eqref{WD:PV} along with Proposition~\ref{PRP:WP:R^N}, and proceeding similarly as above, we infer that
\begin{align*}
    \norm{\LL_\eps^\Omega u}_{L^p(K)}^p
    &\le C \norm{\LL_\eps^{\R^n} \tilde{u}}_{L^p(\R^n)}^p
    + \int_K \Bigg| \int_{\R^n\setminus \Omega} J_\eps (x-y) \big( \tilde{u}(x) - \tilde{u}(y) \big) \dy \Bigg|^p \dx
    \\
    &\le C \norm{\LL_\eps^{\R^n} \tilde{u}}_{L^p(\R^n)}^p
    + \frac{2^p |K|}{r(K)^{2p}}\, \norm{\tilde{u}}_{L^\infty(\R^n)}^p \, \norm{\rho}_{L^1(\R^n)}^p < \infty.
\end{align*}
As the compact subset $K$ was arbitrary, this proves that $\LL_\eps^\Omega u \in L^p_\mathrm{loc}(\Omega)$. This means that the operator
\begin{equation*}
    \LL_\eps^\Omega: C_c^2(\overline\Omega) \to L^p_{\mathrm{loc}}(\Omega),
    \quad
    u \mapsto \LL_\eps^\Omega u
\end{equation*}
is well-defined. Moreover, the operator is obviously linear and thus, the proof is complete.
\end{proof}


\subsection{Convergence on the full space} \label{SUBSEC:FULLSPACE}

In this subsection, we consider the case where $\Omega$ is the full space, i.e., $\Omega=\R^n$.
Based on the results established in Proposition~\ref{PRP:WP:R^N}, we obtain the following nonlocal-to-local convergence properties.

\begin{theorem}\label{THM:CONV:R^N}
	We consider $\Omega=\R^n$.
    Suppose that Assumption~\ref{ASS:RHO} holds and let $p\in [1,\infty)$ be arbitrary. 
    Then, the following statements hold.
    \begin{enumerate}[label = \textnormal{(\alph*)}, topsep=0ex, leftmargin=*]
        \item\label{THM:CONV:R^N:A} For all $u\in W^{2,p}(\R^n)$, 
        \begin{equation*}
            \LL_\eps^\Omega u \to \LL^\Omega u 
            \quad\text{in $L^p(\R^n)$ as $\eps\to 0$.}
        \end{equation*}

        \item\label{THM:CONV:R^N:B} There exists a constant $C>0$ such that for all $\eps>0$ and all $u\in W^{3,p}(\R^n)$, it holds
        \begin{equation*}
            \|\LL_\eps^\Omega u - \LL^\Omega u\|_{L^p(\R^n)} \leq C\eps\|u\|_{W^{3,p}(\R^n)}.
        \end{equation*}
    \end{enumerate}
\end{theorem}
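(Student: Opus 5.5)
Both parts will be proved by a second-order Taylor expansion of $u$, applied first to smooth compactly supported functions and then extended by density. The uniform bound $\norm{\LL_\eps^\Omega u}_{L^p(\R^n)}\le C_*\norm{u}_{W^{2,p}(\R^n)}$ from Proposition~\ref{PRP:WP:R^N}\ref{PRP:WP:R^N:B} controls the extension step.

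Let $u\in C_c^\infty(\R^n)$ and substitute $z=x-y$. Evenness of $J_\eps$ removes the first-order term. Taylor's formula with integral remainder, expanded around $x$ (equivalent to \eqref{EQ:TAYLOR:2} after reordering), gives
\begin{align*}
\LL_\eps^\Omega u(x) = -\int_{\R^n} J_\eps(z)\int_0^1(1-t)\, z^\top D^2u(x-tz)\,z \dt\dz .
\end{align*}
Next we compute the moments of the scaled kernel. By the change of variables $z=\eps w$,
\begin{align*}
\int_{\R^n} J_\eps(z)\,z\otimes z\dz=\int_{\R^n}\rho_\eps(z)\frac{z\otimes z}{|z|^2}\dz=2M .
\end{align*}
Since $\int_0^1(1-t)\dt=\tfrac12$, the limit operator can be written as
\begin{align*}
\LL^\Omega u(x)=-\int_{\R^n} J_\eps(z)\int_0^1(1-t)\,z^\top D^2u(x)\,z\dt\dz .
\end{align*}
Subtracting the two representations, and using $|J_\eps(z)z_iz_j|\le\rho_\eps(z)$, we obtain the pointwise bound
\begin{align*}
|\LL_\eps^\Omega u(x)-\LL^\Omega u(x)|\le C\int_{\R^n}\rho_\eps(z)\int_0^1 |D^2u(x-tz)-D^2u(x)|\dt\dz .
\end{align*}
Take the $L^p$-norm in $x$. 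Minkowski's integral inequality (or Jensen/Hölder with respect to the finite measure $\rho_\eps(z)\dz\dt$, exactly as in the proof of Proposition~\ref{PRP:WP:R^N}) then yields
\begin{align*}
\norm{\LL_\eps^\Omega u-\LL^\Omega u}_{L^p}\le C\int_{\R^n}\rho_\eps(z)\int_0^1\norm{D^2u(\cdot-tz)-D^2u}_{L^p}\dt\dz .
\end{align*}

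For part (b), the translation estimate $\norm{f(\cdot-h)-f}_{L^p}\le |h|\,\norm{\nabla f}_{L^p}$ bounds the right-hand side by $C\norm{u}_{W^{3,p}}\int\rho_\eps(z)|z|\dz$. By the change of variables $z\mapsto z/\eps$ this equals $C\eps\norm{u}_{W^{3,p}}\int\rho(w)|w|\dw$, which is finite by \eqref{COND:RHO:1}. This gives the rate for smooth $u$. It extends to all $u\in W^{3,p}(\R^n)$ by density, using the continuity of $\LL_\eps^\Omega$ and of $\LL^\Omega$ from $W^{2,p}$ to $L^p$.

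Part (a) is the only step that is not a direct computation. For smooth $u$ and any $\delta>0$, split the $z$-integral into $|z|<\delta$ and $|z|\ge\delta$. On $|z|<\delta$, the modulus of continuity of translations in $L^p$ gives a bound $\norm{\rho}_{L^1}\sup_{|h|\le\delta}\norm{D^2u(\cdot-h)-D^2u}_{L^p}$, which is small for small $\delta$. On $|z|\ge\delta$, we bound by $2\norm{D^2u}_{L^p}\int_{|z|\ge\delta}\rho_\eps$, which tends to $0$ as $\eps\to0$ by Lemma~\ref{LEM:DIRAC}\ref{LEM:DIRAC:B}. For general $u\in W^{2,p}(\R^n)$, approximate by $u_k\in C_c^\infty$. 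The $\eps$-uniform bound $\norm{\LL_\eps^\Omega(u-u_k)}_{L^p}\le C_*\norm{u-u_k}_{W^{2,p}}$ from Proposition~\ref{PRP:WP:R^N}\ref{PRP:WP:R^N:B}, together with $\norm{\LL^\Omega(u-u_k)}_{L^p}\le C\norm{u-u_k}_{W^{2,p}}$, closes the $3\epsilon$-argument. In fact the estimate derived above holds for all $u\in W^{2,p}$ directly by density, so one may also apply the splitting to $u$ itself.
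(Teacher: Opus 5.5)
Your proof is correct, and it takes a genuinely different and leaner route than the paper.

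\textbf{The paper's route.} The paper splits the $y$-integral at $|x-y|=1$.
Near the diagonal it expands $u$ to third order. The quadratic term combined with $M:D^2u(x)$ then leaves only the tail $\int_{|z|\ge 1}J_\eps(z)\,z^\top D^2u(x)\,z\dz$. Both this tail and the cubic remainder are $O(\eps)$ through the weight $\rho_\eps(z)|z|$.
Away from the diagonal it uses the second-order remainder together with $\rho_\eps(z)\le\rho_\eps(z)|z|$ for $|z|\ge1$.
It extends (b) from $C_c^\infty(\R^n)$ by a weak-compactness argument. It then obtains (a) from (b), the $\eps$-uniform bound of Proposition~\ref{PRP:WP:R^N}, and Banach--Steinhaus.

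\textbf{Your route.} You use that the second moment $\int_{\R^n}J_\eps(z)\,z\otimes z\dz=2M$ is exactly independent of $\eps$. So $\LL^\Omega u$ can be written with the same kernel and the same Taylor weight. No tail term and no third derivatives appear, and the difference is controlled by an average of $L^p$-translation increments of $D^2u$ against $\rho_\eps(z)\dz\dt$.
That single estimate gives (b) through the first moment $\int\rho_\eps|z|\dz=\eps\int\rho|w|\dw$. It gives (a) directly through continuity of translations in $L^p$ and Lemma~\ref{LEM:DIRAC}\ref{LEM:DIRAC:B}, without passing through (b).
It even yields intermediate rates, e.g.\ $O(\eps^s)$ whenever $\norm{D^2u(\cdot-h)-D^2u}_{L^p}\le C|h|^s$ with $s\in(0,1]$.

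\textbf{Density step.} Your extension via strong continuity of $\LL_\eps^\Omega$ and $\LL^\Omega$ on $W^{2,p}(\R^n)$ is cleaner than the paper's Banach--Alaoglu step. That step is unnecessary there, and it would not literally apply for $p=1$, where $L^1$ is not reflexive.
In the full-space setting, the paper's near/far splitting gives nothing beyond your argument; it is simply the more conventional template.

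\textbf{Small aside.} Your remainder, with weight $(1-t)$ at $x-tz$, is the correct expansion about $x$. By contrast, \eqref{EQ:TAYLOR:2} evaluates $D^\beta u$ at $y+t(x-y)$, which corresponds to expanding about $y$. So the two are not literally the same after reordering, but this does not affect your proof.
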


\medskip

\begin{proof}[Proof of Theorem~\ref{THM:CONV:R^N}] Let $p\in [1,\infty)$ and $\eps >0$ be arbitrary. 
In the following, the letter $C$ will denote generic positive constants depending only on $\rho$ and $p$. The concrete value of $C$ may vary throughout the proof.

\noindent\textbf{Proof of \ref{THM:CONV:R^N:B}}. 
    Since $C^\infty_c(\R^n)$ is dense in $W^{3,p}(\R^n)$, we first verify the assertion for functions $u\in C^\infty_c(\R^n)$. Therefore, let $u\in C^\infty_c(\R^n)$ be arbitrary.
    By the definitions of $\LL_\eps^\Omega$ and $\LL$ (see~\eqref{DEF:LRN:CC2} and \eqref{DEF:LL}), we obtain
    \begin{align}
        \label{Splitting}
        &\|\LL_\eps^\Omega u - \LL^\Omega u\|_{L^p(\R^n)}^p 
        \nonumber \\
        &\quad= \int_{\R^n}\Bigg|\int_{\R^n}J_\eps(x-y)
            \big( u(x)-u(y) + \nabla u(x)\cdot (x-y) \big)\dy 
            + M:D^2u(x)\Bigg|^p\dx 
        \nonumber \\
        &\quad\leq  C\big( I_{1,\eps} + I_{2,\eps} \big), \phantom{\int_{\R^n}}
    \end{align}
    where
    \begin{align*}
        I_{1,\eps} &\coloneqq 
            \int_{\R^n}\left|\int_{B_1(x)} \!\! J_\eps(x-y)
            \big( u(x)-u(y) + \nabla u(x)\cdot (x-y) \big)\dy 
            \,+\, M:D^2u(x)\right|^p \!\!\!\dx
        \\
        I_{2,\eps} &\coloneqq
            \int_{\R^n}\left|\int_{\R^n\setminus B_1(x)}J_\eps(x-y)
            \big( u(x)-u(y) + \nabla u(x)\cdot (x-y) \big)\dy \right|^p\dx.
    \end{align*}
    We now estimate the terms $I_{1,\eps}$ and $I_{2,\eps}$ separately. 
    
    \noindent\textit{Ad $I_{1,\eps}$}: Applying Taylor's theorem, we obtain the expansion
    \begin{align}
    \label{EQ:TAYLOR:3}
        u(x) - u(y) + \nabla u(x)\cdot (x-y) = - \frac{1}{2}(x-y)^TD^2u(x)(x-y) - R_3(x,y),
    \end{align}
    where the error term is given by
    \begin{align}
        \label{EQ:TAYLOR:4}
        R_3(x,y) = \sum_{|\beta|=3}\frac{3}{\beta!}\int_0^1(1-t)\,D^\beta u\big( y+t(x-y) \big)\dt\; (x-y)^\beta.
    \end{align}
    Therefore, $I_{1,\eps}$ can be estimated as
    \begin{align}
    \label{EST:I1E}
        I_{1,\eps} 
        &\le C \int_{\R^n}\Bigg| -\int_{B_1(x)} \frac{1}{2} J_\eps(x-y)
            (x-y)^TD^2u(x)(x-y) \dy \,+\, M:D^2u(x)\Bigg|^p \dx
        \notag\\
        &\quad + C \int_{\R^n} \Bigg| \int_{B_1(x)}J_\eps(x-y) R_3(x,y) \dy \Bigg|^p \dx.
    \end{align}
    Employing the change of variables $x\mapsto z=x-y$ and recalling the definition of $M$ (see~\eqref{DEF:M}), we observe that 
    \begin{align}
        \label{Calc:FS:1}
        &- \int_{B_1(x)} \frac{1}{2} J_\eps(x-y)(x-y)^TD^2u(x)(x-y) \dy 
            \,+\, M:D^2u(x) \dx
        \nonumber\\
        &\quad= - \frac{1}{2} \int_{B_1(0)} J_\eps(z)z^T D^2u(x) z\dz 
            \,-\,  \frac{1}{2} \int_{\R^n} J_\eps(z)z^T D^2u(x) z\dz
        \nonumber\\
        &\quad=   \int_{\R^n\setminus B_1(0)}J_\eps(z)z^TD^2u(x)z\dz. 
    \end{align}
    for all $x\in\R^n$. Plugging this identity into \eqref{EST:I1E}, we obtain 
    \begin{align}
    \label{EST:I1E:2}
        I_{1,\eps} 
        &\le C \int_{\R^n}\Bigg| \int_{\R^n\setminus B_1(0)}J_\eps(z)z^TD^2u(x)z\dz \Bigg|^p\dx
        \notag\\
        &\quad + C \int_{\R^n} \Bigg| \int_{B_1(x)}J_\eps(x-y) R_3(x,y) \dy \Bigg|^p \dx.
    \end{align}
    Since $|z|\geq 1$ for all $z\in \R^n\setminus B_1(0)$, the first summand on the right-hand side of \eqref{EST:I1E:2} can be estimated as
    \begin{align*}
        &\int_{\R^n}\left| \int_{\R^n\setminus B_1(0)}J_\eps(z)z^TD^2u(x)z\dz\right|^p\dx 
        \\
        &\quad\leq \int_{\R^n}\left(\int_{\R^n\setminus B_1(0)} \abs{\rho_\eps(z)}\,|z|\,\abs{D^2u(x)} \dz\right)^p\dx 
        \\
        &\quad\leq C\int_{\R^n}\left(\int_{\R^n} \abs{\rho_\eps(z)}\,|z|\dz\right)^p \abs{D^2u(x)}^p \dx 
        \leq C\eps^p\|u\|_{W^{2,p}(\R^n)}^p.
    \end{align*}
    Recalling the definition of the error term, we obtain the following estimate for the the second summand on the right-hand side of \eqref{EST:I1E:2}: 
    \begin{align*}
        &\int_{\R^n}\left|\int_{B_1(x)}J_\eps(x-y)R_3(x,y)\dy\right|^p\dx 
        \\
        &= \int_{\R^n}\left|\sum_{|\beta|=3}\frac{3}{\beta!}\int_{B_1(x)}\int_0^1 
            J_\eps(x-y)(1-t)D^\beta u\big( y+t(x-y) \big)\; (x-y)^\beta\dt\dy\right|^p\dx 
        \\
        &\leq C \sum_{|\beta|=3} \int_{\R^n}\left(\int_{B_1(x)}\int_0^1 \abs{\rho_\eps(x-y)}\, |x-y|\, 
            |D^\beta u\big( y+t(x-y) \big)|\dt\dy\right)^p\dx
    \end{align*}
    Next, we use the splitting 
    \begin{align}
        \label{SPLIT:PQ*}
        \abs{\rho_\eps(x-y)}\,|x-y| 
        = \big(\abs{\rho_\eps(x-y)}\,|x-y|\big)^{\frac{1}{q}} 
            \big(\abs{\rho_\eps(x-y)}\, |x-y|\big)^{\frac{1}{p}},
    \end{align}
    where $q$ denotes the conjugate exponent to $p$, i.e., $\frac{1}{q}+\frac{1}{p} =1$. Then, using Hölder's inequality as well as the changes of variables $y\mapsto z= x-y$ and $x\mapsto w= x-z+tz$, we infer 
    \begin{align*}
        &\int_{\R^n}\left|\int_{B_1(x)}J_\eps(x-y)R_3(x,y)\dy\right|^p\dx 
        \\
        &\quad
        \leq C\sum_{|\beta|=3}\int_{\R^n} \left(\int_{\R^n} \abs{\rho_\eps(z)}\,|z|\dz\right)^{p-1}
        \\[-0.5ex]
        &\qquad\qquad\qquad\quad
            \cdot \left(\int_{\R^n}\int_0^1 \abs{\rho_\eps(z)}\,|z|\, 
            \abs{D^\beta u(x-z + tz)}^p \dt\dz \right)\dx 
        \\[1ex]
        &\quad\leq C\eps^{p-1} \sum_{|\beta|=3} \int_0^1 \int_{\R^n}\int_{\R^n} \abs{\rho_\eps(z)}\,|z|\, 
            \abs{D^\beta u(w)}^p \dz\dw \dt
        \\[1.5ex]
        &\quad\leq C\eps^p\|u\|_{W^{3,p}(\R^n)}^p.
    \end{align*} 
    Altogether, this shows that
    \begin{align}
        \label{EST:I1E:FIN}
        I_{1,\eps} \leq C\eps^p \|u\|_{W^{3,p}(\R^n)}^p.
    \end{align}
    
    \noindent\textit{Ad $I_{2,\eps}$}: Applying Taylor's theorem, we obtain the expansion
    \begin{align}
    \label{EQ:TAYLOR:5}
        u(x) - u(y) - \nabla u(x)\cdot(x-y) = -R_2(x,y),
    \end{align}
    where the error term is given by
    \begin{align}
    \label{EQ:TAYLOR:6}
        R_2(x,y) \coloneqq \sum_{|\beta|=2}\frac{2}{\beta!}\int_0^1(1-t)D^\beta u\big( y+t(x-y) \big)\dt\: (x-y)^\beta. 
    \end{align}
    Hence, $I_{2,\eps}$ can be estimated as
    \begin{align*}
        I_{2,\eps}
        &= C\int_{\R^n}\left|\int_{\R^n\setminus B_1(x)}J_\eps(x-y)
            R_2(x,y) \dy \right|^p\dx
        \nonumber\\
        &\le C \sum_{|\beta|=2} \int_{\R^n}\left( \int_{\R^n\setminus B_1(x)} \int_0^1 
            \abs{\rho_\eps(x-y)} \abs{D^\beta u\big( y+t(x-y) \big)} \dy \dt \right)^p\dx
    \end{align*}
    Using the splitting
    \begin{align}
        \label{SPLIT:PQ}
        \abs{\rho_\eps(x-y)}
        = \abs{\rho_\eps(x-y)}^{\frac{1}{q}} 
            \abs{\rho_\eps(x-y)}^{\frac{1}{p}},
    \end{align}
    where $\frac{1}{q}+\frac{1}{p} =1$,
    Hölder's inequality, and the changes of variables $y\mapsto z= x-y$ and $x\mapsto w = x-z+tz$, we infer 
    \allowdisplaybreaks
    \begin{align}
        \label{EST:I2E:FIN}
        I_{2,\eps}
        &\leq C\sum_{|\beta|=2}\int_{\R^n}\left(\int_{\R^n} \abs{\rho_\eps(z)}\dz\right)^{p-1}
        \nonumber\\[-0.5ex]
        &\qquad\qquad\qquad
            \cdot \left(\int_{\R^n\setminus B_1(0)}\int_0^1 \abs{\rho_\eps(z)}\, 
            \abs{D^\beta u(x-z + tz)}^p \dt\dz \right)\dx 
        \nonumber\\[1ex]
        &\leq C\eps^{p-1} \sum_{|\beta|=2} \int_0^1 \int_{\R^n}\int_{\R^n\setminus B_1(0)} 
            \abs{\rho_\eps(z)}\,|z|\, 
            \abs{D^\beta u(w)}^p \dz\dw \dt
        \nonumber\\[1.5ex]
        &\leq C\eps^p\|u\|_{W^{2,p}(\R^n)}^p,
    \end{align}
    \allowdisplaybreaks[0]
    
    Combining \eqref{Splitting} with \eqref{EST:I1E:FIN} and \eqref{EST:I2E:FIN}, we eventually conclude that
    \begin{align}
        \label{CONV:CCINF}
        \|\LL_\eps^\Omega u - \LL^\Omega u\|_{L^p(\R^n)} \leq C\eps\|u\|_{W^{3,p}(\R^n)}
    \end{align}
    for all $u\in C^\infty_c(\R^n)$.

    Let now $u\in W^{3,p}(\R^n)$ be arbitrary. Then, because of density, there exists a sequence $(u_k)_{k\in\N}\subseteq C^\infty_c(\R^n)$ such that $u_k \rightarrow u$ in $W^{3,p}(\R^n)$ as $k\rightarrow\infty$.
    In particular, this entails that $(u_k)_{k\in\N}$ is bounded in $W^{3,p}(\R^n)$.
    
    \newpage\noindent
    Thus, it follows from \eqref{CONV:CCINF} that
    \begin{align*}
        \Big(\LL_\eps^\Omega u_k - \LL^\Omega u_k\Big)_{k\in\N} \subseteq L^p(\R^n)
    \end{align*}
    is bounded. Hence, according to the Banach--Alaoglu theorem, there exists a function $w\in L^p(\R^n)$ such that, up to subsequence extraction,
    \begin{align*}
        \LL_\eps^\Omega u_k - \LL^\Omega u_k \rightharpoonup w 
        \quad \text{in $L^p(\R^n)$ as $k\rightarrow\infty$.}
    \end{align*}    
    Because of Proposition~\ref{PRP:WP:R^N}, we know that
    \begin{equation*}
        \LL_\eps^\Omega u_k \to \LL_\eps^\Omega u  
        \quad \text{in $L^p(\R^n)$ as $k\rightarrow\infty$.}
    \end{equation*}
    Moreover, recalling the definition of $\LL$, it is further clear that
    \begin{equation*}
        \LL^\Omega u_k \to \LL^\Omega u  
        \quad \text{in $L^p(\R^n)$ as $k\rightarrow\infty$.}
    \end{equation*}
    Consequently, due to the uniqueness of the weak limit, we have $w = \LL_\eps^\Omega u - \LL^\Omega u$.
    Thus, by means of weak lower semi-continuity, we conclude
    \begin{align*}
        &\|\LL_\eps^\Omega u - \LL^\Omega u\|_{L^p(\R^n)} 
        \leq \liminf_{k\rightarrow\infty}\|\LL_\eps^\Omega u_k - \LL^\Omega u_k\|_{L^p(\R^n)}
        \\
        &\quad \leq \limsup_{k\rightarrow\infty}C\eps\|u_k\|_{W^{3,p}(\R^n)} 
        = C\eps \|u\|_{W^{3,p}(\R^n)}.
    \end{align*}
    Since $u\in W^{3,p}(\R^n)$ was arbitrary, this proves (b).

    \medskip

    \noindent\textbf{Proof of \ref{THM:CONV:R^N:A}}.
    We already know from Proposition~\ref{PRP:WP:R^N}\ref{PRP:WP:R^N:B} that the operator norm of the nonlocal operator $\LL_\eps^\Omega: W^{2,p}(\R^n) \to L^p(\R^n)$ is bounded by the constant $C_*$, which is independent of $\eps$. Together with the definition of $\LL^\Omega u$ (see~\eqref{DEF:LL}), this implies that
    \begin{equation*}
        \|\LL_\eps^\Omega u - \LL^\Omega u\|_{L^p(\R^n)}
        \le \|\LL_\eps^\Omega u\|_{L^p(\R^n)}
            + \|\LL^\Omega u\|_{L^p(\R^n)}
        \le C\|u\|_{W^{2,p}(\R^n)}
    \end{equation*}
    for all $u\in W^{2,p}(\R^n)$.
    We further know from part \ref{THM:CONV:R^N:B} that for any $u\in W^{3,p}(\R^n)$, it holds that
    \begin{equation*}
        \LL_\eps^\Omega u - \LL^\Omega u \to 0
        \quad\text{in $L^p(\R^n)$ as $\eps\to 0$.}
    \end{equation*}
    As the inclusion $W^{3,p}(\R^n) \subset W^{2,p}(\R^n)$ is dense, the assertion of \ref{THM:CONV:R^N:A} directly follows from the Banach--Steinhaus theorem. 
    
    Hence, the proof of Theorem~\ref{THM:CONV:R^N} is complete.
\end{proof}


\subsection{Convergence on a curved half space} \label{SUBSEC:HALFSPACE}

In this section, we next consider the case where our domain is a curved half space. More precisely, we consider 
\begin{equation}
	\R^n_\gamma = \big\{ x\in\R^n \,:\, x_n > \gamma(x_1,\ldots,x_{n-1}) \big\}
\end{equation}
for a prescribed function $\gamma\in C^{3}_b(\R^{n-1})$. 

\medskip

\setstretch{1.15}
\begin{theorem}\label{THM:CONV:HS}
	Let $\gamma\in C^{3}_b(\R^{n-1})$, 
    $p\in [1,\infty)$, $\eps\in (0,1]$, and suppose that \ref{ASS:RHO} and \ref{ASS:J} hold with $M=I$. 
	Then, if $\norm{\gamma}_{C^1_b(\R^{n-1})}$ is sufficently small, 
	the operator introduced in \eqref{DEF:LOM:CC2L1LOC} $\big($restricted to $C^2_c(\overline{\R^n_\gamma}) \cap W^{2,p}_I(\R^n_\gamma)\big)$ can be extended to a bounded linear operator
    \begin{equation}
        \label{DEF:LOM:HS}
        \LL_\eps^{\R^n_\gamma}: W^{2,p}_I(\R^n_\gamma) \to L^p(\R^n_\gamma)
    \end{equation}
    with
    \begin{equation}
        \label{BND:LOM:HS}
        \norm{\LL_\eps^{\R^n_\gamma} u}_{L^p(\R^n_\gamma)} 
        \leq c_\gamma \|u\|_{W^{2,p}(\R^n_\gamma)}
    \end{equation}
    for all $u\in W^{2,p}_I(\R^n_\gamma)$, where $c_\gamma$ is a positive constant depending only on $\gamma$, $\rho$ and $p$.
    Moreover, there exists a positive constant $C_\gamma$ depending only on $\gamma$, $\rho$ and $p$ such that for all $\eps>0$ and all $u\in W^{3,p}_I(\R^n_\gamma)$, it holds
	\begin{equation}
		\label{CONV:CHS}
		\big\|\LL_\eps^{\R^n_\gamma} u + \Delta u \big\|_{L^p(\R^n_\gamma)} 
        \leq C_\gamma\, \sqrt[p]{\eps}\, \|u\|_{W^{3,p}(\R^n_\gamma)}.
	\end{equation}
    Furthermore, for all $u\in W^{2,p}_I(\R^n_\gamma)$, it holds
    \begin{equation}
		\label{CONV:CHS:2}
		\LL_\eps^{\R^n_\gamma} u \to - \Delta u
        \quad\text{in $L^p(\R^n_\gamma)$ as $\eps\to 0$.}
	\end{equation}
\end{theorem}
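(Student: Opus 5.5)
The plan is to reduce to the full-space result (Theorem~\ref{THM:CONV:R^N}) by an even reflection across the curved boundary, and to control the error produced by the truncation of the kernel at $\partial\R^n_\gamma$ through a boundary-layer estimate of width $\eps$.

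\textbf{Step 1: splitting into an interior part and a boundary part.}
Take $u\in C^3_c(\overline{\R^n_\gamma})\cap W^{3,p}_I(\R^n_\gamma)$; density will handle general $u$ at the end. Let $\tilde u$ be an extension of $u$ to $\R^n$. Since $\|\gamma\|_{C^1_b}$ is small, the map $\Phi(x',x_n)=(x',\,2\gamma(x')-x_n)$ is a bi-Lipschitz, $C^3$ reflection. As in \eqref{WD:PV}, for $x\in\R^n_\gamma$ we write
\begin{align*}
\LL_\eps^{\R^n_\gamma}u(x)=\LL_\eps^{\R^n}\tilde u(x)-\int_{\R^n\setminus\R^n_\gamma}J_\eps(x-y)\big(\tilde u(x)-\tilde u(y)\big)\dy .
\end{align*}
For the first term, Theorem~\ref{THM:CONV:R^N}(b) gives $\|\LL_\eps^{\R^n}\tilde u+\Delta \tilde u\|_{L^p}\le C\eps\|u\|_{W^{3,p}}$. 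The whole problem therefore reduces to showing that the boundary term
\begin{align*}
B_\eps u(x):=\int_{\R^n\setminus\R^n_\gamma}J_\eps(x-y)\big(\tilde u(x)-\tilde u(y)\big)\dy
\end{align*}
satisfies $\|B_\eps u\|_{L^p(\R^n_\gamma)}\le C\eps^{1/p}\|u\|_{W^{3,p}}$, together with $\|B_\eps u\|_{L^p}\le C\|u\|_{W^{2,p}}$.

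\textbf{Step 2: Taylor expansion of the boundary term.}
Expand $\tilde u(x)-\tilde u(y)=\nabla u(x)\cdot(x-y)+O(|x-y|^2)$. The quadratic remainder is bounded by $\rho_\eps(x-y)$ times second derivatives, so it is handled exactly as $I_{2,\eps}$ and contributes $O(\|u\|_{W^{2,p}})$. The real difficulty is the first-order term $\nabla u(x)\cdot\int_{y\notin\R^n_\gamma}J_\eps(x-y)(x-y)\dy$, where $J_\eps(z)|z|\sim\rho_\eps(z)/|z|$ is not integrable uniformly in $\eps$ near the boundary.

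\textbf{Step 3: using the Neumann condition.}
Write $d=\dist(x,\partial\R^n_\gamma)$. Decompose $\nabla u(x)$ into a normal part, taken at the nearest boundary point, and a tangential part. The normal part vanishes at the boundary because $\nabla u\cdot\n=0$ there. It is therefore $O(d\,|D^2u|)$, and the factor $d$ compensates the singular integral $\int_{|z|>d}\rho_\eps(z)/|z|\dz$. For the tangential part, first flatten the boundary locally, i.e.\ replace $\R^n\setminus\R^n_\gamma$ by a half-space up to an error controlled by $\|\gamma\|_{C^2}|z|^2$. Then use \eqref{COND:J:2} with $A=I$ and $Q$ rotating $e_n$ to the normal: the integral of $J_\eps(z)z'$ over each slice $\{z_n=\text{const}\}$ vanishes, so the tangential component is zero exactly on half-spaces. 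The curvature error is of order $\int\rho_\eps(z)|z|\dz\lesssim\eps$.

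Since $\rho$ may be singular like $|z|^{2-\alpha-n}$, these estimates require the pointwise bounds \eqref{COND:RHO:2}. After this cancellation, what remains for $|B_\eps u(x)|$ is a bound of the form $|\nabla u|$ or $|D^2u|$ times a weight $w_\eps(d)$ satisfying $\int_0^\infty w_\eps(d)^p\,\mathrm{d}d\lesssim\eps$, for instance $w_\eps(d)\lesssim \min(1,(\eps/d)^{N'})$. A trace-type estimate in the normal variable, $\sup_{d}\|\nabla u(\cdot,d)\|_{L^p(\R^{n-1})}\lesssim\|u\|_{W^{2,p}}$, then yields the rate $\eps^{1/p}$; the normal part needs $W^{3,p}$.

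\textbf{Step 4: conclusion.}
The same computation without extracting a rate gives the uniform bound \eqref{BND:LOM:HS}. This yields the extension by density of $C^3_c\cap W^{3,p}_I$ in $W^{2,p}_I$, which is itself a point to check given the boundary condition. Estimate \eqref{CONV:CHS} then follows as in Theorem~\ref{THM:CONV:R^N} via weak lower semicontinuity, and \eqref{CONV:CHS:2} follows by Banach--Steinhaus.

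\textbf{Main obstacle.}
I expect Step 3 to be the hardest: the exact cancellation of the tangential first-order term, with curvature errors, under singular anisotropic kernels. I would first try it on the flat half-space $\gamma=0$ and only then treat the curvature as a perturbation, using the smallness of $\|\gamma\|_{C^1_b}$.
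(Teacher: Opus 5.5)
Your proposal is correct in outline, and its architecture matches the paper's:
\begin{itemize}
\item split $\LL^{\R^n_\gamma}_\eps u=\LL^{\R^n}_\eps\tilde u-\mathcal R_\eps\tilde u$ and apply Theorem~\ref{THM:CONV:R^N} to the first term;
\item Taylor expand the boundary term;
\item kill the tangential first-order part with \eqref{COND:J:2} on half-spaces;
\item use $\nabla u\cdot\n=0$ for the normal part;
\item extract $\eps^{1/p}$ from a layer of width $\eps$ via a one-dimensional Sobolev/trace bound in the normal variable (the paper's $W^{1,p}((0,2))\hookrightarrow L^\infty((0,2))$).
\end{itemize}

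Where you genuinely differ is the treatment of curvature. The paper transports both $x$ and $y$ by the global diffeomorphism $F_\gamma$, so the kernel becomes $J_\eps(G_\gamma(x,y)(x-y))$ on $\R^n_+\times\R^n_-$. It then freezes $G_\gamma(x,y)$ at $DF_\gamma(x)=U(x)H(x)$ (the term $I^{3,3}_\eps$, which uses the bound on $\nabla\rho$ in \eqref{COND:RHO:2}) and the Jacobian at $x$ (the term $I^{3,2}_\eps$). The block structure of $H$ then reduces $I^{3,1,1}_\eps$ to a rotated flat half-space. You instead leave the kernel untouched and compare the exterior with the tangent half-space at the nearest boundary point, so curvature enters only through the geometry of the integration region. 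This avoids the gradient bound on $\rho$, the smallness of $\|\gamma\|_{C^1_b}$ (which the paper needs for $|G_\gamma(x,y)z|\ge\frac12|z|$), and the $UH$ factorization. The price is working in tubular coordinates $(\sigma,d)$ near $\partial\R^n_\gamma$, plus a crude estimate beyond the tubular neighbourhood. The paper, by contrast, has the product structure $\R^{n-1}\times(0,\infty)$ available for Hardy's inequality and the one-dimensional embedding.

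\textbf{Corrections.}
\begin{itemize}
\item \emph{Size of the curvature error.} It is not $\lesssim\int\rho_\eps(z)|z|\,\mathrm{d}z\lesssim\eps$ pointwise. The region between $\partial\R^n_\gamma$ and the tangent hyperplane has thickness of order $|z'|^2$, so the error is comparable to $\int_{\R^{n-1}}|z'|\,\rho_\eps(z')\,\mathrm{d}z'$. This quantity is invariant under $z'\mapsto\eps z'$: it is $O(1)$ for $d\lesssim\eps$ and decays like $(\eps/d)^{N+\alpha-2}$ beyond. It therefore fits your weight $w_\eps(d)$ and still yields $\eps^{1/p}$; it is the counterpart of $I^{3,3}_\eps$, which is likewise only $O(\eps)$ after taking $p$-th powers. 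But it is of the same order as the other boundary terms, not smaller.
\item \emph{The uniform bound.} \eqref{BND:LOM:HS} is not literally ``the same computation''. For the normal part you cannot use $\sup_{s<d}|D^2u|$, which costs $W^{3,p}$. You must instead bound it by $\frac1d\int_0^d|D^2u|\,\mathrm{d}s$ and invoke Hardy's inequality, as the paper does for $I^{3,1,1}_\eps$.
\item \emph{The case $p=1$.} Hardy's inequality fails at $p=1$, in the paper too. Use Fubini instead with the exact weight $K(t)=\int_{|z|\ge t}\rho_\eps(z)|z|^{-1}\,\mathrm{d}z$, since $\int_s^\infty K(t)\,\mathrm{d}t\le\|\rho\|_{L^1(\R^n)}$.
\item \emph{The reflection.} The reflection $\Phi$ plays no role and is not needed. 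A vertical even reflection would not even be $C^1$ across $\partial\R^n_\gamma$ under $\nabla u\cdot\n=0$ unless $\nabla\gamma=0$; any bounded extension operator suffices.
\end{itemize}
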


\medskip

\begin{corollary}\label{COR:CONV:HS}
	Let $\gamma\in C^{3}_b(\R^{n-1})$, $Q\in \mathrm{SO}(n)$, $\eps\in (0,1]$, and suppose that \ref{ASS:RHO} and \ref{ASS:J} hold with $M=I$. 
	Then, the results of Theorem~\ref{THM:CONV:HS} hold true for $Q\R^n_\gamma$ instead of $\R^n_\gamma$ and $W^{k,p}_Q(Q\R^n_\gamma)$ instead of $W^{k,p}_I(\R^n_\gamma)$ for $k=2,3$.
\end{corollary}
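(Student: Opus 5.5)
The plan is to reduce the statement to Theorem~\ref{THM:CONV:HS} by a rotation of coordinates. Rather than rotating only the domain, I will also rotate the kernel, so that $\R^n_\gamma$ and the rotated kernel satisfy exactly the hypotheses of that theorem. Let $\Phi_Q:\R^n\to\R^n$, $\Phi_Q(\tilde x)=Q\tilde x$, so that $\Phi_Q(\R^n_\gamma)=Q\R^n_\gamma$. I read the space $W^{k,p}_Q(Q\R^n_\gamma)$ as the functions in $W^{k,p}(Q\R^n_\gamma)$ satisfying the boundary condition $\nabla u\cdot\n_{\partial(Q\R^n_\gamma)}=0$, which is what $M\nabla u\cdot\n=0$ becomes for $M=I$. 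For $u$ on $Q\R^n_\gamma$, set $\tilde u\coloneqq u\circ\Phi_Q$, defined on $\R^n_\gamma$.

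\textbf{Step 1: the rotated kernel satisfies the hypotheses.} Define $\rho^Q(x)\coloneqq\rho(Qx)$ and $J^Q(x)=\rho^Q(x)/|x|^2=J(Qx)$.
\begin{itemize}
\item Evenness, measurability, nonnegativity and \eqref{COND:RHO:1} are preserved, because $|Qx|=|x|$ and $|\det Q|=1$.
\item The pointwise bounds \eqref{COND:RHO:2} are preserved with the same constants, since $|\nabla\rho^Q(x)|=|Q^T\nabla\rho(Qx)|=|\nabla\rho(Qx)|$.
\item The momentum matrix transforms by the substitution $z\mapsto Q^Tz$ as $M^Q=Q^TMQ=I$, so $A^Q=I$.
\item Condition \eqref{COND:J:2} for $J^Q$ reads $\int_{\R^{n-1}}J(QQ'z)z'\,\mathrm dz'=0$ for all $Q'\in\mathrm{SO}(n)$. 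This is exactly \eqref{COND:J:2} for $J$ with $A=I$ and the rotation $QQ'\in\mathrm{SO}(n)$.
\end{itemize}
Hence $\rho^Q$ satisfies \ref{ASS:RHO} and \ref{ASS:J} with $M=I$. Moreover $\rho^Q_\eps(x)=\rho_\eps(Qx)$ and $J^Q_\eps(x)=J_\eps(Qx)$.

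\textbf{Step 2: transformation identities.} For $u\in C^2_c(\overline{Q\R^n_\gamma})$ and $x=Q\tilde x$, substitute $y=Q\tilde y$ in the truncated integrals in \eqref{DEF:LOM:CC2L1LOC}. Since $|x-y|=|\tilde x-\tilde y|$, the truncation sets $\{|x-y|\ge r\}$ correspond exactly, and the principal values agree:
\[
\LL_\eps^{Q\R^n_\gamma}u(Q\tilde x)=\LL_{\eps,\rho^Q}^{\R^n_\gamma}\tilde u(\tilde x),
\]
where the operator on the right is built with the kernel $J^Q_\eps$. The following further facts hold:
\begin{itemize}
\item $\Delta$ is rotation invariant, so $\Delta\tilde u=(\Delta u)\circ\Phi_Q$.
\item $\nabla\tilde u(\tilde x)=Q^T\nabla u(Q\tilde x)$ and $\n_{\partial(Q\R^n_\gamma)}(Q\tilde x)=Q\,\n_{\partial\R^n_\gamma}(\tilde x)$. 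Hence $\nabla\tilde u\cdot\n_{\partial\R^n_\gamma}=(\nabla u\cdot\n_{\partial (Q\R^n_\gamma)})\circ\Phi_Q$, so $u\mapsto\tilde u$ maps $W^{k,p}_Q(Q\R^n_\gamma)$ bijectively onto $W^{k,p}_I(\R^n_\gamma)$.
\item $\Phi_Q$ is measure preserving and its derivatives are orthogonal matrices, so $\|\tilde u\|_{W^{k,p}(\R^n_\gamma)}$ and $\|u\|_{W^{k,p}(Q\R^n_\gamma)}$ are equivalent with constants depending only on $n,k,p$. They are even equal if the norm uses the Frobenius norm of $D^ku$.
\item $L^p$ norms are preserved exactly.
\end{itemize}
The map $u\mapsto\tilde u$ also sends $C^2_c(\overline{Q\R^n_\gamma})$ onto $C^2_c(\overline{\R^n_\gamma})$, so the dense classes used in Theorem~\ref{THM:CONV:HS} correspond.

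\textbf{Step 3: transfer the conclusions.} Apply Theorem~\ref{THM:CONV:HS} to $\rho^Q$ on $\R^n_\gamma$; the smallness of $\|\gamma\|_{C^1_b}$ is the same hypothesis. Conjugating by $\Phi_Q$ then yields:
\begin{itemize}
\item the bounded extension $\LL_\eps^{Q\R^n_\gamma}:W^{2,p}_Q(Q\R^n_\gamma)\to L^p(Q\R^n_\gamma)$, defined by $u\mapsto\big(\LL^{\R^n_\gamma}_{\eps,\rho^Q}\tilde u\big)\circ\Phi_Q^{-1}$;
\item the bound \eqref{BND:LOM:HS};
\item the rate \eqref{CONV:CHS} with factor $\sqrt[p]{\eps}$;
\item the convergence \eqref{CONV:CHS:2}.
\end{itemize}
Each comes with a constant multiplied by the norm-equivalence constants from Step~2. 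Since the constants in Theorem~\ref{THM:CONV:HS} depend on $\rho^Q$, the resulting constants depend on $\gamma,\rho,p$ and $Q$.

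\textbf{Main obstacle.} There is no serious obstacle. The point needing care is the verification of \eqref{COND:J:2} for $J^Q$ in Step~1: it holds only because \eqref{COND:J:2} is assumed for all of $\mathrm{SO}(n)$ and this group is closed under composition. A secondary point is checking that the principal value truncations are rotation invariant, which follows from $|Q(\tilde x-\tilde y)|=|\tilde x-\tilde y|$.
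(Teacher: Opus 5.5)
Your proposal is correct and follows essentially the same route as the paper: rotate the kernel to $\rho(Q\,\cdot)$, check that it satisfies \ref{ASS:RHO} and \ref{ASS:J} with momentum matrix $I$, and transfer Theorem~\ref{THM:CONV:HS} through the measure-preserving substitution $x\mapsto Qx$, $y\mapsto Qy$. Your write-up is, if anything, more careful than the paper's in three places: the boundary-condition transfer via $\nabla\tilde u=Q^T(\nabla u)\circ\Phi_Q$ and $\n_{\partial(Q\R^n_\gamma)}\circ\Phi_Q=Q\,\n_{\partial\R^n_\gamma}$, the explicit check of \eqref{COND:J:2} using closure of $\mathrm{SO}(n)$ under composition, and the direct transfer of the convergence statement by conjugation.
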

\setstretch{1.0}

\medskip

\begin{proof}[Proof of Theorem~\ref{THM:CONV:HS}]
    To provide a cleaner presentation, we will usually refrain from indicating the principal value by the symbol $\mathrm{P.V.}$ whenever the meaning is clear. In the following, the letter $C$ will denote generic positive constants depending only on $\gamma$, $\rho$ and $p$. The concrete value of $C$ may vary throughout the proof. The proof is split into three steps. 
    
    \noindent\textbf{Step~1: Proof of Estimate~\eqref{BND:LOM:HS} for $u\in C^2_c(\overline{\R^n_\gamma}) \cap W^{2,p}_I(\R^n_\gamma)$.}\\
    Let $u\in C^2_c(\overline{\R^n_\gamma}) \cap W^{2,p}_I(\R^n_\gamma)$ be arbitrary. Hence, according to Corollary~\ref{COR:WD:DOM}, the expression
    \begin{equation*}
        \LL_\eps^{\R^n_\gamma} u(x) = \mathrm{P.V.}\int_{\R^n_\gamma} J_\eps(x-y) \big( u(x) - u(y) \big) \dy
    \end{equation*}
    is well-defined.
    Our first goal is to show that $\LL_\eps^{\R^n_\gamma} u \in L^p(\R^n_\gamma)$ and that 
    \begin{equation}
        \label{EST:LOM:CC2*}
        \norm{\LL_\eps^{\R^n_\gamma} u}_{L^p(\R^n_\gamma)} \leq C \|u\|_{W^{2,p}(\R^n_\gamma)}.
    \end{equation}
    By construction, it is clear that $\LL_\eps^{\R^n_\gamma} u$ is measurable. Since $\R^n_\gamma$ is of class $C^3$, we can find an extension $\tilde{u} \in C^2_c(\R^n)$ with $u\vert_{\R^n_\gamma} = u$. In particular, $\tilde{u}$ can be chosen in such a way that
    \begin{equation}
        \label{EST:B:EXT}
        \norm{\tilde{u}}_{W^{2,p}(\R^n)} \le C \norm{u}_{W^{2,p}(\R^n_\gamma)}.
    \end{equation}    
    Using Proposition~\ref{PRP:WP:R^N}, we obtain the estimate
    \begin{align}\label{EST:B:1}
        \|\LL_\eps^{\R^n_\gamma} u\|_{L^p({\R^n_\gamma})}^p 
        &\leq C\|\LL_\eps^{\R^n}\tilde{u}\|_{L^p(\R^n)}^p + C\|\mathcal{R}_\eps\tilde{u}\|_{L^p({\R^n_\gamma})}^p \nonumber\\
        &\leq C\|u\|_{W^{2,p}({\R^n_\gamma})}^p + C\|\mathcal{R}_\eps\tilde{u}\|_{L^p({\R^n_\gamma})}^p,
    \end{align}    
    where the error term is given by 
	\begin{align*}
		\mathcal{R}_\eps \tilde{u}(x) \coloneqq \int_{({\R^n_\gamma})^c}J_\eps(x-y)\big( u(x)-\tilde{u}(y) \big) \dy\quad \text{ for a.e. }x\in\R^n_\gamma.
	\end{align*}
	Therefore, it remains to show that
	\begin{equation}
		 \label{EST:B:2}
		 \|\mathcal{R}_\eps\tilde{u}\|_{L^p({\R^n_\gamma})}^p 
		 \le C\|u\|_{W^{2,p}({\R^n_\gamma})}^p
	\end{equation}
    since \eqref{EST:LOM:CC2*} then follows by combining \eqref{EST:B:1} and \eqref{EST:B:2}.

	Since $\gamma\in C^{3}_b(\R^{n-1}) $, we infer from \cite[Lemma 2.1]{Schumacher2009} that there exists a $C^{2,1}$-diffeomorphism $F_\gamma:\R^n \to \R^n$ with $F_\gamma(\R^n_+) = \R^n_\gamma$, which satisfies
	\begin{equation}
		\label{COND:B:G:GAMMA}
		F_\gamma(x^\prime,0) = \begin{pmatrix} x^\prime \\ \gamma(x^\prime) \end{pmatrix}
		\quad\text{and}\quad
		\partial_{x_n}F_\gamma(x)|_{x_n=0} = -\mathbf{n}_{\partial{\R^n_\gamma}}(x^\prime,\gamma(x^\prime))
	\end{equation}
	for all $x'\in \R^{n-1}$.
    Since $F_\gamma \in C^{2,1}(\R^n;\R^n)$, we further have $DF_\gamma \in W^{2,\infty}(\R^n;\R^{n\times n})$ with
    \begin{equation}
        \label{EST:B:DFG}
        \norm{DF_\gamma}_{W^{2,\infty}}(\R^n;\R^{n\times n}) \le C \norm{\gamma}_{C^3_b(\R^{n-1})} \le C.
    \end{equation}  
    In the following, we write
	\begin{equation*}
		d_{F_\gamma} \coloneqq |\det DF_\gamma|
	\end{equation*}
	as an abbreviation.
    In particular, due to \eqref{EST:B:DFG}, we have
    \begin{equation}
        \label{EST:B:DET}
        \norm{d_{F_\gamma}}_{L^\infty(\R^n)} 
        \leq C.
    \end{equation}
	Moreover, assuming $\norm{\gamma}_{C^1_b(\R^{n-1})}$ to be sufficiently small, we can ensure that
	\begin{align}
        \label{Cond:B:Diffeo:an*}
		\sup_{x\in\R^n}|D F_\gamma(x) - I| \leq \frac 16 .
	\end{align}
	Now, by the change of variables by $x \mapsto F_\gamma(x)$, we infer that
	\begin{align}
		\label{EST:B:NL:1}
		&\|\mathcal{R}_\eps\tilde{u}\|_{L^p({\R^n_\gamma})}^p 
        =\int_{\R^n_\gamma}\left|\int_{(\R^n_\gamma)^c}J_\eps(x-y) \big(u(x)-\tilde{u}(y)\big) \dy\right|^p\dx
        \\ \nonumber
		&\quad=  \int_{\R^n_+}\left|\int_{\R^n_-}J_\eps\big( F_\gamma(x)-F_\gamma(y) \big)
		\Big(u(F_\gamma(x))-\tilde{u}(F_\gamma(y))\Big)\, d_{F_\gamma}(y)\dy\right|^p d_{F_\gamma}(x)\dx,
	\end{align}
    To simplify the notation, we introduce the functions
	\begin{alignat}{2}
        \label{DEF:B:W}
        w&: \R^n \to \R^n, &&\quad w(x) = \tilde{u}\big(F_\gamma(x)\big),
        \\
		\label{DEF:B:MG}
		G_\gamma&:\R^n\times\R^n\to \R^{n\times n}, &&\quad G_\gamma(x,y) \coloneqq \int_{0}^1 DF_\gamma(y + t(x-y))\dt.
	\end{alignat}
	In particular, this means that $G_\gamma(x,x) = DF_\gamma(x)$ for all $x\in\R^n$.
    Using the chain rule along with \eqref{EST:B:EXT}, \eqref{EST:B:DFG} and \eqref{EST:B:DET}, we deduce
    \begin{equation}
        \label{EST:B:WU}
        \norm{w}_{W^{2,p}(\R^n)} 
        \le C \norm{\tilde{u}}_{W^{2,p}(\R^n)}
        \le C \norm{u}_{W^{2,p}({\R^n_\gamma})}.
    \end{equation}
    Moreover, by means of the fundamental theorem of calculus, we obtain
	\begin{align*}
		F_\gamma(x) - F_\gamma(y) 
		=  G_\gamma(x,y) \, (x-y).
	\end{align*}
    
    Let now $x, \tilde{x}, y, \tilde{y} \in \R^n$ be arbitrary. 
    Recalling that $G_\gamma(\tilde{x},\tilde{x}) = DF_\gamma(\tilde{x})$, we infer from \eqref{Cond:B:Diffeo:an*} that
    \begin{equation}
        \label{EST:B:GG:1}
        \big| \big[ G_\gamma(\tilde{x},\tilde{x}) - I \big]  (x-y) \big| \le \frac 16 |x-y|.
    \end{equation}
    This implies that
    \begin{equation}
        \label{EST:B:GG:2}
        \big| G_\gamma(\tilde{x},\tilde{x})(x-y) \big|
        \ge \Big| |x-y| - \big| \big[ G_\gamma(\tilde{x},\tilde{x}) - I \big]  (x-y) \big| \Big| \ge \frac 56 |x-y|.
    \end{equation}
    Moreover, recalling the definition of $G_\gamma$ in \eqref{DEF:B:MG} and invoking once more \eqref{Cond:B:Diffeo:an*}, we deduce that
    \begin{align}
        \label{EST:B:GG:3}
        \begin{split}
        &\big| \big[ G_\gamma(\tilde{x},\tilde{y}) - G_\gamma(\tilde{x},\tilde{x}) \big]  (x-y) \big|
        \\
        &\quad \le \Big[ \big| G_\gamma(\tilde{x},\tilde{y}) - I \big| + \big| G_\gamma(\tilde{x},\tilde{x}) - I \big| \Big] \, |x-y|
        \le \frac 13 |x-y|.
        \end{split}
    \end{align}
    Combining \eqref{EST:B:GG:2} and \eqref{EST:B:GG:3}, we conclude that
    \begin{align}
        \label{EST:B:GG:4}
        \begin{split}
        &\big| G_\gamma(\tilde{x},\tilde{y})(x-y) \big|
        \\
        &\quad \ge \Big| \big| G_\gamma(\tilde{x},\tilde{x})(x-y) \big| - \big| \big[ G_\gamma(\tilde{x},\tilde{y}) - G_\gamma(\tilde{x},\tilde{x}) \big]  (x-y) \big| \Big|
        \ge \frac 12 |x-y|
        \end{split}
    \end{align}
    for all $x, \tilde{x}, y, \tilde{y} \in \R^n$.
    Next, using \eqref{DEF:B:MG}, we can reformulate \eqref{EST:B:NL:1} as 
    \begin{align}
        \label{ID:B:GG}
        \begin{split}
        &\|\mathcal{R}_\eps\tilde{u}\|_{L^p({\R^n_\gamma})}^p
        \\
	    &\quad=  \int_{\R^n_+}\left|\int_{\R^n_-}J_\eps\big( G_\gamma(x,y) (x-y)\big)
		\big(w(x)-w(y)\big)\, d_{F_\gamma}(y)\dy\right|^p d_{F_\gamma}(x)\dx.
        \end{split}
    \end{align}
    Now, we introduce the sets
    \begin{align}
        \label{DEF:AB}
        \mathcal A \coloneqq \R^{n-1} \times (0,2) \subset \R^n_+
        \quad\text{and}\quad
        \mathcal B \coloneqq \R^{n-1}\times (-2,0) \subset \R^n_- \,.
    \end{align}
    Hence, from \eqref{ID:B:GG} we infer that
    \begin{align}
        \label{EST:B:RE}
        \|\mathcal{R}_\eps\tilde{u}\|_{L^p({\R^n_\gamma})}^p
        \le C\big(I_\eps^1 + I_\eps^2 + I_\eps^3\big),
    \end{align}
    where
    \allowdisplaybreaks
    \begin{align*}
        I_\eps^1 &\coloneqq \int_{\R^n_+\setminus\mathcal A}
        \left|\int_{\mathcal B}J_\eps\big( G_\gamma(x,y)(x-y) \big)(w(x)-w(y)) \, d_{F_\gamma}(y) \dy\right|^p
        d_{F_\gamma}(x)\dx\,,
        \\
        I_\eps^2 &\coloneqq\int_{\R^n_+}\left|\int_{\R^n_-\setminus\mathcal B }J_\eps\big( G_\gamma(x,y)(x-y) \big)(w(x)-w(y))\,  d_{F_\gamma}(y) \dy\right|^p
        d_{F_\gamma}(x)\dx\,,
        \\
        I_\eps^3 &\coloneqq \int_{\mathcal A}\left|\int_{\mathcal B }J_\eps\big( G_\gamma(x,y)(x-y) \big)(w(x)-w(y))\, d_{F_\gamma}(y) \dy\right|^p
        d_{F_\gamma}(x)\dx\,.
    \end{align*}
    \allowdisplaybreaks[0]
    These integral terms will now be handled separately.

    \noindent\textit{Ad $I_\eps^1$:}
    We first observe that
    \begin{equation*}
        I_\eps^1 \leq C(I_\eps^{1,1} + I_\eps^{1,2}),
    \end{equation*}
    where
    \begin{align*}
        I_\eps^{1,1} &\coloneqq \int_{\R^n_+\setminus\mathcal A}
        \left(\int_{\mathcal B}J_\eps\big( G_\gamma(x,y)(x-y) \big)|w(x)| \, d_{F_\gamma}(y) \dy\right)^p
        \dx, \\
        I_\eps^{1,2} &\coloneqq \int_{\R^n_+\setminus\mathcal A}
        \left(\int_{\mathcal B}J_\eps\big( G_\gamma(x,y)(x-y) \big)|w(y)| \, d_{F_\gamma}(y) \dy\right)^p
        \dx.
    \end{align*}
    We further observe that for all $x \in \R^n_+\setminus \mathcal A$ and all $y \in \mathcal B$, we have
    \begin{align*}
        \abs{ x - y} 
        \ge \abs{ x_n -  y_n} 
        =  x_n -  y_n \ge 2.
    \end{align*}
    for all $x \in \R^n_+\setminus \mathcal A$ and all $y \in \mathcal B$.
    Hence, due to \eqref{EST:B:GG:4}, we have 
    \begin{equation}
        \label{EST:GG:B1}
        \big| G_\gamma(x,y)(x-y) \big| \ge \frac{1}{2}|x-y| \ge 1.
    \end{equation}
    Hence, using Assumption \ref{ASS:J} and applying the change of variables $y \mapsto z=\frac{x-y}{\eps}$, we deduce that
    \begin{align}\label{EST:I11}
    \begin{aligned}
        \int_{\mathcal B}\rho_\eps\big( G_\gamma(x,y)(x-y) \big)  \dy 
        &\leq C\eps^{-n}\int_{\mathcal B}\Big|\frac{x-y}{\eps}\Big|^{2-\alpha-n}\Big(1+\Big|\frac{x-y}{\eps}\Big|\Big)^{-N}  \dy \\
        &= C\int_{\R^n}|z|^{2-\alpha-n}(1+|z|)^{-N}\dz \leq C.
    \end{aligned}
    \end{align}
    By \eqref{EST:B:DET}, \eqref{EST:I11} and Hölder's inequality, we find that
    \begin{align*}
        I_\eps^{1,1} &\leq C\int_{\R^n_+\setminus\mathcal A}|w(x)|^p\left(\int_{\mathcal B}\rho_\eps\big( G_\gamma(x,y)(x-y) \big)  \dy\right)^p
        \dx
        \leq C\|w\|_{L^p(\R^n_+)}^p.
    \end{align*}
    Proceding similarly, we use \eqref{EST:B:DET}, \eqref{EST:I11} and Hölder's inequality to obtain
    \begin{align*}
        I_\eps^{1,2} 
        &\leq C\int_{\R^n_+\setminus\mathcal A}
        \left(\int_{\mathcal B}\rho_\eps\big( G_\gamma(x,y)(x-y) \big)\dy\right)^{p-1}
        \\
        &\qquad\qquad\qquad
        \cdot \left(\int_{\mathcal B}\rho_\eps\big( G_\gamma(x,y)(x-y) \big)|w(y)|^p \dy\right)\dx
        \\
        &\leq C\int_{\R^n_+\setminus\mathcal A}
        \int_{\mathcal B}\;\rho_\eps\big( G_\gamma(x,y)(x-y) \big)|w(y)|^p \dy\dx.
    \end{align*}
    Finally, recalling Assumption \ref{ASS:J}, using Fubini's theorem, and preoceeding similarly to \eqref{EST:I11}, we conclude
    \allowdisplaybreaks
    \begin{align*}
        I_\eps^{1,2} 
        &\leq C\eps^{-n}\int_{\R^n_+\setminus\mathcal A}
        \int_{\mathcal B}\;\Big|\frac{x-y}{\eps}\Big|^{2-\alpha-n}\Big(1+\Big|\frac{x-y}{\eps}\Big|\Big)^{-N}|w(y)|^p \dy\dx \\
        &\leq C\eps^{-n}\int_{\mathcal B}|w(y)|^p\left(\int_{\R^n_+\setminus\mathcal A}\Big|\frac{x-y}{\eps}\Big|^{2-\alpha-n}\Big(1+\Big|\frac{x-y}{\eps}\Big|\Big)^{-N}\dx\right)\dy \\
        &\leq C\|w\|_{L^p(\R^n)}^p.
    \end{align*}
    \allowdisplaybreaks[0]
    In summary, this shows
    \begin{align*}
        I_\eps^{1} \leq C\|w\|_{L^p(\R^n)}^p.
    \end{align*}

    \medskip
    
    \noindent\textit{Ad $I_\eps^2$:}
    Here, we notice that for all $x\in \R^n_+$ and all $y \in \R^n_-\setminus \mathcal B$, it holds
    \begin{align*}
        \abs{ x - y} 
        \ge \abs{ x_n -  y_n} 
        =  x_n -  y_n \ge 2.
    \end{align*}
    Hence, $I_\eps^2$ can be estimated in a similar manner as $I_\eps^1$. In this way, we conclude that 
    \begin{align*}
        I_\eps^{2} \leq C\|w\|_{L^p(\R^n)}^p.
    \end{align*}

    \medskip
    
    \noindent\textit{Ad $I_\eps^3$:}
    A straightforward computation yields
    \begin{equation}
        \label{EST:I3}
        I_\eps^3 
        \leq C\big( I^{3,2}_\eps + I^{3,1}_\eps + I^{3,3}_\eps \big)
    \end{equation}   
    where
    \begin{align*}    
        &I^{3,1}_\eps \coloneqq
        \int_{\mathcal A}\Bigg|\int_{\mathcal B}
        J_\eps(G_\gamma(x,x)(x-y))(w(x)-w(y))\, d_{F_\gamma}(x)\dy\Bigg|^p\dx
        \\
        &I^{3,2}_\eps \coloneqq
        \int_{\mathcal A}\Bigg|\int_{\mathcal B}J_\eps\big( G_\gamma(x,y)(x-y) \big)
         (w(x)-w(y))\big(d_{F_\gamma}(y) - d_{F_\gamma}(x)\big)\dy\Bigg|^p\dx 
        \\
        &\begin{aligned}
        I^{3,3}_\eps \coloneqq
        \int_{\mathcal A}\Bigg|\int_{\mathcal B}\Big(J_\eps\big( G_\gamma(x,y)(x-y) \big)-J_\eps(G_\gamma(x,x)(x-y))\Big)
        \qquad\quad\\
        \cdot \, (w(x)-w(y))\, d_{F_\gamma}(x)\dy\Bigg|^p\dx
        \end{aligned}
    \end{align*}
    These terms will be handled separately. 
    
    \noindent\textit{Ad $I^{3,1}_\eps$:}
    Due to the construction of $F_\gamma$, there exist continuously differentiable functions $U: \R^n \to \text{SO}(n)$, $H: \R^n \to \mathrm{GL}_n(\R)$ and $H': \R^n \to \mathrm{GL}_{n-1}(\R)$
    with
    \begin{align}
    \label{DEF:B:H}
        H(x) = 
        \left(\begin{array}{ccc|c}
		& & & 0 \\
		& H'(x) & & \vdots \\
		& & & 0 \\ \hline
		0 & \ldots & 0 & 1
		\end{array}\right)
        \quad\text{for all $x\in\R^n$}
    \end{align}
    such that
    \begin{align}
        \label{eq:B:DF}
		G_\gamma(x,x) = \text{D}F_\gamma(x) = U(x)H(x) 
        \quad\text{for all $x\in\R^n$.}
	\end{align} 
    For more details on this decomposition, we refer to \cite[Proof of Corollary~2]{AbelsTerasawa2009}. In particular, we have
    \begin{align}
        \label{INV:B:H}
        H(x)^{-1} = 
        \left(\begin{array}{ccc|c}
		& & & 0 \\
		& H'(x)^{-1} & & \vdots \\
		& & & 0 \\ \hline
		0 & \ldots & 0 & 1
		\end{array}\right)
        \quad\text{for all $x\in\R^n$}
    \end{align}
    and since $\det U(x) = 1$ for all $x\in\R^n$, it further holds that
    \begin{equation*}
        \det H(x) = d_{F_\gamma(x)}
        \quad\text{and}\quad
        \det \big( H(x)^{-1} \big) = \big(d_{F_\gamma(x)}\big)^{-1}
        \quad\text{for all $x\in\R^n$.}
    \end{equation*}
    Recalling $G_\gamma(x,x) = DF_\gamma(x)$, we now apply Taylor's theorem to derive the estimate
    \begin{align}
        \label{Est:B:HOm:1}
        I^{3,1}_\eps
        \leq C\big( I_\eps^{3,3,1} + I_\eps^{3,3,2} \big), 
    \end{align}
    where
    \begin{align*}
        I_\eps^{3,1,1}&\coloneqq \int_{\mathcal A}\left|\int_{\mathcal B}J_\eps\big( DF_\gamma(x)(x-y) \big)\nabla w(x)\cdot(x-y)d_{F_\gamma}(x)\dy\right|^p\dx, 
        \\
        I_\eps^{3,1,2}&\coloneqq \int_{\mathcal A}\left|\int_{\mathcal B}J_\eps\big( DF_\gamma(x)(x-y) \big)R_2(x,y)d_{F_\gamma}(x)\dy\right|^p\dx, 
        \nonumber
    \end{align*}
    and the error term is given by 
    \begin{align}
        \label{DEF:B:R2}
        R_2(x,y) = \sum_{|\beta|=2}\frac{2}{\beta!}\left(\int_0^1(1-t)D^\beta w\big( y+t(x-y) \big)\dt\right)(x-y)^\beta.
    \end{align}

    \noindent\textit{Ad $I^{3,1,1}_\eps$:}
    Using the change of variables $y\mapsto z = x-y$, then
    $z\mapsto \tilde{z} = H(x)z$, and finally $\tilde{z}\mapsto y = x - \tilde{z}$, the term $I_\eps^{3,1,1}$ can be reformulated as
    \allowdisplaybreaks
    \begin{align*}
        I_\eps^{3,1,1}
        &=\int_{\mathcal A}\left|\int_{\R^{n-1}\times(x_n,x_n+2)}
        J_\eps\big( U(x) H(x) z \big)\nabla w(x)\cdot z \dz \; d_{F_\gamma}(x) \right|^p  \dx 
        \\
        &= \int_{\mathcal A}\left|\int_{\R^{n-1}\times(x_n,x_n+2) }
        J_\eps\big( U(x) \tilde{z} \big) 
        \, H(x)^{-T}\nabla w(x)\cdot \tilde{z} \,\mathrm d\tilde{z}\right|^p\dx 
        \\
        &= \int_{\mathcal A}\left|\int_{\mathcal B}
        J_\eps\big( U(x) (x-y) \big) 
        \, H(x)^{-T}\nabla w(x)\cdot (x-y) \dy\right|^p\dx 
        \\
        &= \int_{\mathcal A}\left|H(x)^{-T}\nabla w(x)\cdot
        \int_{\mathcal B} 
        J_\eps\big( U(x)(x-y) \big)\, (x-y)\dy\right|^p\dx.
    \end{align*}
    \allowdisplaybreaks[0]
    Thus, exploiting the structure of $H(x)^{-T}$, we use Condition~\eqref{COND:J:2} to infer that
    \begin{align*}
        I_\eps^{3,1,1}
        &= \int_{\mathcal A}
        \left|\partial_{x_n} w(x)
        \int_{\mathcal B}J_\eps\big( U(x)(x-y) \big)(x_n-y_n)\dy\right|^p\dx.
    \end{align*}
    Since $u\in C^2_c(\overline{\R^n_\gamma})\cap W^{2,p}_I({\R^n_\gamma})$, we deduce by means of the chain rule and \eqref{COND:B:G:GAMMA} that 
    \begin{align}\label{eq:B:c_bc}
        \partial_{x_n}w(x',0) 
        = \partial_{x_n}\big(u \circ F_\gamma\big)(x',0) 
        = - \nabla u(x^\prime,\gamma(x^\prime)) \cdot \mathbf{n}\big(x^\prime,\gamma(x^\prime)\big) 
        = 0
    \end{align}
    for all $x'\in\R^{n-1}$. Moreover, we have
    \begin{align*}
        |x_n| \le |x_n - y_n|
        \quad\text{for all $x\in \mathcal A$
        and $y\in \mathcal B$.}
    \end{align*}
    Thus, invoking the fundamental theorem of calculus, we have
    \begin{equation*}
        \partial_{x_n}w(x)
        = \partial_{x_n}w(x) - \partial_{x_n}w(x',0)
        = \int_0^1 \partial_{x_n}^2 w(x',tx_n) \, x_n \dt
    \end{equation*}
    for all $x = (x',x_n) \in\R^{n}$.
    Hence, we obtain
    \begin{align}
        \label{EST:B:I311}
        I_\eps^{3,1,1}
        &= \int_{\mathcal A}\left|\big(\partial_{x_n}w(x)-\partial_{x_n}w(x^\prime,0)\big)
        \int_{\mathcal B} \!\! J_\eps\big( U(x)(x-y) \big) \, (x_n-y_n)\dy\right|^p \!\!\mathrm dx 
        \nonumber\\
        &= \int_{\mathcal A}\left|\int_0^1
        \int_{\mathcal B}J_\eps\big( U(x)(x-y) \big)
        \, \partial_{x_n}^2w(x^\prime,tx_n) \, x_n \, (x_n-y_n)\dy\dt\right|^p\dx 
        \nonumber\\
        &\leq \int_{\mathcal A}\left(\int_0^1
        \int_{\mathcal B}\rho_\eps\big( U(x)(x-y) \big)\,
        \big| \partial_{x_n}^2w(x^\prime,tx_n) \big|\dy\dt\right)^p\dx 
        \nonumber\\
        &= \int_{\mathcal A}\left[ \left(\int_0^1\big| \partial_{x_n}^2w(x^\prime,tx_n) \big|\dt\right)
        \left(\int_{\mathcal B}\rho_\eps\big( U(x)(x-y) \big)\dy\right) \right]^p\dx 
    \end{align}
    Applying the change of variables $y\mapsto z = U(x)(x-y)$ along with Lemma~\ref{LEM:DIRAC}\ref{LEM:DIRAC:A}, we deduce that 
    \begin{align}
        \label{ID:RHO:U}
        &\int_{\mathcal B}\rho_\eps\big( U(x)(x-y) \big)\dy
        \le \int_{\R^n}\rho_\eps(z) \dz = \norm{\rho}_{L^1(\R^n)}.
    \end{align}
    Hence, recalling the definition of $\mathcal A$ and using the change of variables $t\mapsto s=tx_n$, we get 
    \begin{align}
        I_\eps^{3,1,1}
        \le C \int_{\R^{n-1}}  \int_0^\infty 
        \left(\frac{1}{x_n}\int_0^{x_n}|\partial_{x_n}^2w(x^\prime,s)|\ds\right)^p \, \mathrm dx_n \dx' 
    \end{align}
    Finally, applying Hardy's inequality, we conclude that
    \begin{align}
        \label{EST:I311}
        I_\eps^{3,1,1} 
        \leq C \left( \frac{p}{p-1} \right)^p \int_{\R^{n-1}}\int_0^\infty|\partial_{x_n}^2w(x)|^p\dx_n\dx^\prime  
        \leq C\|w\|_{W^{2,p}(\R^n_+)}^p.
    \end{align}

   
    \noindent\textit{Ad $I^{3,1,2}_\eps$:}
    Recalling the definition of $R_2$ (see~\eqref{DEF:B:R2}) and \eqref{EST:B:GG:4}, we use Hölder's inequality and Lemma~\ref{LEM:DIRAC}\ref{LEM:DIRAC:A} to obtain
    \begin{align}
        \label{EST:I312:0}
        I^{3,1,2}_\eps 
        &= \int_{\mathcal A}
        \left|\int_{\mathcal B}
        J_\eps\big( DF_\gamma(x)(x-y) \big) \, R_2(x,y) \, d_{F_\gamma}(x)\dy
        \right|^p\dx 
        \nonumber\\[1ex]
        &\leq C \sum_{|\beta|=2}  \int_{\mathcal A}
        \left(\int_0^1 \int_{\mathcal B}
        \rho_\eps\big( DF_\gamma(x)(x-y) \big) \,  |D^\beta w\big( y+t(x-y) \big)| \dy \dt 
        \right)^p\dx 
        \nonumber\\[1ex]
        &\leq C \sum_{|\beta|=2} \int_{\mathcal A}
        \left(
        \int_{\mathcal B}
        \rho_\eps\big( DF_\gamma(x)(x-y) \big) \dy
        \right)^{p-1}
        \\[-0.5ex]
        &\qquad\qquad\qquad \cdot \left(
        \int_0^1 \int_{\mathcal B}
        \rho_\eps\big( DF_\gamma(x)(x-y) \big)
        \, |D^\beta w\big( y+t(x-y) \big)|^p \dy \dt 
        \right) \dx. 
        \nonumber
    \end{align}
    Using the change of variables $y\mapsto DF_\gamma(x)(x-y)$ as well as Lemma~\ref{LEM:DIRAC}\ref{LEM:DIRAC:A}, we deduce that
    \begin{align*}
        \int_{\mathcal B}
        \rho_\eps\big( DF_\gamma(x)(x-y) \big) \dy
        \le 
        \int_{\R^n}
        \rho_\eps(z) \dz
        =
        \norm{\rho}_{L^1(\R^n)}.
    \end{align*}
    Consequently, we have
    \begin{align*}
        I^{3,1,2}_\eps 
        \leq C \sum_{|\beta|=2} \int_{\mathcal A}  \int_0^1 \int_{\mathcal B}
        \rho_\eps\big( DF_\gamma(x)(x-y) \big) \, |D^\beta w\big( y+t(x-y) \big)|^p \dy\dt\dx.
    \end{align*}    
    We now apply the change of variables 
    \begin{equation}
        \label{COV:XIETA}
        \R^{2n}\ni
        \begin{pmatrix}
            x \\ y
        \end{pmatrix}
        \mapsto
        \begin{pmatrix}
            \xi \\ \eta
        \end{pmatrix}
        \coloneqq
        \begin{pmatrix}
            x-y \\ y + t (x-y)
        \end{pmatrix}
        \in\R^{2n}.
    \end{equation}
    Note that 
    \begin{equation*}
        \det\Big( D_{(x,y)} 
        \left(
        \begin{smallmatrix}
            \xi \\ \eta
        \end{smallmatrix}
        \right)
        \Big)
        = \det
        \begin{pmatrix}
            I & -I \\
            tI & (1-t) I 
        \end{pmatrix}
        = 1.
    \end{equation*}
    In this way, we obtain
    \begin{align}
        \label{EST:I312:1}
        I^{3,1,2}_\eps 
        \leq C  \int_0^1 \sum_{|\beta|=2} \int_{\R^n} 
        \left( \int_{\R^n}
        \rho_\eps\Big( DF_\gamma\big(\eta + (1-t) \xi \big)\xi \Big) \,\mathrm d\xi 
        \right)
        \, |D^\beta w(\eta)|^p 
        \,\mathrm d\eta  \dt.
    \end{align}
    Applying \eqref{EST:B:GG:4}, we deduce that
    \begin{equation}
        \label{EST:I312:2}
        \Big| DF_\gamma\big(\eta + (1-t) \xi \big)\xi \Big| \ge \frac 12 |\xi|
    \end{equation}
    for all $\xi\in\R^n$ and $t\in(0,1)$. 
    Invoking Assumption~\ref{ASS:J} and applying the change of variables $\xi\mapsto \zeta =  \xi/\eps$, we infer that
    \allowdisplaybreaks
    \begin{align}
        \label{EST:I312:4}
        &\int_{\R^n} \rho_\eps\Big( DF_\gamma\big(\eta + (1-t) \xi \big)\xi \Big) \,\mathrm d\xi
        = 
        \int_{\R^n} \rho_\eps\Big( DF_\gamma\big(\eta + (1-t) \xi \big)\xi \Big) \,\mathrm d\xi
        \nonumber\\
        &\quad
        \le C\eps^{-n} \int_{\R^n}
        \Big|\frac{\xi}{\eps}\Big|^{2-\alpha-n} 
        \Big( 1 + \Big|\frac{\xi}{\eps}\Big| \Big)^{-N}\,\mathrm d\xi
        \nonumber\\
        &\quad
        = C\int_{\R^n}
        |\zeta|^{2-\alpha-n} 
        ( 1 + |\zeta| )^{-N}\,\mathrm d\zeta
        \le C.
    \end{align}
    \allowdisplaybreaks[0]
    Hence, in view of \eqref{EST:I312:1}, we obtain
    \begin{align*}
        \label{EST:I312}
        I^{3,1,2}_\eps 
        \leq C \sum_{|\beta|=2} \int_{\R^n} |D^\beta w(z)|^p \dz
        \leq C \|w\|_{W^{2,p}(\R^n)}^p.
    \end{align*}
    
    In summary, recalling \eqref{Est:B:HOm:1}, we finally conclude that
    \begin{equation}
        \label{EST:I31}
        I^{3,1}_\eps \leq C \|w\|_{W^{2,p}(\R^n)}^p.
    \end{equation}

    \noindent\textit{Ad $I^{3,2}_\eps$:}
    It follows from \eqref{EST:B:DFG} that $d_{F_\gamma}$ is Lipschitz continuous. Hence, using the fundamental theorem of calculus and Hölder's inequality, we deduce that
    \begin{align}
        \label{EST:I32:0}
        I^{3,2}_\eps 
        &=\int_{\mathcal A}\Bigg|\int_{\mathcal B} \!\! J_\eps\big( G_\gamma(x,y)(x-y) \big)
         (w(x)-w(y))\big(d_{F_\gamma}(y) - d_{F_\gamma}(x)\big)\dy\Bigg|^p \!\!\!\dx 
        \nonumber \\
        &\leq C\int_{\mathcal A}\left(\int_{\mathcal B}\int_0^1\rho_\eps\big( G_\gamma(x,y)(x-y) \big)\big|\nabla w\big( y+t(x-y) \big) \big|\dt \dy\right)^p\dx
        \nonumber \\
        &\leq C \int_{\mathcal A}
        \left(
        \int_{\mathcal B}
        \rho_\eps\big( G_\gamma(x,y)(x-y) \big) \dy
        \right)^{p-1}
        \\
        & \qquad\qquad
        \cdot \left(
        \int_0^1 \int_{\mathcal B}
        \rho_\eps\big( G_\gamma(x,y)(x-y) \big)
        \, \big|\nabla w\big( y+t(x-y) \big) \big|^p \dy \dt 
        \right) \dx 
        \nonumber
    \end{align}
    Applying the change of variables $y\mapsto z=x-y$, we obtain
    \begin{equation}
        \label{EST:I32:1}
        \int_{\mathcal B} \rho_\eps \big( G_\gamma(x,y)(x-y) \big) \dy
        \le \int_{\R^n} \rho_\eps \big( G_\gamma(x,x-z) z \big) \dz.
    \end{equation}
    Hence, proceeding similarly to \eqref{EST:I312:4}, we infer that
    \begin{align}
        \label{EST:I32:3}
        &\int_{\mathcal B} \rho_\eps \big( G_\gamma(x,y)(x-y) \big) \dy
        \le C.
    \end{align}
    This implies that 
    \begin{align*}
        I^{3,2}_\eps 
        \leq C \int_0^1 \int_{\R^n} \int_{\R^n}
        \rho_\eps\big( G_\gamma(x,y)(x-y) \big)
        \, \big|\nabla w\big( y+t(x-y) \big) \big|^p \dy \dx \dt 
    \end{align*}
    Applying the change of variables \eqref{COV:XIETA} and proceeding similarly to \eqref{EST:I312:1}--\eqref{EST:I312:4}, we finally conclude that
    \begin{equation}
        \label{EST:I32}
        I^{3,2}_\eps \leq C \|w\|_{W^{2,p}(\R^n)}^p.
    \end{equation}

    \noindent\textit{Ad $I^{3,3}_\eps$:} In order to estimate $I^{3,3}_\eps$, we first notice that the fundamental theorem of calculus yields
    \begin{align}
        \label{EST:DIFF:J}
        \begin{split}
        &J_\eps\big( G_\gamma(x,y)(x-y) \big)-J_\eps(G_\gamma(x,x)(x-y)) \\
        &\quad = \int_0^1 \nabla J_\eps(z_s) \cdot (G_\gamma(x,y)-G_\gamma(x,x))(x-y)\ds,
        \end{split}
    \end{align}
    for all $x,y\in\R^n$, where 
    \begin{align}
        \label{DEF:ZS}
        z_s = z_s(x,y)
        \coloneqq G_\gamma(x,x) (x-y)
            + s\big[ G_\gamma(x,y) - G_\gamma(x,x) \big] (x-y) 
    \end{align}
    for all $s\in [0,1]$. 
    Recalling the definition of $G_\gamma$ in \eqref{DEF:B:MG} and using the Lipschitz continuity of $DF_\gamma$, which follows from \eqref{EST:B:DFG}, we find that
    \begin{align}
        \label{EST:LIP:G}
        \big| G_\gamma(x,y)-G_\gamma(x,x) \big| \le C |x-y|
    \end{align}
    for all $x,y\in\R^n$.
    Plugging this estimate into \eqref{EST:DIFF:J}, we infer that
    \begin{align}
        \label{EST:DIFF:J*}
        \big| J_\eps\big( G_\gamma(x,y)(x-y) \big)-J_\eps(G_\gamma(x,x)(x-y)) \big| 
        \le C\int_0^1 |\nabla J_\eps(z_s)| \, |x-y|^2 \ds
    \end{align}
    for all $x,y\in\R^n$.
    Combining \eqref{EST:B:GG:2} and \eqref{EST:B:GG:3}, we further deduce that
    \begin{align}
        \label{EST:ZT}
        \begin{split}
        |z_s| 
        &\ge \Big| \big| G_\gamma(x,x) (x-y) \big|
            - s \big|\big[ G_\gamma(x,y) - G_\gamma(x,x) \big] (x-y) \big| \Big|
        \\[0.5ex]
        &\ge \frac 56 |x-y| - \frac 13 s \,|x-y|
        \ge \frac 12 |x-y|
        \end{split}
    \end{align}
    for all $x,y\in\R^n$.
    Using the chain rule, we derive the estimate
    \begin{align*}
        |\nabla J_\eps(x)| \le \frac{|\nabla \rho_\eps(x)|}{|x|^2} + 2 \frac{|\rho_\eps(x)|}{|x|^3}
    \end{align*}
    for all $x\in\R^n\setminus\{0\}$. In view of \eqref{EST:ZT}, we thus obtain
    \begin{align}
    \label{EST:JZS}
        |\nabla J_\eps(z_s)| \le C \frac{|\nabla \rho_\eps(z_s)|}{|x-y|^2} + C \frac{|\rho_\eps(z_s)|}{|x-y|^3}
    \end{align}
    for all $x,y\in\R^n\setminus\{0\}$.
    Invoking the fundamental theorem of calculus, we further have
    \begin{align}
        \label{EST:DIFF:W}
        w(x) - w(y) = \int_0^1 \nabla w\big( y+t(x-y) \big)\dt \cdot (x-y)
    \end{align}
    for all $x,y\in\R^n$. Combining \eqref{EST:DIFF:J*}, \eqref{EST:JZS} and \eqref{EST:DIFF:W}, we now conclude that
    \begin{align*}
        I^{3,3}_\eps 
        &\le C\int_{\mathcal A}\Bigg|\int_{\mathcal B} \int_0^1 \int_0^1
        |\nabla J_\eps(z_s)| \big| \nabla w\big( y+t(x-y) \big) \big| |x-y|^3 \ds \dt \dy \Bigg|^p \dx.
        \\[1ex]
        &
        \begin{aligned}
        \;\le C\int_{\mathcal A} \Bigg(\int_{\mathcal B} \int_0^1 \int_0^1
        &\big[ |\rho_\eps(z_s)| + |\nabla \rho_\eps(z_s)|\, |x-y| \big] 
        \\
        &\cdot \big| \nabla w\big( y+t(x-y) \big) \big| \ds \dt \dy \Bigg)^p \dx
        \end{aligned}
    \end{align*}
    Hence, by means of Hölder's inequality, we obtain
    \begin{align}
        \label{EST:I33:0}
        \begin{split}
        I^{3,3}_\eps 
        \le C\int_{\mathcal A} \Bigg(\int_{\mathcal B} \int_0^1 
        \big[ |\rho_\eps(z_s)| + |\nabla \rho_\eps(z_s)|\, |x-y| \big] \ds \dy \Bigg)^{p-1}&
        \\
        \cdot \Bigg(\int_{\mathcal B} \int_0^1 \int_0^1 
        \big[ |\rho_\eps(z_s)| + |\nabla \rho_\eps(z_s)|\, |x-y| \big] \qquad&
        \\
        \cdot \big| \nabla w\big( y+t(x-y) \big) \big|^p 
        \ds \dt \dy \Bigg) &\dx
        \end{split}
    \end{align}
    Due to \eqref{EST:ZT} and Condition~\eqref{COND:RHO:2} from Assumption~\ref{ASS:J}, we proceed as in the estimates for $I^{3,1,2}_\eps$ and $I^{3,2}_\eps$ to conclude that
    \begin{equation}
        \label{EST:I33}
        I^{3,3}_\eps \leq C \|w\|_{W^{2,p}(\R^n)}^p.
    \end{equation}

    Now, combining \eqref{EST:I31}, \eqref{EST:I32} and \eqref{EST:I33}, we infer from \eqref{EST:I3} that
    \begin{equation}
        \label{EST:I3*}
        I^{3}_\eps \leq C \|w\|_{W^{2,p}(\R^n)}^p.
    \end{equation}
    
    Since $I^1_\eps = I^2_\eps = 0$, we conclude from \eqref{EST:B:RE} and \eqref{EST:B:WU} that 
    \begin{align}
        \label{EST:B:RE*}
        \|\mathcal{R}_\eps\tilde{u}\|_{L^p({\R^n_\gamma})}^p
        \leq C \|w\|_{W^{2,p}(\R^n)}^p
        \leq C \|u\|_{W^{2,p}({\R^n_\gamma})}^p.
    \end{align}
    In view of \eqref{EST:B:1}, this means that \eqref{EST:LOM:CC2*} is finally verified.

    \medskip
    \noindent
    \textbf{Step~2: Proof of Estimate~\eqref{CONV:CHS} for $u\in C^3_c(\overline{{\R^n_\gamma}}) \cap W^{3,p}_I({\R^n_\gamma})$.}\\
    Now, let $u\in C^3_c(\overline{{\R^n_\gamma}}) \cap W^{3,p}_I({\R^n_\gamma})$.
    Our goal is to show that there exists a constant $C_\gamma>0$ depending only on ${\R^n_\gamma}$, $\rho$ and $p$ such that 
    \begin{equation*}
		\|\LL_\eps^{\R^n_\gamma} u + \Delta u\|_{L^p({\R^n_\gamma})} 
        \leq C_\gamma\, \sqrt[p]{\eps}\, \|u\|_{W^{3,p}({\R^n_\gamma})}.
    \end{equation*}
    Since ${\R^n_\gamma}$ is of class $C^3$, we can find an extension $\tilde{u} \in C^3_c(\R^n)$ with $u\vert_{\R^n_\gamma} = u$. In particular, $\tilde{u}$ can be chosen in such a way that
    \begin{equation}
        \label{EST:C:EXT}
        \norm{\tilde{u}}_{W^{3,p}(\R^n)} \le C \norm{u}_{W^{3,p}({\R^n_\gamma})}.
    \end{equation}    
    By means of Theorem~\ref{THM:CONV:R^N}, we derive the estimate
	\begin{align}
		\label{EST:C:1}
		\|\LL^{\R^n_\gamma}_\eps u + \Delta u\|_{L^p({\R^n_\gamma})}^p 
		&\leq C \|\LL^{\R^n}_\eps \tilde{u} + \Delta \tilde{u}\|_{L^p(\R^n)}^p 
		+ \|\mathcal{R}_\eps\tilde{u}\|_{L^p({\R^n_\gamma})}^p
		\nonumber\\
		&\leq C\eps^p\|\tilde{u}\|_{W^{3,p}(\R^n)}^p + \|\mathcal{R}_\eps\tilde{u}\|_{L^p({\R^n_\gamma})}^p
		\nonumber\\
		&\leq C\eps^p\|u\|_{W^{3,p}({\R^n_\gamma})}^p + \|\mathcal{R}_\eps\tilde{u}\|_{L^p({\R^n_\gamma})}^p,
	\end{align}
	where the error term is given by 
	\begin{align*}
		\mathcal{R}_\eps \tilde u(x) \coloneqq \int_{{\R^n_\gamma}^c}J_\eps(x-y)\big( u(x)-\tilde{u}(y) \big) \dy\quad \text{ for all $x\in{\R^n_\gamma}$}.
	\end{align*}
	Therefore, it remains to show that
	\begin{equation}
		 \label{EST:C:2}
		 \|\mathcal{R}_\eps\tilde{u}\|_{L^p({\R^n_\gamma})}^p 
		 \le C\eps\|u\|_{W^{3,p}({\R^n_\gamma})}^p
	\end{equation}
    since \eqref{EST:LOM:CC2*} then follows by combining \eqref{EST:B:1} and \eqref{EST:B:2}.
    To this end, we define the radius $R>0$, the functions $F_\gamma$, $G_\gamma$ and $w$ and the sets $\mathcal A$ and $\mathcal B$ as in Step~1.
    In particular, using the chain rule along with \eqref{EST:C:EXT}, \eqref{EST:B:DFG}and \eqref{EST:B:DET}, we deduce
    \begin{equation}
        \label{EST:C:WU} 
        \norm{w}_{W^{3,p}(\R^n)} 
        \le C \norm{\tilde{u}}_{W^{3,p}(\R^n)}
        \le C \norm{u}_{W^{3,p}({\R^n_\gamma})}.
    \end{equation}
    Moreover, recalling \eqref{EST:B:RE}, we have
    \begin{align}
        \label{EST:C:RE}
        \|\mathcal{R}_\eps\tilde{u}\|_{L^p({\R^n_\gamma})}^p \le C (I^1_\eps+I^2_\eps+I^3_\eps),
    \end{align}
    where $I^1_\eps,I^2_\eps,I^3_\eps$ are the integral terms introduced in Step~1.
    Now, as $N> 3-\alpha$ and higher regularity of $u$ is assumed, we intend to 
    improve the estimates from Step~1 for $I^{1}_\eps$, $I^{2}_\eps$ and $I^{3}_\eps$ such that the desired rate with respect to $\eps$ is obtained.

    \noindent\textit{Ad $I^1_\eps$:} 
    Recalling \eqref{EST:GG:B1}, it holds that  
    \begin{equation*}
        \big| G_\gamma(x,y)(x-y) \big| \ge \frac{1}{2}|x-y| \ge 1
    \end{equation*}
    for all $x \in \R^n_+\setminus \mathcal A$ and all $y \in \mathcal B$.
    Let $I_\eps^{1,1}$ and $I_\eps^{1,2}$ be defined as in Step~1.
    Using Assumption \ref{ASS:J}, we deduce that
    \begin{align}\label{EST:I11'}
        \int_{\mathcal B}\rho_\eps\big( G_\gamma(x,y)(x-y) \big)  \dy 
        &\leq C\eps\int_{\mathcal B}\rho_\eps\big( G_\gamma(x,y)(x-y) \big)\Big|\frac{x-y}{\eps}\Big|  \dy 
        \nonumber\\
        &\leq C\eps^{-n+1}\int_{\mathcal B}\Big|\frac{x-y}{\eps}\Big|^{3-\alpha-n}\Big(1+\Big|\frac{x-y}{\eps}\Big|\Big)^{-N}  \dy 
        \nonumber\\
        &= C\eps\int_{\R^n}|z|^{3-\alpha-n}(1+|z|)^{-N}\dz \leq C\eps,
    \end{align}
    since $N> 3-\alpha$.
    Consequently, we have
    \begin{align*}
        I_\eps^{1,1} &\leq C\int_{\R^n_+\setminus\mathcal A}|w(x)|^p\left(\int_{\mathcal B}\rho_\eps\big( G_\gamma(x,y)(x-y) \big)  \dy\right)^p
        \dx 
        \leq C\eps^p\|w\|_{L^p(\R^n_+)}^p.
    \end{align*}  
    To estimate $I^{1,2}_\eps$, we first use Hölder's inequality to obtain
    \begin{align*}
        I_\eps^{1,2} \leq C\int_{\R^n_+\setminus\mathcal A}
        &\left(\int_{\mathcal B}\rho_\eps\big( G_\gamma(x,y)(x-y) \big)\dy\right)^{p-1}
        \\
        &\cdot \left(\int_{\mathcal B}\rho_\eps\big( G_\gamma(x,y)(x-y) \big)|w(y)|^p \dy\right)\dx.
    \end{align*}
    Employing \eqref{EST:I11'}, we infer that 
    \begin{align*}
       I_\eps^{1,2} \leq C\eps^{1-\tfrac{1}{p}}\int_{\R^n_+\setminus\mathcal A}
        \left(\int_{\mathcal B}\rho_\eps\big( G_\gamma(x,y)(x-y) \big)|w(y)|^p \dy\right)\dx.
    \end{align*}
    Furthermore, recalling $N>3-\alpha$, Assumption \ref{ASS:J}, using Fubini's theorem, and proceeding similarly to \eqref{EST:I11'}, we deduce that
    \begin{align*}
        &\int_{\mathcal B}\rho_\eps\big( G_\gamma(x,y)(x-y) \big)|w(y)|^p \dy \\
        &\quad\leq C\eps^{-n+1}\int_{\R^n_+\setminus\mathcal A}
        \left(\int_{\mathcal B}\Big|\frac{x-y}{\eps}\Big|^{3-\alpha-n}\Big(1+\Big|\frac{x-y}{\eps}\Big|\Big)^{-N}|w(y)|^p \dy\right)\dx \\
        &\quad\leq C\eps^{-n+1}\int_{\mathcal B}|w(y)|^p\left(\int_{\R^n_+\setminus\mathcal A}\Big|\frac{x-y}{\eps}\Big|^{3-\alpha-n}\Big(1+\Big|\frac{x-y}{\eps}\Big|\Big)^{-N}\dx\right)\dy \\
        &\quad\leq C\eps\|w\|_{L^p(\R^n)}^p.
    \end{align*}
    Combining these estimates, we arrive at
    \begin{align*}
        I_\eps^{1,2} \leq C\eps\|w\|_{L^p(\R^n)}^p
    \end{align*}
    Altogether, this shows
    \begin{align}\label{EST:C:R1}
        I_\eps^{1} \leq C\eps\|w\|_{L^p(\R^n)}^p.
    \end{align}

    \noindent\textit{Ad $I^{2}_\eps$:} Arguing in a similar manner as for $I^{1}_\eps$, we obtain
    \begin{align}\label{EST:C:R2}
        I_\eps^{2} \leq C\eps\|w\|_{L^p(\R^n)}^p.
    \end{align}

    \noindent\textit{Ad $I^{3}_\eps$:} To derive a bound on $I^{3}_\eps$, we recall from Step~1 that
    \begin{align}
        I^{3}_\eps \le C\big( I^{3,1,1}_\eps + I^{3,1,2}_\eps + I^{3,2}_\eps + I^{3,3}_\eps \big).
    \end{align}
    Here, the integral terms on the right-hand side are defined as in Step~1 and will be handled separately.
    
    \noindent\textit{Ad $I^{3,1,1}_\eps$:} According to \eqref{EST:B:I311}, we have
    \begin{align*}
        I_\eps^{3,1,1}
        = C \int_{\mathcal A}\Bigg[ \left(\int_0^1\big| \partial_{x_n}^2w(x^\prime,tx_n) \big|\dt\right)
        \cdot \left(\int_{\mathcal B}\rho_\eps\big( U(x)(x-y) \big)\dy\right) \Bigg]^p\dx .
    \end{align*}   
    Recalling the definition of $\mathcal A$ and $\mathcal B$, we infer that
    \begin{align}
        \label{EST:C:I311:1}
        \begin{split}
        I_\eps^{3,1,1}
        \le C &\int_{\R^{n-1}} \norm{ \partial_{x_n}^2w(x^\prime,\cdot) }_{L^\infty((0,2))}^p
        \\
        &\quad\cdot \int_{0}^{2} \left(\int_{\R^{n-1}} \int_{-2}^0 \rho_\eps\big( U(x)(x-y) \big)
        \,\mathrm dy_n \dy' \right)^p  
        \,\mathrm dx_n \dx' .
        \end{split}
    \end{align}  
    
    Recalling the definition of $\rho_\eps$, we use the changes of variables ${y\mapsto z=x-y}$, ${z\mapsto \eps z}$, ${x_n\mapsto \eps x_n}$ and ${z\mapsto y = x-z}$ to compute
    \allowdisplaybreaks
    \begin{align}
        \label{EST:RATE:EPS}
        &\int_{0}^{2} \left(\int_{\R^{n-1}} \int_{-2}^{0} \rho_\eps\big( U(x)(x-y) \big)
        \,\mathrm dy_n \dy' \right)^p  
        \,\mathrm dx_n 
        \nonumber\\
        &\quad 
        = \int_{0}^{2} \left(\eps^{-n} \int_{\R^{n-1}} \int_{x_n}^{x_n + 2} \rho\big( U(x)\tfrac{z}{\eps} \big)
        \,\mathrm dz_n \dz' \right)^p  
        \,\mathrm dx_n 
        \nonumber\\
        &\quad 
        = \int_{0}^{2} \left( \int_{\R^{n-1}} \int_{x_n/\eps}^{(x_n + 2)/\eps} \rho\big( U(x)z \big)
        \,\mathrm dz_n \dz' \right)^p  
        \,\mathrm dx_n  
        \nonumber\\
        &\quad 
        = \eps \int_{0}^{2/\eps} \left( \int_{\R^{n-1}} \int_{x_n}^{x_n + (2/\eps)} \rho\big( U(x',\eps x_n)z \big)
        \,\mathrm dz_n \dz' \right)^p  
        \,\mathrm dx_n  
        \nonumber\\
        &\quad 
        \le \eps \int_{0}^{\infty} \left( \int_{\R^{n-1}} \int_{x_n}^{x_n + (2/\eps)} \rho\big( U(x',\eps x_n)z \big)
        \,\mathrm dz_n \dz' \right)^p  
        \,\mathrm dx_n  
        \nonumber\\
        &\quad 
        = \eps \int_{0}^{\infty} \left( \int_{\R^{n-1}} \int_{-2/\eps}^{0} \rho\big( U(x',\eps x_n)(x-y)) \big)
        \,\mathrm dy_n \dy' \right)^p  
        \,\mathrm dx_n  \,.
    \end{align}
    \allowdisplaybreaks[0]
    for all $x'\in \R^{n-1}$.
    Since $U(z) \in \mathrm{SO}(n)$ for all $z\in\R^n$, we know that 
    \begin{equation*}
        \big| U(x',\eps x_n)(x-y) \big| = |x-y|
        \quad\text{for all $x,y\in\R^n$.}
    \end{equation*}
    Furthermore, we notice that for any $x, y \in\R^n$ with $x_n\ge 0$ and $y_n\le 0$, it holds that
    \begin{align*}
        |x-y| \ge |x_n - y_n| = x_n + |y_n| \ge x_n.
    \end{align*}
    Moreover, since $N> 3-\alpha$, we can find a $\delta>0$ such that $N> 3-\alpha+\delta$.
    Based on \eqref{EST:RATE:EPS}, we deduce that 
    \begin{align}
        \label{EST:AEPS}
        &\int_{0}^{2} \left(\int_{\R^{n-1}} \int_{-2}^{0} \rho_\eps\big( U(x)(x-y) \big)
        \,\mathrm dy_n \dy' \right)^p  
        \,\mathrm dx_n
        \nonumber\\
        &\quad
        \le  \eps\int_{0}^{\infty} \left( \int_{\R^{n-1}} \int_{-\infty}^{0} \rho\big( U(x',\eps x_n)(x-y)) \big)
        \,\mathrm dy_n \dy' \right)^p  
        \,\mathrm dx_n  
        \nonumber\\
        &\quad
        \le C \eps\int_{0}^{\infty} \left( \int_{\R^{n-1}} \int_{-\infty}^{0} 
        |x-y|^{2-\alpha-n} (1+|x-y|)^{-N} 
        \,\mathrm dy_n \dy' \right)^p  
        \,\mathrm dx_n 
        \nonumber\\
        &\quad
        \le C \eps\int_{0}^{\infty} \Bigg( \int_{\R^{n-1}} \int_{-\infty}^{0} 
        |x-y|^{2-\alpha-n} (1+|x-y|)^{-N+1+\delta} 
        \nonumber\\[-1ex]
        &\qquad\qquad\qquad\qquad\qquad\qquad\qquad\qquad
        \cdot (1+x_n)^{-(1+\delta)} 
        \,\mathrm dy_n \dy' \Bigg)^p  
        \,\mathrm dx_n
        \nonumber\\
        &\quad
        \le 
        C \eps \left( \int_{\R^{n}}
        |y|^{2-\alpha-n} (1+|y|)^{-N+1+\delta}
        \,\mathrm dy \right)^p  
        \int_{0}^{\infty} (1+x_n)^{-(1+\delta)p}
        \,\mathrm dx_n
        \;\le C\eps
    \end{align} 
    for all $x'\in \R^{n-1}$.
    Here, the integral with respect to $y$ exists since $N>3-\alpha+\delta$, and the integral with respect to $x_n$ exists since $-(1+\delta)p<-1$.
    Plugging this estimate into \eqref{EST:C:I311:1}, we infer that 
    \begin{align*}
        I_\eps^{3,1,1}
        \le C\eps \int_{\R^{n-1}} \norm{ \partial_{x_n}^2w(x^\prime,\cdot) }_{L^\infty((0,2))}^p \dx' .
    \end{align*}
   As the embedding $W^{1,p}((0,2)) \hookrightarrow L^\infty((0,2))$ is continuous, this implies that
    \begin{align}
        \label{EST:C:I311}
        \begin{split}
        I_\eps^{3,1,1}
        &\le C\eps \int_{\R^{n-1}} \norm{ \partial_{x_n}^2w(x^\prime,\cdot) }_{W^{1,p}((0,2))}^p \dx' 
        \le C\eps \, \norm{w}_{W^{3,p}(\R^n)}^p \,.
        \end{split}
    \end{align}

    \noindent\textit{Ad $I^{3,1,2}_\eps$:}
    We already know from \eqref{EST:B:GG:4} that
    \begin{equation}
        \label{EST:DFG}
        \big| DF_\gamma(x)z \big| \ge \frac 12 |z|
        \quad\text{for all $x,z\in\R^n$.}
    \end{equation}
    Moreover, for any $x,y\in \R^n$ with $y_n<0<x_n$, we have $|x_n-y_n| =  |y_n| + |x_n|$. Hence a straightforward computation yields
    \begin{align}
        \label{EQU:NORM}
        \big|(x'-y',y_n,x_n)\big| \le |x-y|
        \quad\text{and}\quad
        |x-y| \le \sqrt{2} \,\big|(x'-y',y_n,x_n)\big|
    \end{align}
    for all $x,y\in \R^n$ with $y_n<0<x_n$.
    Thus, by definition of $\rho_\eps$ in \ref{ASS:RHO}, \eqref{EST:DFG} and Assumption \ref{ASS:J}, we infer that 
    \begin{align*}
        |\rho_\eps(DF_\gamma(x))(x-y)| 
        &\leq \frac{C}{\eps^n}\Big|\frac{x-y}{\eps}\Big|^{2-\alpha-n}\Big(1+\Big|\frac{x-y}{\eps}\Big|\Big)^{-N} 
        \\
        &\leq \frac{C}{\eps^n}\Big|\Big(\frac{x^\prime-y^\prime}{\eps},\frac{y_n}{\eps}, \frac{x_n}{\eps}\Big)\Big|^{2-\alpha-n}\Big(1+\Big|\Big(\frac{x^\prime-y^\prime}{\eps},\frac{y_n}{\eps}, \frac{x_n}{\eps}\Big)\Big|\Big)^{-N}
    \end{align*}
    for all $x,y\in \R^n$ with $y_n<0<x_n$. To simplify the notation, we now introduce the function
    \begin{align*}
        g:\R^{n+1} \to \R,
        \quad g(\zeta) \coloneqq |\zeta|^{2-\alpha-n}(1+|\zeta|)^{-N} .
    \end{align*}
    Recalling the definitions of $\mathcal A$ and $\mathcal B$, we then deduce that 
    \begin{align}
    \label{EST:I312*}
        I_\eps^{3,1,2} \leq 
        &C\sum_{|\beta|=2}\int_{\R^{n-1}}\int_0^2\Bigg(\eps^{-n}\int_0^1\int_{\R^{n-1}}\int_{-2}^0 g\Big(\frac{x^\prime-y^\prime}{\eps},\frac{y_n}{\eps}, \frac{x_n}{\eps}\Big)
        \nonumber\\
        & \qquad \qquad \qquad \qquad  
            \cdot |D^\beta w(y+t(x-y))| \dy_n \dy' \dt\Bigg)^p \dx_n \dx'
        \nonumber\\
        &\leq C\sum_{|\beta|=2}\int_{\R^{n-1}}\int_0^2\Bigg(\eps^{-n}\int_0^1\int_{\R^{n-1}}\int_{-2}^0 g\Big(\frac{x^\prime-y^\prime}{\eps},\frac{y_n}{\eps}, \frac{x_n}{\eps}\Big)
        \nonumber\\
        & \qquad 
            \cdot \|D^\beta w(y^\prime+t(x^\prime-y^\prime),.)\|_{L^\infty((-2,2))}
            \dy_n \dy' \dt\Bigg)^p
        \dx_n \dx' .
    \end{align}
    Using the change of variables $y^\prime \mapsto z^\prime = x^\prime-y^\prime$ as well as $z^\prime\mapsto \eps z^\prime$, $x_n\mapsto \eps x_n$, and $y_n\mapsto \eps z_n$, it follows that 
    \begin{align*}
        I_\eps^{3,1,2} \leq 
        &C\eps\sum_{|\beta|=2} \int_{0}^{\infty} \int_{\R^{n-1}}
            \Bigg( \int_0^1 \int_{-\infty}^{0} \int_{\R^{n-1}}
            g(z',z_n,x_n)
        \\
        &\qquad \qquad \qquad \cdot 
            \|D^\beta w(x^\prime - \eps z^\prime +t\eps z^\prime,.)\|_{L^\infty((-2,2))}\dz' \dz_n \dt\Bigg)^p \dx' \dx_n .
    \end{align*}
    Now, the change of variables $x^\prime \mapsto x^\prime - \eps z^\prime +t\eps z^\prime$ yields
    \begin{align}
    \label{EST:I312**}
        I_\eps^{3,1,2} 
        &\leq 
        C\eps\sum_{|\beta|=2}
            \int_{0}^{\infty} \!\!\!\int_{\R^{n-1}} \!\Bigg(\int_{\R^n}g(z',z_n,x_n)\|D^\beta w(x^\prime,.)\|_{L^\infty((-2,2))}\dz' \dz_n \Bigg)^p \!\!\!\dx' \!\dx_n 
        \nonumber\\
        &\leq C\eps\|w\|_{W^{3,p}(\R^n)}^p \int_{0}^{\infty}
            \Bigg(\int_{-\infty}^{0} \int_{\R^{n-1}} g(z',z_n,x_n)\dz'\dz_n \Bigg)^p\dx_n.
    \end{align}
    Next, we introduce the auxiliary function
    \begin{align*}
        G:\R\to\R, \quad
        G(s) \coloneqq \int_{-\infty}^{0} \int_{\R^{n-1}} g(z',z_n,s) \dz' \dz_n.
    \end{align*}
    Using interpolation, we obtain
    \begin{align*}
        &\int_{0}^{\infty} \Bigg(\int_{-\infty}^{0} \int_{\R^{n-1}} g(z',z_n,x_n) \dz' \dz_n \Bigg)^p  \dx_n 
        \\[1ex]
        &\quad= \|G\|_{L^p((0,\infty)})^p \leq \|G\|_{L^1((0,\infty))}\|G\|_{L^\infty((0,\infty))}^{p-1}.
    \end{align*}
    Recalling the definitions of $G$ and $g$, we deduce that
    \begin{align*}
        \|G\|_{L^1((0,\infty))}
        \le \int_{\R^{n+1}} |\zeta|^{2-\alpha-n} (1+|\zeta|)^{-N} \,\mathrm{d}\zeta
        \le C
    \end{align*}
    since $N>3-\alpha$. Moreover, recalling \eqref{EQU:NORM} and applying the changes of variables $z'\mapsto x'-z'$ and $z\mapsto x-z$, we get
    \begin{align*}
        &\|G\|_{L^\infty((0,\infty))}
        \\
        &\;\;= \underset{x_n\in (0,\infty)}{\sup} \int_{-\infty}^{0} \int_{\R^{n-1}} 
            |(x'-z',z_n,x_n)|^{2-\alpha-n} 
            (1+|(x'-z',z_n,x_n)|)^{-N} 
            \dz' \dz_n
        \\
        &\;\;\le \underset{x_n\in (0,\infty)}{\sup} C \int_{\R^{n}}  
            |x-z|^{2-\alpha-n} (1+|x-z|)^{-N} 
            \dz
        \\
        &\;\;= C \int_{\R^{n}}  
            |z|^{2-\alpha-n} (1+|z|)^{-N} 
            \dz
        \le C.
    \end{align*}
    Finally, we conclude that 
    \begin{equation}
        \label{EST:C:I312}
        I^{3,1,2}_\eps \le C\eps \, \norm{w}_{W^{3,p}(\R^n)}^{p}.
    \end{equation}
    
    \medskip

    \noindent\textit{Ad $I^{3,2}_\eps$:} 
    We have already shown in \eqref{EST:I32:0} that
    \begin{align*}
        I^{3,2}_\eps
         \leq C &\int_{\mathcal A}
            \left(
            \int_{\mathcal B}
            \rho_\eps\big( G_\gamma(x,y)(x-y) \big) \dy
            \right)^{p-1}
            \\
            & \cdot \left(
            \int_0^1 \int_{\mathcal B}
            \rho_\eps\big( G_\gamma(x,y)(x-y) \big)
            \, \big|\nabla w\big( y+t(x-y) \big) \big|^p \dy \dt 
            \right) \dx .
    \end{align*}
    As we know from \eqref{EST:B:GG:4} that
    \begin{equation*}
        \big| DF_\gamma(x)z \big| \ge \frac 12 |z|
        \quad\text{for all $x,z\in\R^n$,}
    \end{equation*}
    the integral term $I^{3,2}_\eps$ can be estimated analogously to $I^{3,1,2}_\eps$. In this way, we obtain
    \begin{equation}
        \label{EST:C:I32}
        I^{3,2}_\eps \le C\eps \, \norm{w}_{W^{3,p}(\R^n)}^{p}.
    \end{equation}    

    \noindent\textit{Ad $I^{3,3}_\eps$:}
    According to \eqref{EST:I33:0}, we have
    \begin{align*}
        I^{3,3}_\eps 
        &\le C\int_{\mathcal A} \Bigg(\int_{\mathcal B} \int_0^1 
        \big[ |\rho_\eps(z_s)| + |\nabla \rho_\eps(z_s)|\, |x-y| \big] \ds \dy \Bigg)^{p-1}
        \\
        &\qquad\qquad
        \cdot \Bigg(\int_{\mathcal B} \int_0^1 \int_0^1 
        \big[ |\rho_\eps(z_s)| + |\nabla \rho_\eps(z_s)|\, |x-y| \big] 
        \\
        &\qquad\qquad\qquad\qquad\qquad
        \cdot \big| \nabla w\big( y+t(x-y) \big) \big|^p 
        \ds \dt \dy \Bigg) \dx ,
    \end{align*}
    where $z_s$ is given by \eqref{DEF:ZS}, that is,
    \begin{align*}
        z_s = z_s(x,y)
        \coloneqq \tilde{G}_s(x,y) (x-y)         
    \end{align*}
    with
    \begin{align*}
    \tilde G_s(x,y)
        \coloneqq G_\gamma(x,x) 
            + s \big[G_\gamma(x,y) - G_\gamma(x,x) \big]
    \end{align*}
    Moreover, we have shown in \eqref{EST:ZT} that 
    \begin{align}
        \label{EST:TGSXY}
        \big|\tilde G_s(x,y)(x-y)\big| = |z_s|  \ge \frac 12 |x-y| 
    \end{align}
    for all $x,y\in\R^n$. 
    Hence, recalling the definitions of $\mathcal A$ and $\mathcal B$ as well as \ref{ASS:J}, we use the change of variables $y\mapsto z=x-y$ to deduce that
    \begin{align}
        \label{EST:I33*}
        I^{3,3}_\eps 
        &\le C\int_{\R^{n-1}} \int_0^2 \Bigg( \int_{\R^{n-1}} \int_{-2}^0 \int_0^1 
        \big[ |\rho_\eps(z_s)| + |\nabla \rho_\eps(z_s)|\, |x-y| \big] \ds \dy_n \dy'  \Bigg)^{p-1} \!\!\!\dx_n
        \nonumber\\
        &\qquad\qquad
        \cdot \Bigg(\int_{\R^{n-1}} \int_0^4 
        \abs{\frac{z}{\eps}}^{2-\alpha-n} 
        \Big( 1 + \abs{\frac{z}{\eps}} \Big)^{-N}
        \\
        &\qquad\qquad\qquad
        \cdot \int_0^1 \big\| \nabla w\big( x'-z'+tz', \cdot \big) \big\|_{L^\infty((-2,2))}^p 
        \dt \dz_n \dz' \Bigg) \dx' 
        \nonumber
    \end{align}
    Now, we fix an arbitrary $x'\in \R^{n-1}$ and proceed similarly to \eqref{EST:RATE:EPS} and \eqref{EST:AEPS}.
    Recalling $\eps\in (0,1]$ and using the growth assumptions from \ref{ASS:J} as well as the changes of variables $y \mapsto x-z$ and $z\mapsto \eps z$, we obtain
    \allowdisplaybreaks
    \begin{align*}
        &\int_0^2 \Bigg( \int_{\R^{n-1}} \int_{-2}^0 \int_0^1 
        \big[ |\rho_\eps(z_s)| + |\nabla \rho_\eps(z_s)|\, |x-y| \big] \ds \dy_n \dy' \Bigg)^{p-1} \dx_n
        \\
        &\begin{aligned}
        &\le C \int_0^2 \Bigg( \eps^{-n} \int_{\R^{n-1}} \int_{x_n}^{x_n+2} \int_0^1 
        \abs{\tilde G_s(x,x-z)\tfrac{z}{\eps}}^{2-\alpha-n} 
        \Big( 1 + \abs{\tilde G_s(x,x-z)\tfrac{z}{\eps}} \Big)^{-N} 
        \\
        &\qquad\quad 
        + \abs{\tilde G_s(x,x-z)\tfrac{z}{\eps}}^{1-\alpha-n}
        \Big( 1 + \abs{\tilde G_s(x,x-z)\tfrac{z}{\eps}} \Big)^{-N} 
        \abs{\tfrac{z}{\eps}}
        \ds \dz_n\dz' \Bigg)^{p-1} \!\!\!\dx_n
        \end{aligned}
        \\
        &\begin{aligned}
        &\le C \int_0^2 \Bigg( \int_{\R^{n-1}} \int_{x_n/\eps}^{(x_n+2)/\eps} \int_0^1 
        \abs{\tilde G_s(x,x-\eps z)z}^{2-\alpha-n} 
        \Big( 1 + \abs{\tilde G_s(x,x-\eps z)z} \Big)^{-N} 
        \\
        &\qquad\quad 
        + \abs{\tilde G_s(x,x-\eps z)z}^{1-\alpha-n}
        \Big( 1 + \abs{\tilde G_s(x,x-\eps z)z} \Big)^{-N} 
        \abs{z}
        \ds \dz_n\dz' \Bigg)^{p-1} \!\!\!\!\!\dx_n.
        \end{aligned}
    \end{align*}
    \allowdisplaybreaks[0]%
    Next, employing the changes of variables $x_n \mapsto \eps x_n$ and $z\mapsto y = x-z$, using the abbreviation $x_\eps = (x',\eps x_n)$, and invoking \eqref{EST:TGSXY}, we infer that
    \allowdisplaybreaks
    \begin{align*}
        &\int_0^2 \Bigg( \int_{\R^{n-1}} \int_{-2}^0 \int_0^1 
        \big[ |\rho_\eps(z_s)| + |\nabla \rho_\eps(z_s)|\, |x-y| \big] \ds \dy_n\dy' \Bigg)^{p-1} \dx_n
        \\
        &\begin{aligned}
        &\le C\eps \! \int_0^2 \! \Bigg( \int_{\R^{n-1}} \int_{x_n}^{x_n+(2/\eps)} \! \int_0^1 
        \! \abs{\tilde G_s(x_\eps,x_\eps-\eps z)z}^{2-\alpha-n} 
        \Big( 1 + \abs{\tilde G_s(x_\eps,x_\eps-\eps z)z} \Big)^{-N} 
        \\
        &\quad 
        + \abs{\tilde G_s(x_\eps,x_\eps-\eps z)z}^{1-\alpha-n}
        \Big( 1 + \abs{\tilde G_s(x_\eps,x_\eps-\eps z)z} \Big)^{-N} 
        \abs{z}
        \ds \dz_n\dz' \Bigg)^{p-1} \dx_n.
        \end{aligned}
        \\
        &\begin{aligned}
        &\le C\eps \int_0^2 \Bigg( \int_{\R^{n-1}} \int_{x_n}^{x_n+(2/\eps)} 
        \abs{z}^{2-\alpha-n} 
        ( 1 + \abs{z} )^{-N} 
        \dz_n \dz' \Bigg)^{p-1} \dx_n.
        \end{aligned}
        \\
        &\begin{aligned}
        &\le C\eps \int_0^\infty \Bigg( \int_{\R^{n-1}} \int_{-\infty}^{0} 
        \abs{x-y}^{2-\alpha-n} 
        ( 1 + \abs{x-y} )^{-N} 
        \dy_n\dy' \Bigg)^{p-1} \dx_n.
        \end{aligned}
    \end{align*}
    \allowdisplaybreaks[0]%
    Arguing as in \eqref{EST:AEPS}, we deduce that the integral term in the last line is finite. Hence, we conclude that
    \begin{align*}
        &\int_0^2 \Bigg( \int_{\R^{n-1}} \int_{-2}^0 \int_0^1 
        \big[ |\rho_\eps(z_s)| + |\nabla \rho_\eps(z_s)|\, |x-y| \big] \ds \dy_n\dy' \Bigg)^{p-1} \dx_n
        \le C\eps
    \end{align*}
    for all $x'\in \R^{n-1}$. Using this inequality to estimate the right-hand side of \eqref{EST:I33*}, we infer\vspace{-1ex}
    \begin{align*}
        I^{3,3}_\eps 
        &\le C\eps \int_{\R^{n-1}} \int_{\R^{n-1}} \int_0^4 
        \abs{\frac{z}{\eps}}^{2-\alpha-n} 
        \Big( 1 + \abs{\frac{z}{\eps}} \Big)^{-N}
        \\
        &\qquad\qquad
        \cdot \int_0^1 \big\| \nabla w\big( x'-z'+tz', \cdot \big) \big\|_{L^\infty((-2,2))}^p 
        \dt \dz_n \dz' \dx' 
        \nonumber
    \end{align*}
    This integral term can be handled analogously to \eqref{EST:I312*} and \eqref{EST:I312**}. In this way, we finally conclude the estimate
    \begin{equation}
        \label{EST:C:I33}
        I^{3,3}_\eps \le C\eps \, \norm{w}_{W^{3,p}(\R^n)}^{p}.
    \end{equation}  
    
    In summary, collecting \eqref{EST:C:I311}, \eqref{EST:C:I312}, \eqref{EST:C:I32} and \eqref{EST:C:I33} we arrive at
    \begin{align}
        \label{EST:C:R3}
        I^{3}_\eps \le C\eps \, \norm{w}_{W^{3,p}(\R^n)}^{p}.
    \end{align}
    
    Finally, plugging the estimates \eqref{EST:C:R1}, \eqref{EST:C:R2}, and \eqref{EST:C:R3} into \eqref{EST:C:RE}, we have
    \begin{equation*}
        \|\mathcal{R}_\eps\tilde{u}\|_{L^p({\R^n_\gamma})}^p 
        \le C\eps \, \norm{w}_{W^{3,p}(\R^n)}^p,
    \end{equation*}
    which verifies \eqref{EST:C:2}.
    Combining this estimate with \eqref{EST:C:1} and \eqref{EST:C:WU}, we finally conclude that
    \begin{equation*}
        \|\mathcal{R}_\eps\tilde{u}\|_{L^p({\R^n_\gamma})}^p 
        \le C\eps \, \norm{u}_{W^{3,p}({\R^n_\gamma})}^p.
    \end{equation*}
    This means that \eqref{CONV:CHS} is established for $u\in C^3_c(\overline{{\R^n_\gamma}}) \cap W^{3,p}_I({\R^n_\gamma})$.

    \smallskip
    
    \noindent
    \textbf{Step~3: Construction of the operator and completion of the proof.}\\
    As the inclusion $C^2_c(\overline{{\R^n_\gamma}}) \cap W^{2,p}_I({\R^n_\gamma}) \subset W^{2,p}_I({\R^n_\gamma})$ is dense, we can use the result established in Step~1 to extend
    \begin{equation*}
        \LL_\eps^{\R^n_\gamma}: C^2_c(\overline{\R^n_\gamma}) \cap W^{2,p}_I({\R^n_\gamma}) \to L^p_\mathrm{loc}({\R^n_\gamma})
    \end{equation*}
    to a bounded linear operator\vspace{-1ex}
    \begin{equation}
        \label{DEF:LOM:HS*}
        \LL_\eps^{\R^n_\gamma}: W^{2,p}_I({\R^n_\gamma}) \to L^p({\R^n_\gamma})
    \end{equation}
    with
    \begin{equation}
        \label{BND:LOM:HS*}
        \norm{\LL_\eps^{\R^n_\gamma} u}_{L^p({\R^n_\gamma})} \leq C_\gamma \|u\|_{W^{2,p}({\R^n_\gamma})}
    \end{equation}
    for a suitable constant $C_\gamma>0$ that depends only on ${\R^n_\gamma}$, $\rho$ and $p$. In particular, the extension \eqref{DEF:LOM:HS*} is unique.
    We further know from Step~2 that \eqref{CONV:CHS}
    holds for all $u\in C^3_c(\overline{{\R^n_\gamma}}) \cap W^{3,p}_I({\R^n_\gamma})$. 
    Because of density, it is clear that \eqref{CONV:CHS} holds true for all $u\in W^{3,p}_I({\R^n_\gamma})$. Finally, proceeding similarly to the proof of Theorem~\ref{THM:CONV:R^N}\ref{THM:CONV:R^N:A}, we use the Banach--Steinhaus theorem to conclude the convergence \eqref{CONV:CHS:2}. 

    This means that all claims are established and thus, the proof is complete.
\end{proof}

\medskip


\begin{proof}[Proof of Corollary~\ref{COR:CONV:HS}]
    First, let $u\in C^2_c(\overline{Q\R^n_\gamma}) \cap W^{2,p}_Q(Q\R^n_\gamma)$ be arbitrary. We now introduce the function 
    \begin{equation*}
        w: \R^n_\gamma \to \R, \quad
        w(x) = u(Qx).
    \end{equation*}
    As the transformation $x\mapsto Qx$ is linear, it is clear that $w\in W^{3,p}(\R^n_\gamma)$. In particular, since $Q \in \mathrm{SO}(n)$, we obtain
    \begin{equation}
        \label{ID:W2P}
        \|w\|_{W^{2,p}(\R^n_\gamma)} = \|u\|_{W^{2,p}(Q\R^n_\gamma)}.
    \end{equation}    
    Moreover, using the chain rule, we obtain that
    \begin{align*}
        \nabla w(x) \cdot e_n 
        = Q\, \nabla u(Qx) \cdot e_n = 0 
        \quad\text{for all $x\in \partial \R^n_\gamma$}
    \end{align*}
    since $Qx \in \partial Q\R^n_\gamma$. Thus, we have $w\in W^{3,p}_I(\R^n_\gamma)$. Moreover, as the Laplace operator is invariant under rotations, we have
    \begin{equation*}
        \Delta w(x) = \Delta u(Qx)
        \quad\text{for all $x\in \R^n_\gamma$.}
    \end{equation*}
    This can also be easily verified by means of the chain rule. We now define $\widehat{\rho}:\R^n \to\R$ with
	\begin{equation*}
		\widehat{\rho}(x) = \rho(Qx) 
		\quad\text{for all $x\in\R^n\setminus\{0\}$}. 
	\end{equation*}
	Hence, its associated kernel is given by $\widehat{J} = J(Q\,\cdot\,)$. Accordingly, we write $\widehat{J}_\eps = J_\eps(Q\,\cdot\,)$ for all $\eps>0$.
	Since $\rho$ and $J$ satisfy \ref{ASS:RHO} and \ref{ASS:J} and $QQ^T = I$, it is straightforward to check that $\widehat{\rho}$ and $\widehat{J}$ fulfill \ref{ASS:RHO} and \ref{ASS:J} with
	\begin{align*}
        \begin{split}
		\widehat M 
        &\coloneqq \frac{1}{2} \int_{\R^n} \widehat{J}(z)\, z \otimes z \dz 
        = \frac{1}{2}\, Q \int_{\R^n} J(Qz)\, (Qz) \otimes (Qz) \dz \, Q^T
        \\
        &= \frac{1}{2}\, Q \int_{\R^n} J(Qz)\, (Qz) \otimes (Qz) \dz \, Q^T 
        = \frac{1}{2}\, Q \int_{\R^n} J(z)\, z \otimes z \dz \, Q^T 
        = I,
        \end{split}
		\\[1ex]
		\widehat A &\coloneqq  I
	\end{align*}
    in place of $M$ and $A$. In particular, according to Theorem~\ref{THM:CONV:HS} and Corollary~\ref{COR:WD:DOM}, the nonlocal operators
    \begin{align*}
        \begin{split}
		&\widehat{\LL}^{\R^n_\gamma}_\eps : W^{2,p}(\R^n_\gamma) \to L^p(\R^n_\gamma),
		\\
		&\widehat{\LL}^{\R^n_\gamma}_\eps v(x) = \mathrm{P.V.}\int_{\R^n_\gamma} \widehat{J}_\eps(x-y) \big( v(x) - v(y) \big) \dy
        \end{split}
	\end{align*}
    and
    \begin{align*}
        \begin{split}
            &\LL_\eps^{Q\R^n_\gamma}: 
            C_c^2(\overline{Q\R^n_\gamma}) \cap W^{2,p}_Q(Q\R^n_\gamma) 
            \to L^p_{\mathrm{loc}}(Q\R^n_\gamma),
            \\
            &\LL_\eps^{Q\R^n_\gamma} v(x) 
            = \mathrm{P.V.} \int_{Q\R^n_\gamma} J_\eps(x-y) \big( v(x) - v(y) \big) \dy
        \end{split}
    \end{align*}
    are well-defined for every $\eps>0$. 
    Employing the changes of variables $x\mapsto Qx$ and $y\mapsto Qy$, we deduce that 
    \begin{align}
		\label{TRAFO:Q}
		\begin{split}
			&\big\|\mathcal{L}^{Q\R^n_\gamma}_\eps u + \Delta u\big\|_{L^p(Q\R^n_\gamma)}^p 
			\\
			&= \int_{Q \R^n_\gamma}\left| 
			\,\mathrm{P.V.} \int_{Q \R^n_\gamma} J_\eps(x-y)\big(u(x)-u(y)\big)\dy 
			+ \Delta u(x)\right|^p\dx 
			\\
			&= \int_{\R^n_\gamma}\left|
			\,\mathrm{P.V.} \int_{\R^n_\gamma}
			J_\eps\big(Q(x-y)\big)\big(w(x)-w(y)\big)\dy 
			+ \Delta w(x)\right|^p
			\dx
			\\[1ex]
			&= \big\|\widehat{\LL}^{\R^n_\gamma}_\eps w + \Delta w \big\|_{L^p(\R^n_\gamma)}^p.
		\end{split}
	\end{align}
    In particular, invoking the estimate \eqref{BND:LOM:HS} from Theorem~\ref{THM:CONV:HS} and the identity \eqref{ID:W2P}, this entails
    \begin{align}
		\label{TRAFO:Q*}
		\begin{split}
			\big\|\mathcal{L}^{Q\R^n_\gamma}_\eps u \big\|_{L^p(Q\R^n_\gamma)}
            &\le \big\|\mathcal{L}^{Q\R^n_\gamma}_\eps u 
                    + \Delta u\big\|_{L^p(Q\R^n_\gamma)}
                + \big\| \Delta u \big\|_{L^p(Q\R^n_\gamma)}
			\\
			&= \big\|\widehat{\LL}^{\R^n_\gamma}_\eps w 
                + \Delta w \big\|_{L^p(\R^n_\gamma)}
                + \big\| \Delta w \big\|_{L^p(\R^n_\gamma)}
            \\
			&\le  \big\|\widehat{\LL}^{\R^n_\gamma}_\eps w 
                \big\|_{L^p(\R^n_\gamma)}
                + 2\big\| \Delta w \big\|_{L^p(\R^n_\gamma)}
            \\
            &\le C \| w \|_{W^{2,p}(\R^n_\gamma)}
            = C \| u \|_{W^{2,p}(Q\R^n_\gamma)}.
		\end{split}
	\end{align}    
    Consequently, $\mathcal{L}^{Q\R^n_\gamma}_\eps$ can be extended to a bounded linear operator
    \begin{align*}
        \LL_\eps^{Q\R^n_\gamma}: 
        W^{2,p}_Q(Q\R^n_\gamma) \to L^p(Q\R^n_\gamma) \,.
    \end{align*}

    Now, we additionally assume that $u\in W^{3,p}_Q(Q\R^n_\gamma)$. Hence,
    $u\in W^{3,p}_I(\R^n_\gamma)$ with 
    \begin{equation}
        \label{ID:W3P}
        \|w\|_{W^{3,p}(\R^n_\gamma)} = \|u\|_{W^{3,p}(Q\R^n_\gamma)}.
    \end{equation} 
    Using the inequality \eqref{CONV:CHS} from Theorem~\ref{THM:CONV:HS} to estimate the right-hand side of \eqref{TRAFO:Q}, we obtain
    \begin{align*}
        &\big\|\mathcal{L}^{Q\R^n_\gamma}_\eps u + \Delta u\big\|_{L^p(Q\R^n_\gamma)} 
        = \big\|\widehat{\LL}^{\R^n_\gamma}_\eps w 
            + \Delta w \big\|_{L^p(\R^n_\gamma)}
        \\
        &\quad \le C_\gamma\, \sqrt[p]{\eps}\, \|w\|_{W^{3,p}(\R^n_\gamma)}
        = C_\gamma\, \sqrt[p]{\eps}\, \|u\|_{W^{3,p}(Q\R^n_\gamma)}
    \end{align*}
    as desired. Proceeding as in Step~3 of the proof of Theorem~\ref{THM:CONV:HS}, we apply the Banach--Steinhaus theorem to conclude that
    \begin{align*}
        \mathcal{L}^{Q\R^n_\gamma}_\eps u \to -\Delta u
        \quad\text{for all $u\in W^{2,p}(Q\R^n_\gamma)$.}
    \end{align*}    
    Therefore, the proof of Corollary~\ref{COR:CONV:HS} is complete.
\end{proof}


\subsection{Convergence on a domain with compact boundary} \label{SUBSEC:DOMAIN}

In this section, we finally consider the case where $\Omega$ is a domain with compact $C^3$-boundary. 

\medskip

\begin{theorem}\label{THM:CONV:DOM}
	Let $\Omega\subset\R^n$ be a domain with compact boundary of class $C^3$, $p\in [1,\infty)$, $\eps\in (0,1]$, and suppose that \ref{ASS:RHO} and \ref{ASS:J} hold. 
	Then, the operator introduced in \eqref{DEF:LOM:CC2L1LOC} $\big($restricted to $C^2_c(\overline{\Omega}) \cap W^{2,p}_M(\Omega)\big)$ can be extended to a bounded linear operator
    \begin{equation}
        \label{DEF:LOM:DOM}
        \LL_\eps^{\Omega}: W^{2,p}_M(\Omega) \to L^p(\Omega)
    \end{equation}
    with
    \begin{equation}
        \label{BND:LOM:DOM}
        \norm{\LL_\eps^{\Omega} u}_{L^p(\Omega)} 
        \leq c_\Omega \|u\|_{W^{2,p}(\Omega)}
    \end{equation}
    for all $u\in W^{2,p}_M(\Omega)$, where $c_\Omega$ is a positive constant depending only on $\Omega$, $\rho$ and $p$.
    Moreover, there exists a positive constant $C_\Omega$ depending only on $\Omega$, $\rho$ and $p$ such that for all $\eps>0$ and all $u\in W^{3,p}_M(\Omega)$, it holds
	\begin{equation}
		\label{CONV:DOM}
		\big\|\LL_\eps^{\Omega} u - \LL^{\Omega} u \big\|_{L^p(\Omega)} 
        \leq C_\Omega\, \sqrt[p]{\eps}\, \|u\|_{W^{3,p}(\Omega)}.
	\end{equation}
    Furthermore, for all $u\in W^{2,p}_M(\Omega)$, it holds
    \begin{equation}
		\label{CONV:DOM:2}
		\LL_\eps^{\Omega} u \to \LL^{\Omega} u 
        \quad\text{in $L^p(\Omega)$ as $\eps\to 0$.}
	\end{equation}
\end{theorem}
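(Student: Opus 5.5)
We reduce to the case $M=I$ and then localize. First we remove the anisotropy by the linear change of variables $x\mapsto A^{-1}x$ with $A=\sqrt M$ from \eqref{DEF:A}. Given $u$ on $\Omega$, set $v(x)\coloneqq u(Ax)$ on $\widetilde\Omega\coloneqq A^{-1}\Omega$, and introduce the kernel $\widetilde J(z)\coloneqq (\det A)\,J(Az)$, i.e.\ $\widetilde\rho(z)=(\det A)\,\rho(Az)\,|Az|^2/|z|^2$. Substituting $x=A\tilde x$ and $y=A\tilde y$ gives
\begin{equation*}
\LL^\Omega_\eps u(A\tilde x)=\widetilde\LL^{\widetilde\Omega}_\eps v(\tilde x),
\end{equation*}
where $\widetilde J_\eps(z)=\eps^{-n-2}\widetilde J(z/\eps)$ is consistent with the scaling \eqref{DEF:JE}. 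A direct computation shows that the momentum matrix of $\widetilde J$ equals $A^{-1}MA^{-1}=I$. Moreover, $\widetilde\rho$ satisfies \ref{ASS:RHO} and the growth bounds \eqref{COND:RHO:2} with modified constants, and \eqref{COND:J:2} for $\widetilde J$ (with $\widetilde A=I$) is exactly \eqref{COND:J:2} for $J$. On the local side, $-\Div(M\nabla u)(A\tilde x)=-\Delta v(\tilde x)$, and the boundary condition $M\nabla u\cdot\n_{\partial\Omega}=0$ becomes $\nabla v\cdot\n_{\partial\widetilde\Omega}=0$, since the normal of $A^{-1}\Omega$ is parallel to $A\n$ and $A\n\cdot\nabla v=A\n\cdot A\nabla u=M\nabla u\cdot\n$. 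Sobolev norms change only by constants depending on $A$. It therefore suffices to prove the theorem for $M=I$, on the domain $\widetilde\Omega$, which again has compact $C^3$ boundary.

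Next we localize. Since $\partial\Omega$ is compact, we cover it by finitely many open sets $U_1,\dots,U_m$ such that, after a rotation $Q_j\in\mathrm{SO}(n)$ and a translation, $\Omega\cap U_j=Q_j\R^n_{\gamma_j}\cap U_j$, where $\gamma_j\in C^3_b$ and $\|\gamma_j\|_{C^1_b}$ is as small as Theorem~\ref{THM:CONV:HS} requires. This is achieved by taking the $U_j$ small and choosing $Q_j$ so that $e_n$ is the inner normal at the center. We add an interior open set $U_0$ with $\overline{U_0}\subset\Omega$, chosen so that $\Omega\setminus U_0\subset\bigcup_{j\ge 1}U_j$; $U_0$ may be unbounded, e.g.\ for exterior domains. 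Let $\{\psi_j\}_{j=0}^m$ be a smooth partition of unity on $\overline\Omega$ subordinate to this cover, with bounded derivatives. We write
\begin{equation*}
\LL^\Omega_\eps u=\sum_j \LL^\Omega_\eps(\psi_j u).
\end{equation*}
Each $\psi_ju$ still satisfies the Neumann condition, because $\nabla(\psi_j u)\cdot\n=\psi_j\nabla u\cdot\n+u\nabla\psi_j\cdot\n$ and we may arrange $\nabla\psi_j\cdot\n=0$ on $\partial\Omega$ (for instance by making $\psi_j$ constant along normals near the boundary). Failing that, we absorb the resulting term as a lower-order perturbation of the boundary data, which changes nothing in the estimates.

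For $j\ge1$ we extend $\psi_ju$ by zero and compare $\LL^\Omega_\eps(\psi_ju)$ with $\LL^{Q_j\R^n_{\gamma_j}}_\eps(\psi_ju)$; we use translation invariance and Corollary~\ref{COR:CONV:HS}, after extending $\gamma_j$ suitably outside $U_j$. Similarly, for $j=0$ we compare $\LL^\Omega_\eps(\psi_0u)$ with $\LL^{\R^n}_\eps(\psi_0u)$ and apply Theorem~\ref{THM:CONV:R^N}. These comparisons hold on a set $V_j\Subset U_j$. The difference of the two operators at $x\in\operatorname{supp}\psi_j$ is an integral over the symmetric difference of the two domains, which stays at a fixed distance $\delta>0$ from $\operatorname{supp}\psi_j$. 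Away from $\operatorname{supp}\psi_j$ the function $\psi_ju$ vanishes, so there $\LL_\eps(\psi_ju)(x)=-\int J_\eps(x-y)\psi_ju(y)\dy$ involves only $|x-y|\ge\delta$. All such error terms are therefore bounded by convolutions with $\rho_\eps\mathbf 1_{|z|\ge\delta}/\delta^2$. By Young's inequality their $L^p$ norms are at most $C\delta^{-2}\|u\|_{L^p}$, which gives the boundedness \eqref{BND:LOM:DOM}. For the rate we use $\int_{|z|\ge\delta}\rho_\eps\le\delta^{-1}\int\rho_\eps|z|=C\eps$ by \eqref{COND:RHO:1}, so these terms are $O(\eps)\|u\|_{L^p}$, which is better than $\eps^{1/p}$ since $\eps\le1$. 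On the local side, $\sum_j\Delta(\psi_ju)=\Delta u$, so summing the local estimates yields \eqref{CONV:DOM} with $\|\psi_ju\|_{W^{3,p}}\le C\|u\|_{W^{3,p}}$.

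Having the bound on the dense class $C^2_c(\overline\Omega)\cap W^{2,p}_M(\Omega)$, we extend the operator by density. Then \eqref{CONV:DOM:2} follows, as in the proof of Theorem~\ref{THM:CONV:R^N}\ref{THM:CONV:R^N:A}, from the Banach--Steinhaus theorem and the density of $W^{3,p}_M$ in $W^{2,p}_M$. I expect the main obstacle to be bookkeeping rather than a genuinely new estimate. Specifically, we must (i) verify that the transformed kernel $\widetilde J$ really satisfies \eqref{COND:J:2} and \eqref{COND:RHO:2}, and (ii) construct the partition of unity so that the Neumann condition is preserved, together with the density of smooth functions satisfying the boundary condition in $W^{k,p}_M$. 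For (ii) I would first try normal-constant cutoffs in a tubular neighbourhood of $\partial\Omega$.
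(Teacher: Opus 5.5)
Your overall route is the paper's. You rescale by $A=\sqrt M$: your $\widetilde J=(\det A)J(A\,\cdot)$ is the paper's $\widehat J$, and \eqref{COND:J:2} transfers verbatim; your formula for $\widetilde\rho$ has the ratio inverted, since it should read $(\det A)\rho(Az)|z|^2/|Az|^2$, which is harmless for the growth bounds. You then localize with a partition of unity whose members have vanishing normal derivative, apply Corollary~\ref{COR:CONV:HS} and Theorem~\ref{THM:CONV:R^N} piece by piece, and bound the domain-mismatch and far-field terms by $\delta^{-3}\int\rho_\eps(z)|z|\,\mathrm{d}z=O(\eps)$. Density and Banach--Steinhaus finish the argument. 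In two respects you are more careful than the written proof. You insist on small $\|\gamma_j\|_{C^1_b}$, which Theorem~\ref{THM:CONV:HS} requires. You also let the interior cutoff have unbounded support; since the paper takes every $\tilde\varphi_j$ compactly supported, its construction only covers bounded $\widehat\Omega$, not exterior domains.

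The step that does not close is your item (ii), and both of your suggestions for it fail. First, normal-constant cutoffs $\chi\circ\pi$, with $\pi=x-d\nabla d$ the nearest-point projection, are in general only $C^2$ when $\partial\Omega$ is merely $C^3$. Indeed $d\in C^3$, but $D^3\pi$ contains the term $d\,D^4d$, i.e.\ fourth derivatives of the boundary. Such cutoffs suffice for \eqref{BND:LOM:DOM}, but not for $\|\psi_ju\|_{W^{3,p}}\le C\|u\|_{W^{3,p}}$, on which the rate \eqref{CONV:DOM} rests.

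Second, treating $u\,\partial_{\n}\psi_j$ as a lower-order perturbation is not available either. Corollary~\ref{COR:CONV:HS} needs the Neumann condition for each $\psi_ju$ exactly: in the half-space proof it is the only thing controlling the first-order term $I^{3,1,1}_\eps$, through $\partial_{x_n}w(x',0)=0$ and Hardy's inequality. Without it, the term $\partial_{\n}(\psi_ju)\int_{y_n<0}J_\eps(x-y)(x_n-y_n)\,\mathrm{d}y$ is at least of order $\eps^{-1}$ on a boundary layer of width comparable to $\eps$. Its $L^p$ norm therefore does not vanish as $\eps\to0$, and even blows up for $p>1$.

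The paper builds the cutoffs as $\tilde\varphi_z=\widehat\varphi_z\circ(Q_zF_z)^{-1}$, where $\widehat\varphi_z(\xi)=\phi(\xi')\eta(\xi_n)$ is a product cutoff with $\eta'(0)=0$. Here $F_z$ is the $C^{2,1}$-diffeomorphism of \cite{Schumacher2009} satisfying $DF_z(x',0)^{-1}\n\in\mathrm{span}\{e_n\}$. This yields $\nabla\tilde\varphi_z\cdot\n=0$ on the boundary together with $\tilde\varphi_z\in W^{3,\infty}$, and normalizing by $S=\sum_j\tilde\varphi_j$ preserves both properties. With this construction in place of normal-constant cutoffs, the rest of your argument goes through.
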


\medskip

\begin{proof}
    To provide a cleaner presentation, we will usually refrain from indicating the principal value by the symbol $\mathrm{P.V.}$ whenever the meaning is clear. 
    In the following, the letter $C$ will denote generic positive constants depending only on $\Omega$, $\rho$ and $p$. The concrete value of $C$ may vary throughout the proof.
    The proof is split into three steps.
	
	\noindent\textbf{Step~1. Reduction to a simpler setting on a reference domain.}\\
    We already know from Corollary~\ref{COR:WD:DOM} that the operator
    \begin{align}
    \label{DEF:LOM:DOM*}
        \begin{split}
            &\LL_\eps^\Omega: C_c^2(\overline\Omega) \cap W^{2,p}_M(\Omega)
            \to L^p_{\mathrm{loc}}(\Omega),
            \\
            &\LL_\eps^\Omega v(x) 
            = \mathrm{P.V.} \int_{\Omega} J_\eps(x-y) \big( v(x) - v(y) \big) \dy
        \end{split}
    \end{align}
    is well-defined and linear.
    Let now $u\in C^2_c(\overline\Omega)\cap W^{2,p}_M(\Omega)$ be arbitrary and let $A$ be the matrix introduced in \eqref{DEF:A}. This means that
    \begin{equation*}
        \det A 
        = \det\sqrt{M}
        = (\det M)^{\frac12}.
    \end{equation*} 
	Using the change of variables $z\mapsto Az$, we observe that
	\begin{align*}
		AA^T &=  M 
		= \frac{1}{2} \int_{\Omega}J(z)z\otimes z\dz 
		=  \frac{1}{2} A\Bigg(\int_{A^{-1}\Omega}J(Az)z\otimes z\dz\Bigg)A^T \det A .
	\end{align*} 
	This implies that
	\begin{align}
		\label{TRAFO:1}
		\frac{1}{2} \int_{\R^n}J(Az)z\otimes z\dz = I (\det A)^{-1},
	\end{align}
	where $I\in \R^{n\times n}$ denotes the identity matrix.
	In the following, we choose $\widehat\Omega \coloneqq A^{-1}\Omega$ as our reference domain. 
    
    As $\Omega$ is a domain of class $C^3$, so is $\widehat\Omega$. Thanks to the chain rule, the normal spaces satisfy the relation
	\begin{align*}
		\mathrm{N}_{\hat{x}}(\partial\widehat\Omega) 
		= A^T \mathrm{N}_{A\hat{x}}(\partial\Omega).
	\end{align*}
	In particular, since $A^T = A$, this implies that
	\begin{align*}
		\mathbf{n}_{\partial\widehat\Omega}(\hat{x}) = A\:\mathbf{n}_{\partial\Omega}(A\hat{x})\quad\text{ for all }x\in\partial\widehat\Omega.
	\end{align*} 	
	We now define the function
	\begin{equation*}
		w:\widehat\Omega\to\R, \quad v(\hat{x}) = u(A\hat{x}).
	\end{equation*}
	By means of the chain rule, it is straightforward to check that
    \begin{equation}
        \label{EST:DOM:WU}
        w \in C^2_c\big(\,\overline{\widehat\Omega}\,\big) 
        \cap W^{2,p}(\widehat\Omega)
        \quad\text{with}\quad
        \|w\|_{W^{2,p}(\widehat\Omega)}
        \le C \|u\|_{W^{2,p}(\Omega)} \,.
    \end{equation}
	Recalling the definition of $W^{2,p}_M(\Omega)$ (see~\eqref{DEF:WKM}) and using the above identities along with the chain rule, we observe that
	\begin{align*}
		&\nabla w(\hat{x})\cdot\mathbf{n}_{\partial\widehat\Omega}(\hat{x}) 
		= A\nabla u(A\hat{x})\cdot A\mathbf{n}_{\partial\Omega}(A\hat{x}) \\
		&\quad= A^2\nabla u(A\hat{x})\cdot \mathbf{n}_{\partial\Omega}(A\hat{x}) 
		= M\nabla u(A\hat{x})\cdot \mathbf{n}_{\partial\Omega}(A\hat{x})
		= 0
	\end{align*}
	for all $x\in\partial\widehat\Omega$. This means that $w\in W^{2,p}_I(\widehat\Omega)$.
	Moreover, by means of the chain rule, we further deduce that
	\begin{align*}
		&\Delta w(\hat{x}) = \text{div}(\nabla w(\hat{x})) 
		= \Div(A\nabla u(A\hat{x})) 
	\end{align*}
	for all $\hat{x}\in\widehat\Omega$. 
	We now define $\widehat{\rho}:\R^n \to\R$ with
	\begin{equation}
        \label{DEF:HRHO}
		\widehat{\rho}(x) = \rho(Ax) \frac{\abs{x}^2}{\abs{Ax}^2}
        \,\det A
		\quad\text{for all $x\in\R^n\setminus\{0\}$}, 
	\end{equation}
	and we write $\widehat{J}$ to denote its associated kernel. 
	Since $\rho$ and $J$ satisfy \ref{ASS:RHO} and \ref{ASS:J}, it is straightforward to check that $\widehat{\rho}$ and $\widehat{J}$ fulfill \ref{ASS:RHO} and \ref{ASS:J} with
    \allowdisplaybreaks
	\begin{align}
		\widehat M &\coloneqq \frac{1}{2} \int_{\R^n} \widehat{J}(z)\, z \otimes z \dz =  I,
		\\
		\widehat A &\coloneqq  I
	\end{align}
    \allowdisplaybreaks[0]
	in place of $M$ and $A$, respectively. 
	In particular, for all $\eps>0$, we have
	\begin{align*}
		\widehat{J}(x) = J(Ax) \,\det A
		\quad\text{and}\quad
		\widehat{J_\eps}(x) = J_\eps(Ax) \,\det A
		\quad\text{for all $x\in\R^n\setminus\{0\}$,}
	\end{align*}
	and in view of assumption \eqref{COND:J:2}, we have
	\begin{align}
		\label{Cond:kernel}
		&\int_{\R^{n-1}} \widehat{J}\left(Q
		\begin{pmatrix}
			z' \\ z_n
		\end{pmatrix}\right) z' \,\mathrm{d}z' = 0
		\quad\text{for almost all $z_n\in\R$ and all $Q\in \mathrm{O}(n)$.}
	\end{align}
	This condition will be crucial in Step~2 of this proof and it is the reason, why \eqref{COND:J:2} was demanded in Assumption~\ref{ASS:J} in the first place.

	Now, we further introduce the nonlocal operator
	\begin{align}
        \label{DEF:HATLLE}
        \begin{split}
		&\widehat{\LL}_\eps^{\widehat\Omega}: 
        C^2_c\big(\,\overline{\widehat\Omega}\,\big) 
        \cap W^{2,p}(\widehat\Omega) 
        \to L^p_\mathrm{loc}(\widehat\Omega),
		\\
		&\widehat{\LL}_\eps^{\widehat\Omega} v(x) = \mathrm{P.V.}\int_{\widehat\Omega} \widehat{J}_\eps(x-y) \big( v(x) - v(y) \big) \dy.
        \end{split}
	\end{align}
	Since $\widehat{\rho}$ and $\widehat{J}$ satisfy \ref{ASS:RHO} and \ref{ASS:J}, we already know from Corollary~\ref{COR:WD:DOM} that this operator is well-defined and linear.
    In the remainder of this proof, we need to establish the following key estimates:
    \begin{align}
        \label{KEY:1}
        \| \mathcal{L}^{\widehat\Omega}_\eps v \|_{L^p(\widehat\Omega)}
        &\le C \| v \|_{W^{2,p}(\widehat\Omega)}
        &&\quad\text{for all $v\in C^2_c\big(\,\overline{\widehat\Omega}\,\big) 
        \cap W^{2,p}_I(\widehat\Omega)$},
        \\
        \label{KEY:2}
        \|\mathcal{L}^{\widehat\Omega}_\eps v + \Delta v\|_{L^p(\widehat\Omega)}
        &\le C \sqrt[p]{\eps} \| v \|_{W^{3,p}(\widehat\Omega)}
        &&\quad\text{for all $v\in C^3_c\big(\,\overline{\widehat\Omega}\,\big) 
        \cap W^{3,p}_I(\widehat\Omega)$}.
    \end{align}
    Once this is achieved, we use the changes of variables $x\mapsto \hat{x} = A^{-1}x$ and $y\mapsto \hat{y} = A^{-1}y$ to derive the identity 
	\begin{align}
		\label{TRAFO:2}
		\begin{split}
			&\|\mathcal{L}^{\Omega}_\eps u - \LL^{\Omega} u\|_{L^p(\Omega)}^p 
			\\
			&= \int_{A\widehat\Omega}\left| 
			\,\mathrm{P.V.} \int_{A\widehat\Omega}J_\eps(x-y)\big(u(x)-u(y)\big)\dy 
			+ \Div\big(A\nabla u(x)\big) \right|^p\dx 
			\\
			&= \det A\, \int_{\widehat\Omega}\left|
			\,\mathrm{P.V.} \int_{\widehat\Omega}
			J_\eps\big(A(\hat{x}-\hat{y})\big)\,\det A \,\big(w(\hat{x})-w(\hat{y})\big)\dhy 
			+ \Delta w(\hat{x})\right|^p
			\dhx 
			\\[1ex]
			&= \det A\, \|\widehat{\mathcal{L}}^{\widehat\Omega}_\eps w + \Delta w\|_{L^p(\widehat\Omega)}^p.
		\end{split}
	\end{align}
    In particular, using \eqref{KEY:1} and \eqref{EST:DOM:WU}, we infer that
    \allowdisplaybreaks
    \begin{align*}
        \|\mathcal{L}^{\Omega}_\eps u \|_{L^p(\Omega)}
        &\le \|\mathcal{L}^{\Omega}_\eps u - \LL^{\Omega} u\|_{L^p(\Omega)}
            + \|\LL^{\Omega} u\|_{L^p(\Omega)}
        \\
        &\le \|\widehat{\mathcal{L}}^{\widehat\Omega}_\eps w 
            + \Delta w\|_{L^p(\widehat\Omega)}
            + \|\Delta w\|_{L^p(\widehat\Omega)}
        \\
        &\le \|\widehat{\mathcal{L}}^{\widehat\Omega}_\eps w 
            \|_{L^p(\widehat\Omega)}
            + 2\|\Delta w\|_{L^p(\widehat\Omega)}
        \\
        &\le C \| w \|_{W^{2,p}(\widehat\Omega)}
        \le C \| u \|_{W^{2,p}(\Omega)}.
    \end{align*}
    \allowdisplaybreaks[0]
    Consequently, $\mathcal{L}^{\Omega}_\eps$ can be extended to a bounded linear operator 
    \begin{equation*}
        \LL_\eps^\Omega:  W^{2,p}_M(\Omega)
            \to L^p(\Omega)\,,
    \end{equation*}
    which fulfills \eqref{BND:LOM:DOM}.
    Now, we take an arbitrary $u\in C^3_c(\overline\Omega)\cap W^{3,p}(\Omega)$. This means that
    \begin{equation}
        \label{EST:DOM:WU*}
        w \in C^3_c\big(\,\overline{\widehat\Omega}\,\big) 
        \cap W^{3,p}(\widehat\Omega)
        \quad\text{with}\quad
        \|w\|_{W^{3,p}(\widehat\Omega)}
        \le C \|u\|_{W^{3,p}(\Omega)} \,.
    \end{equation}
    Combining \eqref{KEY:2}, \eqref{TRAFO:2} and \eqref{EST:DOM:WU}, we further have
    \allowdisplaybreaks
    \begin{align*}
        &\|\mathcal{L}^{\Omega}_\eps u 
            - \LL^{\Omega} u\|_{L^p(\Omega)}
        = \|\widehat{\mathcal{L}}^{\widehat\Omega}_\eps w 
            + \Delta w\|_{L^p(\widehat\Omega)}^p
        \\
        &\quad\le C \sqrt[p]{\eps} \| w \|_{W^{3,p}(\widehat\Omega)}
        \le C \sqrt[p]{\eps} \| u \|_{W^{3,p}(\Omega)}.
    \end{align*}
    \allowdisplaybreaks[0]
    Because of density, it follows that \eqref{CONV:DOM} holds. 
    Proceeding as in Step~3 of the proof of Theorem~\ref{THM:CONV:HS}, we apply the Banach--Steinhaus theorem to conclude that \eqref{CONV:DOM:2} is fulfilled.
    This means that all the claims are established.
		
    Therefore, in order to complete the proof, it remains to establish the key estimates \eqref{KEY:1} and \eqref{KEY:2}.
    
	\medskip
	
	\noindent\textbf{Step~2. Localization of the reference domain.}\\
    In order to verify the key estimates \eqref{KEY:1} and \eqref{KEY:2}, we need to localize the reference domain $\widehat\Omega$.
    For this purpose, we need to construct a suitable partition of unity.

    We start by fixing an arbitrary $z \in \partial\widehat\Omega$. Then, since $\partial\widehat\Omega$ is of class $C^3$, there exists an open set $U_z \subseteq \R^n$, a function $\gamma_z\in C^3_b(\R^{n-1})$ and a matrix $Q_z\in \mathrm{SO}(n)$ such that
    \begin{align*}
        \widehat\Omega \cap U_z
        = Q_z\R^n_{\gamma_z} \cap U_z.
    \end{align*}
    Since $\gamma_z\in C^{3}_b(\R^{n-1}) $, we infer from \cite[Lemma 2.1]{Schumacher2009} that there exists a $C^{2,1}$-diffeomorphism $F_z:\R^n \to \R^n$ with $F_{\gamma_z}(\R^n_+) = \R^n_{\gamma_z}$, $F_z(x^\prime,0) = \big(x^\prime,\gamma(x^\prime) \big)$ and 
	\begin{equation}
		\label{COND:FZ}
        \big(DF_z(x',0)\big)^{-1} 
            \,\n_{\partial\R^n_{\gamma_z}}\big(F_z(x',0)\big)
        \in \mathrm{span}\{e_n\}
	\end{equation}
	for all $x'\in \R^{n-1}$. For any $\delta>0$, we now introduce the functions $\phi_z\in C^\infty_c(\R^{n-1};[0,1])$ and $\psi_z\in C^\infty_c(\R;[0,1])$ with
    \begin{alignat*}{2}
        &\phi_z = 1 \;\;\text{on $B_{\delta/2}(z')$},
        \qquad
        &&\supp \phi_z \subset B_\delta(z') \subset \R^{n-1},
        \\
        &\psi_z = 1 \;\;\text{on $\big(-\tfrac{\delta}{2},\tfrac{\delta}{2}\big)$},
        \qquad
        &&\supp \psi_z \subset (-\delta,\delta) \,.
    \end{alignat*}
    Next, we define
    \begin{align*}
        \widehat\varphi_z: \R^n \to \R,
        \quad
        \widehat\varphi_z(y) = \phi_z(y') \psi_z(y_n)
        \quad\text{and}\quad
        \tilde\varphi_z \coloneqq \widehat\varphi_z \circ \big( Q_z F_{z} \big)^{-1}\,.
    \end{align*}
    Since $(Q_zF_z)^{-1}(U) \subset \R^n$ is open, we can ensure that
    \begin{align*}
        \supp\widehat\varphi_z \subset (Q_zF_z)^{-1}(U_z)
        \quad\Leftrightarrow\quad
        \supp\tilde\varphi_z \subset U_z
    \end{align*}
    by choosing $\delta_z$ sufficiently small. Moreover, as $F_z^{-1}\in C^{2,1}(\R^n)$, we infer that
    \begin{align*}
        \tilde\varphi_z \in C^2_c(\R^n;[0,1]) \cap W^{3,\infty}(\R^n).
    \end{align*}
    Let now $y\in \partial\widehat\Omega \cap U_z = \partial\big(Q_z\R^n_{\gamma_z}\big) \cap U_z$ be arbitrary. Hence, there exists $x' \in \R^{n-1}$ such that $Q_zF_z(x',0) = y$.    
    By the construction of $F_z$, we have
    \begin{align}
        \label{ID:DQFZ}
        D\big((Q_zF_z)^{-1}\big)(y)
        = \big(D(Q_zF_z)(x',0)\big)^{-1}
        = \big(DF_z(x',0)\big)^{-1} Q_z^T
    \end{align}
    Moreover, we have
    \begin{align}
        \label{ID:QN}
        Q_z^T \, \n_{\partial(Q_z \R^n_{\gamma_z})}(y)
        = \n_{\partial(\R^n_{\gamma_z})}\big(Q_z^T y\big)
        = \n_{\partial\R^n_{\gamma_z}}\big(F_z(x',0)\big)
    \end{align}
    Now, using the chain rule as well as \eqref{ID:DQFZ} and \eqref{ID:QN}, we derive the identity
    \begin{align}
        \label{ID:DPZ}
        \begin{split}
        &\nabla \tilde\varphi_z(y) \cdot \n_{\partial\widehat\Omega}(y)
        = D\tilde\varphi_z(y) \, \n_{\partial\widehat\Omega}(y)
        = D\tilde\varphi_z(y) \, \n_{\partial(Q_z \R^n_{\gamma_z})}(y)
        \\
        &\quad = D\widehat\varphi_z\big( (Q_zF_z)^{-1}(y) \big)
            \, \big(DF_z(x',0)\big)^{-1} Q_z^T 
            \,\n_{\partial(Q_z \R^n_{\gamma_z})}(y)
        \\
        &\quad = D\widehat\varphi_z\big( (Q_zF_z)^{-1}(y) \big)
            \, \big(DF_z(x',0)\big)^{-1} 
            \,\n_{\partial\R^n_{\gamma_z}}\big(F_z(x',0)\big)
        \end{split}
    \end{align}
    By the construction of $\widehat\varphi_z$, we have
    \begin{align}
        \label{EQ:DPZ}
        D\widehat\varphi_z(y) = 
        \left(\begin{array}{ccc|c}
		& & & 0 \\
		& D\phi_z(y') & & \vdots \\
		& & & 0 \\ \hline
		0 & \ldots & 0 & \psi_z'(y_n)
		\end{array}\right)
        \quad\text{for all $y\in\R^n$}
    \end{align}
    Consequently, since $\psi_z'(0) = 0$, it holds that
    \begin{align*}
        D\widehat\varphi_z\big( (Q_zF_z)^{-1}(y) \big) e_n 
        = D\widehat\varphi_z( x',0 ) e_n
        = 0
    \end{align*}
    Thus, in view of \eqref{COND:FZ}, we conclude from \eqref{ID:DPZ} that
    \begin{align*}
        \nabla \tilde\varphi_z(y) \cdot \n_{\partial\widehat\Omega}(y)
        = 0
    \end{align*}
    for all $y\in \partial\widehat\Omega \cap U_z$.

    By the above construction, it is clear that the sets $\supp\tilde\varphi_z$, $z\in \partial\widehat\Omega$, are an open cover of $\partial\widehat\Omega$. Hence, since $\partial\widehat\Omega$ is compact, we can select $z_1,\ldots,z_N$ such that 
    \begin{align*}
        \partial\widehat\Omega 
        \subset \bigcup_{j=1}^N \{\tilde\varphi_{z_j} > 0 \}
        \subset \bigcup_{j=1}^N U_{z_j}.
    \end{align*}
    Therefore, in the following, we simply write
    \begin{align*}
        U_j \coloneqq U_{z_j}, \quad
        Q_j \coloneqq Q_{z_j}, \quad
        \gamma_j \coloneqq \gamma_{z_j}, \quad
        \tilde\varphi_j \coloneqq \tilde\varphi_{z_j}
    \end{align*}
    for all $j\in\{1,\ldots,N\}$. We further find an open set $U_0 \subset \R^n$ with $\overline{U_0} \subset \widehat\Omega$ and a function $\tilde\varphi_0\in C^\infty_c(\R^n;[0,1])$ with $\supp \tilde\varphi_0 \subset U_0$ such that
    \begin{align*}
        \overline{\widehat\Omega} 
        \subset \bigcup_{j=0}^N \{\tilde\varphi_j > 0 \}
        \subset \bigcup_{j=0}^N U_{j}.
    \end{align*}
    To simplify the notation, we further set $Q_0 \coloneqq I$ and $\R^n_{\gamma_0} = \R^n$.
    Finally, we define 
    \begin{align*}
        \varphi_j:\R^n\to\R^n,
        \quad
        \varphi_j(x) = 
        \begin{cases}
            \dfrac{\tilde\varphi_j(x)}{S(x)} &\text{if $S(x)>0$}\,,\\
            0 &\text{if $S(x)=0$}
        \end{cases}        
    \end{align*}
    for all $j\in\{0,\ldots,N\}$, where
    \begin{align*}
        S(x) = \sum_{j=0}^N \tilde\varphi_j(x)
        \quad\text{for all $x\in \R^n$.}
    \end{align*} 
    This ensures that 
    \begin{align*}
        \sum_{j=0}^N \varphi_j = 1 
        \quad\text{on $\overline{\widehat\Omega}$.}
    \end{align*}
    
    By the above construction, the family
    \begin{align*}
        \{\varphi_j\}_{j=0,...,N} \subset C^2_c(\R^n;[0,1])\cap W^{3,\infty}(\R^n)
    \end{align*}    
    is a partition of unity of $\widehat\Omega$, which satisfies
    \begin{align}
        \label{COND:POU}
        \widehat\Omega \cap U_j = Q_j\R^n_{\gamma_j} \cap U_j,
        \quad
        \supp \varphi_j \subset U_j,
        \quad
        \nabla\varphi_j \cdot \mathbf{n}_{\partial\widehat\Omega} = 0 
        \;\;\text{on $\partial\widehat\Omega$}
    \end{align}
    for all $j\in\{0,\ldots,N\}$.

    \medskip
    \noindent
    \textbf{Step~3. Proof of the key estimates \eqref{KEY:1} and \eqref{KEY:2}.}\\
    Let $v\in  C^2_c\big(\,\overline{\widehat\Omega}\,\big) 
    \cap W^{2,p}_I(\widehat\Omega)$ be arbitrary.
    For every $j\in\{0,\ldots,N\}$, we further choose a function
    \begin{equation*}
        \psi_j\in C^\infty_0(\R^n;[0,1])
        \quad\text{with}\quad
        \text{supp }\psi_j\subseteq U_j
        \quad\text{and}\quad
        \psi_j = 1 \;\;\text{on $\text{supp }\varphi_j$}.
    \end{equation*}
    Now, we set $v_j \coloneqq v\varphi_j$ for $j=0,\ldots,N$.
    Using the product rule along with \eqref{COND:POU}, we find that  
    \begin{align*}
        v_j &\in  C^2_c\big(\overline{Q_j\R^n_{\gamma_j}}\big) 
        \cap W^{2,p}_I(Q_j\R^n_{\gamma_j})
        \quad\text{for all $j\in\{0,\ldots,N\}$.}
    \end{align*}
    Recalling that $\widehat{\LL}_\eps^{\widehat\Omega}$ is linear, we
    use the decomposition $v = \sum_{j=0}^N v_j$, to obtain
    \begin{align}
        \label{DEC:LLE:1}
        \widehat{\LL}_\eps^{\widehat\Omega} v
        &= \sum_{j=0}^N \widehat{\LL}_\eps^{\widehat\Omega}v_j
        = \sum_{j=0}^N \Big( J^\eps_{j,1}
            + J^\eps_{j,2} \Big).
    \end{align}
    with
    \begin{align}
        \label{DEF:JE:12}
        J^\eps_{j,1} 
        \coloneqq \psi_j \,\widehat{\LL}_\eps^{\widehat\Omega}v_j,
        \qquad
        J^\eps_{j,2} 
        \coloneqq (1-\psi_j)\, \widehat{\LL}_\eps^{\widehat\Omega}v_j
    \end{align}
    for $j=0,\ldots,N$. We will now estimate these summands separately.

    \noindent\textit{Ad $J^\eps_{j,1}$:}
    Let $j\in\{0,\ldots,N\}$ be arbitrary.
    We obtain
    \begin{align*}
    J^\eps_{j,1}(x) 
    &= \psi_j(x)\int_{{\widehat\Omega}}\widehat{J}_\varepsilon(x-y)\big(v_j(x)- v_j(y)\big)\dy
    \\
    &= J^\eps_{j,1,1} + J^\eps_{j,1,2} - J^\eps_{j,1,3}
    \end{align*}
    with 
    \allowdisplaybreaks
    \begin{align*}
        J^\eps_{j,1,1} (x) 
        &\coloneqq 
        \psi_j(x)\int_{Q_j\R^n_{\gamma_j}}
        \widehat{J}_\varepsilon(x-y)\,\big(v_j(x)- v_j(y)\big)\dy,
        \\
        J^\eps_{j,1,2} (x)
        &\coloneqq
        \psi_j(x)\int_{{\widehat\Omega}\setminus U_j}\widehat{J}_\varepsilon(x-y)\,v_j(x)\dy,
        \\
        J^\eps_{j,1,3} (x)
        &\coloneqq 
        \psi_j(x)\int_{Q_j\R^n_{\gamma_j}\setminus U_j}\widehat{J}_\varepsilon(x-y)\,v_j(x)\dy
    \end{align*}
    \allowdisplaybreaks[0]
    for all $x\in\widehat\Omega$. 
    Since $\widehat{\rho}$ and $\widehat{J}$ satisfy \ref{ASS:RHO} and \ref{ASS:J}, 
    Theorem~\ref{THM:CONV:R^N} (for $j=0$) and
    Corollary~\ref{COR:CONV:HS} (for $j>0$) imply that
    \begin{equation}
        \label{EST:JJ11:W2}
        \| J^\eps_{j,1,1} \|_{L^p(\widehat\Omega)}
        \le C \| v_j \|_{W^{2,p}(\widehat\Omega)}
        \le C \| v \|_{W^{2,p}(\widehat\Omega)}.
    \end{equation}
    We further recall that $\supp v_j \subseteq \supp\varphi_j \subset U_j$. Thus, for $x\in \supp\varphi_j$ and $y\in \R^n\setminus U_j$, it holds that
    \begin{equation*}
        |x-y| \ge \delta_j
        \quad\text{with}\quad
        \delta_j \coloneqq \dist(\supp \varphi_j,\R^n\setminus U_j).
    \end{equation*}
    Moreover, invoking the definition of $\widehat\rho$ (see~\eqref{DEF:HRHO}) and Lemma~\ref{LEM:DIRAC}\ref{LEM:DIRAC:B}, a straightforward computation yields
    \begin{align*}
        | J^\eps_{j,1,2}(x) |
        &\le \delta_j^{-3}\, |v_j(x)|\,  \int_{\R^n} \widehat\rho_\eps(x-y) \, |x-y| \dy
        \\
        &= \eps\,\delta_j^{-3}\, |v_j(x)|\,  \int_{\R^n} \widehat\rho(x-y) \, |x-y| \dy
        \\
        &\le C\eps\,\delta_j^{-3}\, |v_j(x)|\,. 
    \end{align*}
    Consequently, we obtain
    \begin{align}
        \label{EST:JJ12:W2}
        \| J^\eps_{j,1,2} \|_{L^p(\widehat\Omega)}
        \le C\eps \| v_j \|_{L^{p}(\widehat\Omega)}
        \le C\eps \| v \|_{L^{p}(\widehat\Omega)}.
    \end{align}
    The term $J^\eps_{j,1,3}$ can be estimated completely analogously and we end up with
    \begin{align}
        \label{EST:JJ13:W2}
        \| J^\eps_{j,1,3} \|_{L^p(\widehat\Omega)}
        \le C\eps \| v \|_{L^{p}(\widehat\Omega)}.
    \end{align}
    Combining \eqref{EST:JJ1:W2} and \eqref{EST:JJ2:W2} and recalling that $\eps\in(0,1]$, we conclude that
    \begin{align}
        \label{EST:JJ1:W2}
        \| J^\eps_{j,1} \|_{L^p(\widehat\Omega)}
        \le C \| v \|_{W^{2,p}(\widehat\Omega)}.
    \end{align}
    for all $j\in\{0,\ldots,N\}$.

    \medskip\pagebreak[4]

    \noindent\textit{Ad $J^\eps_{j,2}$:}
    Let $j\in\{0,\ldots,N\}$ be arbitrary.
    We first notice that $1-\psi_j(x) = 0$ if $x\in \supp \varphi_j$. If $x\in \widehat\Omega\setminus \supp \psi_j$ and $y\in \supp \varphi_j$, we have
    \begin{equation*}
        |x-y| \ge \tilde\delta_j
        \quad\text{with}\quad
        \tilde\delta_j \coloneqq \dist(\R^n\setminus \supp\psi_j,\supp \varphi_j).
    \end{equation*}
    Hence, for all $x\in\widehat\Omega$, it holds that
    
    \begin{align*}
        |J^\eps_{j,2}(x)|
        \le \big(1 - \psi_j(x) \big) 
        \tilde\delta_j^{-3}\, \,  \int_{\widehat\Omega} \widehat\rho_\eps(x-y) \, |x-y| |v_j(y)|\dy\,.
    \end{align*}
    This directly implies 
    \begin{align}
        \label{EST:JJ2:W2}
        \| J^\eps_{j,2} \|_{L^p(\widehat\Omega)}^p 
        &\le C\tilde\delta_j^{-3p}\int_{\R^n}\Bigg(\int_{\widehat\Omega}
        \widehat\rho_\eps(x-y) \, |x-y| \dy\Bigg)^{p-1}
        \nonumber\\
        &\qquad\qquad\qquad
        \cdot
        \Bigg(\int_{\widehat\Omega}\widehat\rho_\eps(x-y) \, |x-y| |v_j(y)|^p\dy\Bigg)\dx 
        \nonumber\\
        &\quad \le C\tilde\delta_j^{-3p}\eps^{p-1}\int_{\widehat\Omega}\left(\int_{\R^n}\widehat\rho_\eps(x-y) \, |x-y| \dx\right)|v_j(y)|^p\dy 
        \nonumber\\
        &\quad \le C\eps^p \| v_j \|_{L^{p}(\widehat\Omega)}^p
        \le C\eps^p \| v \|_{L^{p}(\widehat\Omega)}^p.
    \end{align}
    \medskip

    In summary, combining \eqref{EST:JJ1:W2} and \eqref{EST:JJ2:W2} to bound the right-hand side of \eqref{DEC:LLE:1}, we infer the estimate
    \begin{align*}
        \| \widehat{\LL}_\eps^{\widehat\Omega}v_j \|_{L^p(\widehat\Omega)}
        &\le C\| v \|_{W^{2,p}(\widehat\Omega)}
    \end{align*}
    for all $v\in  C^2_c\big(\,\overline{\widehat\Omega}\,\big) 
    \cap W^{2,p}_I(\widehat\Omega)$. This means that key estimate \eqref{KEY:1} is established.

    In order to verify key estimate \eqref{KEY:2}, let now $v\in  C^3_c\big(\,\overline{\widehat\Omega}\,\big) 
    \cap W^{3,p}_I(\widehat\Omega)$ be arbitrary.
    Using the decomposition \eqref{DEC:LLE:1} along with the linearity of the Laplacian, we derive the identity
    \begin{align}
        \label{DEC:LLE:2}
        \widehat{\LL}_\eps^{\widehat\Omega} v + \Delta v
        &= \sum_{j=0}^N \Big ( \big(J^\eps_{j,1,1}  + \Delta v_j\big)        
            + J^\eps_{j,1,2}
            + J^\eps_{j,1,3} 
            + J^\eps_{j,2} \Big)\,,
    \end{align}
    where $J^\eps_{j,1,1}$, $J^\eps_{j,1,2}$, $J^\eps_{j,1,3}$ and $J^\eps_{j,2}$, $j=0,\ldots,N$, are defined as above.
    Since $\psi_j = 1$ on $\supp v_j$, we have
    \begin{align*}
        J^\eps_{j,1,1}  + \Delta v_j
        = \psi_j(x) \left( \int_{Q_j\R^n_{\gamma_j}}
        \widehat{J}_\varepsilon(x-y)\,\big(v_j(x)- v_j(y)\big)\dy
        + \Delta v_j \right).
    \end{align*}
    Hence, applying Theorem~\ref{THM:CONV:R^N} (for $j=0$) and
    Corollary~\ref{COR:CONV:HS} (for $j>0$), we deduce
    \begin{align}
        \label{EST:JJ11:W3}
        \| J^\eps_{j,1,1}  + \Delta v_j \|_{L^p(\widehat\Omega)}
        \le C\sqrt[p]{\eps}\, \| v_j \|_{W^{3,p}(\widehat\Omega)}
        \le C\sqrt[p]{\eps}\, \| v \|_{W^{3,p}(\widehat\Omega)}.
    \end{align}
    In combination with \eqref{EST:JJ12:W2}, \eqref{EST:JJ12:W2} and \eqref{EST:JJ2:W2}, we conclude from \eqref{DEC:LLE:2} that
    \begin{align*}
        \| \widehat{\LL}_\eps^{\widehat\Omega} v + \Delta v \|_{L^p(\widehat\Omega)}
        \le C\sqrt[p]{\eps}\, \| v \|_{W^{3,p}(\widehat\Omega)}.
    \end{align*}
    This means that key estimate \eqref{KEY:2} is established and thus, the proof of Theorem~\ref{THM:CONV:DOM} is complete.
\end{proof}


\footnotesize

\bibliographystyle{abbrv}
\bibliography{bibliography}

\end{document}